\numberwithin{equation}{section}
\theoremstyle{plain}
\newtheorem{lem}{Lemma}[section]
\newtheorem{thm}[lem]{Theorem}
\newtheorem{prop}{Proposition}[section]
\theoremstyle{definition}
\newtheorem{defn}{Definition}[section]
\newtheorem{rem}{Remark}[section]
\newcommand{\p}{\partial}
\newcommand{\ds}{\displaystyle}
\newcommand{\vp}{\phi}
\newcommand{\RR}{ \mathbb{R}}
\newcommand{\pd}[2]{\frac {\p #1}{\p #2}}
\newcommand{\nm}{\noalign{\smallskip}}
\newcommand{\Scal}{\mathcal{S}}
\newcommand{\Ocal}{\mathcal{O}}
\newcommand{\Kcal}{\mathcal{K}}
\newcommand{\Dcal}{\mathcal{D}}
\renewcommand{\(}{\left(}
\renewcommand{\)}{\right)}
\def \e{\ensuremath{\mathrm{e}}}
\begin{document}

\title{Spectral theory of the Neumann-Poincar\'e operator on multi-layered structures and analysis of plasmon mode splitting}
%\title{Spectral theory of the Neumann-Poincar\'e operator in multi-layer structures with applications to plasmon hybridization}
\author{Youjun Deng \thanks{Guangxi Key Laboratory of Universities Optimization Control and Engineering Calculation, Guangxi Minzu University, Nanning, Guangxi 530006, China. School of Mathematics and Statistics, Central South University, Changsha, 410083, Hunan Province, China. \ \ Email: youjundeng@csu.edu.cn; dengyijun\_001@163.com}
\and{Lingzheng Kong}\thanks{%Corresponding author. 
	School of Mathematics and Statistics, Central South University, Changsha, 410083, Hunan
		Province, China. Email: math\_klz@csu.edu.cn; math\_klz@163.com}
\and
{Zijia Peng}
\thanks{Guangxi Key Laboratory of Universities Optimization Control and Engineering Calculation, and School of Mathematical Sciences, Guangxi Minzu University, Nanning, Guangxi 530006, China. E-mail: pengzijia@126.com}
%\and
%{Zaiyun Zhang}
%\thanks{School of Mathematics, Hunan Institute of Science and Technology, Yueyang 414006, Hunan, China. Email: zhangzaiyun1226@126.com}
\and
{Liyan Zhu}
\thanks{School of Mathematics and Statistics, Central South University, Changsha, 410083, Hunan Province, China.\ \ Email: math\_zly@csu.edu.cn}
}

\date{}%
\maketitle
% ----------------------------------------------------------------
\begin{abstract}
In this paper, we develop a general mathematical framework for analyzing electostatics within multi-layered metamaterial structures. The multi-layered structure can be designed by nesting complementary negative and regular materials together, and it can be easily  achieved  by truncating bulk metallic material in a specific configuration. Using layer potentials and symmetrization techniques, we establish the perturbation formula in terms of Neumann-Poincar\'e (NP) operator for general multi-layered medium, and obtain the spectral properties of the NP operator, which demonstrates that the number of plasmon modes increases with the number of layers. Based on Fourier series, we present an exact  matrix representation of the NP operator in an apparently unsymmetrical structure,  exemplified by multi-layered confocal ellipses. By highly intricate and delicate analysis, we establish a handy algebraic framework for studying the splitting of the plasmon modes within multi-layered structures. Moreover, the asymptotic profiles of the plasmon modes are also obtained. This framework helps reveal the effects of material truncation and rotational symmetry breaking  on the splitting of the plasmon modes, thereby inducing desired resonances and enabling  the realization of customized applications.
\end{abstract}

{\bf Key words}:  Neumann-Poincar\'e operator;  multi-layered structures; plasmon mode splitting; characteristic polynomial

{\bf 2020 Mathematics Subject Classification:}~~ 35J05, 35P15,  35B30
%	35J05  	Laplace operator, Helmholtz equation (reduced wave equation), Poisson equation
%    35B30 	Dependence of solutions to PDEs on initial and/or boundary data and/or on parameters of PDEs

% ----------------------------------------------------------------
\section{Introduction}

We are concerned with the mathematical investigation of electostatics within multi-layered structure and the peculiar plasmon resonance phenomena it induces. Plasmon resonance refers to the resonant oscillation of conduction electrons at the interface between negative and positive permittivity material caused by the background field. Negative materials can be created using high-contrast particles compared to the surrounding medium through homogenization theory in specific configurations \cite{CGSJLMS23,AmmariJMA17}. Another commonly utilized category of negative materials includes noble metallic materials \cite{KPLM_PRB2014,Maier07,YA_PNASU18}. Negative materials have been proposed for groundbreaking applications such as super-resolution imaging and invisibility cloaking. Multi-layered structures are frequently employed as the foundational components in constructing metamaterial devices for these cutting-edge applications \cite{PRHN2003SCI,YA_SIAMREV18}. It is important to note that the plasmon resonances induced by these negative structures serve as the fundamental basis for these applications.

In  \cite{Ammari2016,AMRZARMA17}, the authors studied the surface plasmon resonance of metallic nanoparticles in electromagnetic wave using the Drude model, with applications in wave imaging. These nanoparticles can be viewed as single-layer structures. In \cite{MGWNNAP_PRSA06,Ammari2013,ACKLM2,BS11}, the core-shell structures were proposed for achieving invisibility cloaking through the phenomenon known as anomalous localized resonance. In this configuration, the core contains a regular material while the shell contains a negative material, both being uniform. By precisely aligning their material properties according to specific geometric arrangements, cloaking due to anomalous localized resonance can be induced by some appropriate incident field. The core-shell device can be regarded as a two-layer structure. The aforementioned studies have also been extended in \cite{LLLWESAIMM2AN19,LLPRSA18} for macro-scale scenarios,  in \cite{PMSOPRSA19,RMSOPRB22,DLZJMPA21,DLZJDE22} for  anisotropic structures (nanorod, slender-body), and in \cite{Chung2014} for two-layer confocal ellipses. Similarly, one-layer or two-layer metamaterial structures have also been extensively used and theoretically investigated in linear elasticity, see e.g. \cite{DLL201,LL16,LLL16,AKKY17,AKKY18,DLbook2024} and the references cited therein. Multiband operation and cloaking of larger objects have been achieved using multi-layered plasmonic covers, providing more degrees of freedom to suppress a greater number of scattering orders \cite{Alu2008,Chen2012}. Consequently, three-layer or four-layer structures, known as nanomatryushkas  (a matryushka is a nested set of
dolls found in Russian folk art), have been widely employed  and theoretically investigated using the hybridization model \cite{PRHN2003SCI,FDLMMA15,PNJCP2004,KPN_NA13} and the references cited therein. Notably, the plasmon resonance of multi-layered concentric radial structures has been investigated in \cite{FangdengMMA23,KZDF}, where plasmon modes interact and hybridize, resulting in localized surface plasmons excited at each surface. This phenomenon explains the physical reason why the number of plasmon modes splitting (PMS) corresponds to the number of layers in the structure. This would enable an even greater degree of versatility in designing the cloak, accommodating multiple operating frequencies and potentially wider frequency bandwidths.

Despite the important work mentioned above, there are still many outstanding questions regarding PMS within multi-layered structures waiting to be answered. One such question deals with PMS on multi-layered structures  of general shape: most of the aforementioned works focus on radial structures. As the first step to addressing this challenging question, we consider in this paper the structure of multi-layered confocal ellipses. The number of layers can be arbitrary and the material parameters in each layer may be different, though uniform.
This can encompass many of the existing studies mentioned above, where the number of layers is one or two. However, these results cannot be easily generalized to arbitrary multi-layered structures, which poses a significant challenge.
To the authors' best knowledge, the physical reason for the PMS within multi-layered confocal ellipses has not been adequately discussed in the literature and neither has the equivalence of the multi-layered  circular and elliptical structures.  We prove that like the radial case  there is an exact perturbed field in terms of the \textit{generalized polarization matrix} (GPM)  which contains the material information, together with the geometric information of multi-layered structures.  The explicit formula for the determinant of the GPM, which is equivalent to the characteristic polynomial of matrix type Neumann-Poincar\'e (NP) operator in multi-layered structures, is a stumbling block in the analysis of PMS.

To prove the main result of this paper, we first derive the matrix type NP operator $\mathbb{K}_N^*$ (see \eqref{NPoperator}) in the $N$-layer structure with $C^{1,\eta}\, (0<\eta<1)$ smooth interface using layer potentials. Secondly,  by delicate analysis, we develop a spectral theory (see Proposition \ref{NPspectral}) of NP operator, especially the Calder\'on-type identity in multi-layered structures, which serves as a highly nontrivial extension of the corresponding study about two-layer structures  in \cite{Ammari2013}.
Based on computations of the NP operator on single-layer ellipses in \cite{AKLJCM07}, we obtain two exact matrix representation of $\mathbb{K}_N^*$: even NP matrix $\(\mathbf{K}_{N,c}^{(n)}\)^T$ and odd NP matrix $\(\mathbf{K}_{N,s}^{(n)}\)^T$ (see Corollary \ref{cor51}), which results from a breaking of rotational symmetry; however the modes specifically split into even and odd parities because the multi-layered confocal ellipses still possesse two mirror planes. After struggling against complicated structure of the matrics $\(\mathbf{K}_{N,c}^{(n)}\)^T$ and $\(\mathbf{K}_{N,s}^{(n)}\)^T$, we come up with a conjecture of the exact formulation of characteristic polynomial. The conjecture is then verified by backward induction.
By using the spectral theory of NP operator, we conclude that the even NP matrix $\(\mathbf{K}_{N,c}^{(n)}\)^T$ and odd NP matrix $\(\mathbf{K}_{N,s}^{(n)}\)^T$ have $N$ real eigenvalues that lie in the interval $[-1/2,1/2]$, respectively.  Each eigenvalue is associated with one type of plasmon modes, which means that truncating the material will result in plasmon mode splitting.
Furthermore, the asymptotic profiles of the plasmon modes in multi-layered confocal ellipses are obtained as the elliptic radii approach infinity (i.e., multi-layered concentric discs) or zero (i.e., multi-layered thin strips).
This analysis establishes an algebraic framework that offers a convenient and practical approach for studying plasmon resonances associated with multi-layered metamaterial structures. In practical applications, the multi-layered structure can serve as a fundamental building block for various material devices. Our algebraic framework will provide a powerful and general design principle that can be applied to select appropriate material parameters, both qualitatively and quantitatively, guiding the design of metallic nanostructures and predicting their resonant properties. We shall investigate along this
direction in our forthcoming work.

The remainder of this paper is organized as follows.
In Section \ref{sec2}, we first present the electrostatic scattering problem with multi-layered structures. Second, we transform the problem into an integral system using layer potentials and derive the form of the NP operator in multi-layered structures.
In Section \ref{sec3}, we develop a spectral theory for NP operator.
Section \ref{sec4} presents the representation of the perturbed field in terms of the GPM.
In Section \ref{sec5}, we establish an algebraic framework for plasmon resonances  and give the main results on the explicit formula of the characteristic polynomial and the estimation of the roots.
Section \ref{sec6} is devoted to the proofs of the main results in Section \ref{sec5}.
In Section \ref{sec7}, numerical examples are presented  to corroborate our theoretical results. 
Some conclusions are made in Section \ref{sec8}.

\section{Electrostatics with multi-layered structures}\label{sec2}
\subsection{Multi-layered structures}
In this subsection, we first introduce the material parameters and geometric configurations for subsequent analysis.
The definition below encapsulates the concept of general multi-layered structures.
\begin{defn}\label{def21}
	Let $A$ be the inclusion with a $C^{1,\eta}\, (0<\eta<1)$ smooth boundary $\Gamma_1:=\p A$ and  $A_0=\RR^d\setminus\overline{A}$.
	We say that $A $  is considered to have a partition made up of a multi-layered structure, if the interior of $A$ be divided by means of closed and nonintersecting  $C^{1,\eta}$ surfaces $\Gamma_k$ $(k = 2, 3,...,N)$  into subsets (layers)  $A_k$ $(k = 1, 2,...,N)$.  Each $\Gamma_{k-1}$ surrounds $\Gamma_k$  $(k=2,3,\ldots,N)$.
	The region $A_k$ $(k=0,1,2,\ldots,N)$ stands for homogeneous media.
\end{defn}
For the $N$-layer structure $A$  as per Definition \ref{def21}, the material parameters are characterized by conductivity
\begin{equation}\label{conductvtA&AC}
\sigma(x)=\sigma_c(x)\chi(A)+\sigma_0\chi(\mathbb{R}^d\backslash \overline{A}),
\end{equation}
where $\chi$ stands for the characteristic function of a domain; $\sigma_0$ is set to be a positive constant and hence
it defines a regular material in the  background space $\mathbb{R}^d\backslash \overline{A}$. We set $\sigma_c$ to be of the following form
\begin{equation}\label{conductvt_each_layer}
\sigma_c(x)=\sigma_j, \quad x\in A_j,\quad j=1,2,\ldots,N.
\end{equation}
where
\[
\sigma_j\in\mathbb{C}\ \ \mbox{with}\ \ \Im\sigma_j\geqslant 0,\quad j=1,2,\ldots,N.
\]
That is, $\sigma_c$ is of a layered piecewise-constant structure. If $\Re\sigma_j>0$, then it defines a regular material in $A_j$ with  $\Im\sigma_j$ signifying the lossy/dampling parameters of the material. If $\Re\sigma_j\leqslant 0$, then it defines a metamaterial in $A_j$.  In this paper, we simply refer to $\sigma_j$ with $\Re\sigma_j\leqslant 0$ as a negative  material in $A_j$.

In summary, we consider a rather general multi-layered structure in which the number of layers can be arbitrarily given and each layer can consist of either regular or negative materials. Moreover, the material parameters in each layer can be different from one another.

\subsection{Electrostatics scattering and integral reformulation}
Associated with the $N$-layer structure $A$, which configuration is described by \eqref{conductvtA&AC}--\eqref{conductvt_each_layer}, the electrostatics scattering is governed by the following conductivity transmission problem:
\begin{equation}\label{cductvt_transmi_prblm}
\left\{
\begin{array}{ll}
\nabla\cdot (\sigma(x)\nabla u) =0, & \mbox{in} \quad \RR^d,\\
u-H=\Ocal(|x|^{1-d}), & \mbox{as}\quad |x|\rightarrow \infty,
\end{array}
\right.
\end{equation}
where  $d\geqslant 2$, the background  electrical potential ${H}$ is a harmonic function in $\RR^d$, and $u$ represents the total electric potential.
It is nature that the solution $u$ to \eqref{cductvt_transmi_prblm} satisfies the transmission conditions
\begin{equation}\label{transmi_cdtion}
u|_{+}=u|_{-}\quad\mbox{and}\quad\sigma_{k-1}\frac{\p u}{\p \nu_k}|_{+}=\sigma_{k}\frac{\p u}{\p \nu_k}|_{-}\quad\mbox{on}\quad \Gamma_k,\quad k=1,2,\ldots,N,
\end{equation}
where we used the notation $\nu_k$  to indicate the outward normal on $\Gamma_k$ and $$
\left.w\right|_{\pm}(x)=\lim _{h \rightarrow 0^{+}} w(x \pm h {\nu_k}),  \quad x \in \Gamma_k,
$$
for an arbitrary function $w$.

Let  $\Gamma $ be a $C^{1,\eta}$ surface. Let $H^{s}(\Gamma)$, for $s\in\RR$,  be the usual $L^2$-Sobolev space and let
$$H^{s}_0 (\Gamma):=\left\{\phi \in H^{s}(\Gamma) : \int_{\Gamma} \phi \mathrm{d}s=0\right\}.$$
For $s=0$, we use the notation $L^2_0(\Gamma)$.
Let  \[
G(x) =\left\{
\begin{array}{ll}
\frac{1}{2 \pi} \ln |x|, & d=2,\\
\frac{1}{(2-d) \omega_{d}}|x|^{2-d}, & d \geqslant 3,
\end{array}
\right.
\] be the outgoing fundamental solution to the Laplacian in $\RR^d$,
where $\omega_{d}$ is the area of the unit sphere in $\RR^d$.
The single and double layer potentials can then be defined by
\[
\Scal_{\Gamma}[\varphi](x):=\int_{\Gamma}G(x-y)\varphi(y)~\mathrm{d} s(y), \quad x\in \RR^d,
\]
\[
\mathcal{D}_{\Gamma}[\varphi](x):=\int_{\Gamma}\frac{\p}{\p \nu_y}G(x-y)\varphi(y)~\mathrm{d} s(y), \quad x\in \RR^d\setminus\Gamma,
\]
where $\varphi\in L^2(\Gamma)$ is the density function.
There hold the following jump relations on the surface \cite{Ammari2007}
\begin{equation} \label{singlejump}
\frac{\p}{\p\nu}\Scal_{\Gamma} [\varphi] \Big|_{\pm} = \(\pm \frac{1}{2}I+
\Kcal_{{\Gamma}}^*\)[\varphi] \quad \mbox{on } {\Gamma},
\end{equation}
\begin{equation}\label{doublejump}
\mathcal{D}_{{\Gamma}}[\varphi]\Big|_{\pm}=\(\mp \frac{1}{2}I+
\Kcal_{{\Gamma}} \)[\varphi] \quad \mbox{on } {\Gamma},
\end{equation}
where $\Kcal_{\Gamma}^*$ is the Neumann-Poincar\'e (NP) operator defined by
\[
\Kcal_\Gamma^*[\varphi]
(x) = \mbox{p.v.}\;\int_{{\Gamma}}\frac{\p G(x-y)}{\p \nu_x}\varphi(y)~\mathrm{d} s(y),
\]
and  $\Kcal_{\Gamma}$ is the adjoint operator of $\Kcal_{\Gamma}^*$, where p.v. stands for the Cauchy principle value.
We emphasize that $\Kcal_{\Gamma}^*$ is not self-adjoint in the usual inner product unless the domain is a circle or a sphere (see \cite{Lim_IJM_01}).

Define $\mathcal{H}^{s}=\prod _{k=1}^N  H^{s}(\Gamma_k)$ and $\mathcal{H}^{s}_0=\prod _{k=1}^N  H^{s}_0(\Gamma_k)$.
With the above preparations, one has the following integral representation of the conductivity transmission problem \eqref{conductvtA&AC}--\eqref{transmi_cdtion} with density function
%$\phi_k\in L^2_0(\Gamma_k)$ or
$\phi_k\in  H^{-\frac{1}{2}}_0(
\Gamma_k)$
\begin{equation}\label{potential_sol}
u(x)=H(x)+\sum_{k=1}^{N}\Scal_{\Gamma_k}[\phi_k](x).
\end{equation}
It follows from the second condition in \eqref{transmi_cdtion} that for all $k = 1,2,\ldots,N$,
\[
\sigma_{k-1}\left(\frac{\p H}{\p \nu_k}+\left.\frac{\p \Scal_{\Gamma_k}[\phi_k]}{\p \nu_k}\right|_{+}+\sum_{l\neq k}^{N}\frac{\p \Scal_{\Gamma_l}[\phi_l]}{\p \nu_k}\right)=\sigma_{k}\left(\frac{\p H}{\p \nu_k}+\left.\frac{\p \Scal_{\Gamma_k}[\phi_k]}{\p \nu_k}\right|_{-}+\sum_{l\neq k}^{N}\frac{\p \Scal_{\Gamma_l}[\phi_l]}{\p \nu_k}\right).
\]
%Note that we have used the notation $\nu_k$  to indicate the outward normal on $\Gamma_k$.
Using the jump formula \eqref{singlejump}, the above equations can be rewritten as
\begin{equation}\label{potential_matrix}
\begin{split}
\begin{bmatrix}
\lambda_{1}I - \Kcal_{\Gamma_1}^* & -\nu_1\cdot\nabla\Scal_{\Gamma_2} & \cdots & -\nu_1\cdot\nabla\Scal_{\Gamma_{N}} \\
-\nu_2\cdot\nabla\Scal_{\Gamma_1} & \lambda_{2}I-\Kcal_{\Gamma_2}^* & \cdots & -\nu_2\cdot\nabla\Scal_{\Gamma_{N}}\\
\vdots & \vdots &\ddots &\vdots \\
-\nu_{N}\cdot\nabla\Scal_{\Gamma_1} & -\nu_{N}\cdot\nabla\Scal_{\Gamma_2} & \cdots & \lambda_{N}I-\Kcal_{\Gamma_{N}}^*
\end{bmatrix}
\begin{bmatrix}
\phi_1  \\
\phi_2 \\
\vdots  \\
\phi_N
\end{bmatrix} = \begin{bmatrix}
\nu_1\cdot\nabla H  \\
\nu_2\cdot\nabla H \\
\vdots  \\
\nu_N\cdot\nabla H
\end{bmatrix},
\end{split}
\end{equation}
on $\mathcal{H}^{-1/2}_0$,
where
\begin{equation}\label{lamdk}
\lambda_k=\frac{\sigma_{k}+\sigma_{k-1}}{2(\sigma_{k}-\sigma_{k-1})},\quad k=1,2,\ldots,N.
\end{equation}
The problem \eqref{potential_sol} is solvable for $|\lambda_{k}|>\frac{1}{2}$ as shown in \cite[Theorem 2.1]{KDZIP24}, with the positivity assumption on the conductivities $\sigma_k$, $k=0,1,\ldots,N$.
However, in plasmon modes, it contains negative values in the multi-layered structure, that is $\sigma_k < 0$ holds for some $k = 1, 2, \ldots,N.$
This causes a serious difficulty in dealing with \eqref{potential_matrix}.

It is known that plasmon resonance is usually associated with some eigenvalue problem generated by the PDE system. We shall also explore the related eigenvalue problem for multi-layered structure. To simplify the analysis, we suppose that
\begin{equation}\label{eq:vepdef01}
\sigma_k
=\left\{
\begin{array}{ll}
\sigma_1, & k \quad \mbox{is odd},\\
\sigma_0, & k \quad \mbox{is even}.
\end{array}
\right.
\end{equation}
This is not a very restrictive assumption, and this case contains effects we are interested in.
In this setup, one can readily obtain that $\lambda_k = (-1)^{(k-1)}\lambda$, $k = 1,2,\ldots,N$, i.e.,
\[
\lambda_k=
\left\{
\begin{array}{ll}
\lambda, & k \quad \mbox{is odd},\\
-\lambda, &k \quad \mbox{is even},
\end{array}
\right.
\]
where
\begin{equation}\label{eq:deflamb01}
\lambda:=\frac{\sigma_{1}+\sigma_{0}}{2(\sigma_{1}-\sigma_{0})}.
\end{equation}
 We  apply the
operator
\begin{equation*}
\begin{bmatrix}
I & 0 & \cdots & 0 \\
0 & -I & \cdots & 0\\
\vdots & \vdots &\ddots &\vdots \\
0 & 0 & \cdots & (-1)^{N-1}I
\end{bmatrix} :\ \mathcal{H}^{-1/2}_0 \to \mathcal{H}^{-1/2}_0
\end{equation*}
to \eqref{potential_matrix}. Then we have
\begin{equation}\label{potential_matrix1}
\begin{split}
\begin{bmatrix}
\lambda I - \Kcal_{\Gamma_1}^* & -\nu_1\cdot\nabla\Scal_{\Gamma_2} & \cdots & -\nu_1\cdot\nabla\Scal_{\Gamma_{N}} \\
\nu_2\cdot\nabla\Scal_{\Gamma_1} & \lambda I+\Kcal_{\Gamma_2}^* & \cdots & \nu_2\cdot\nabla\Scal_{\Gamma_{N}}\\
\vdots & \vdots &\ddots &\vdots \\
(-1)^{N}\nu_{N}\cdot\nabla\Scal_{\Gamma_1} & (-1)^{N}\nu_{N}\cdot\nabla\Scal_{\Gamma_2} & \cdots & \lambda I+(-1)^{N}\Kcal_{\Gamma_{N}}^*
\end{bmatrix}
\begin{bmatrix}
\phi_1  \\
\phi_2 \\
\vdots  \\
\phi_N
\end{bmatrix} = \begin{bmatrix}
\nu_1\cdot\nabla H  \\
-\nu_2\cdot\nabla H \\
\vdots  \\
(-1)^{N-1}\nu_N\cdot\nabla H
\end{bmatrix},
\end{split}
\end{equation}
Let
$\mathbb{K}_N^*$ be an $N$-by-$N$ matrix type NP operator  on $\mathcal{H}^{-1/2}_0$ defined by
\begin{equation}\label{NPoperator}
\begin{split}
\mathbb{K}_N^*:=
\begin{bmatrix}
-\Kcal_{\Gamma_1}^* & -\nu_1\cdot\nabla\Scal_{\Gamma_2} & \cdots & -\nu_1\cdot\nabla\Scal_{\Gamma_{N}} \\
\nu_2\cdot\nabla\Scal_{\Gamma_1} & \Kcal_{\Gamma_2}^* & \cdots & \nu_2\cdot\nabla\Scal_{\Gamma_{N}}\\
\vdots & \vdots &\ddots &\vdots \\
(-1)^{N}\nu_{N}\cdot\nabla\Scal_{\Gamma_1} & (-1)^{N}\nu_{N}\cdot\nabla\Scal_{\Gamma_2} & \cdots & (-1)^{N}\Kcal_{\Gamma_{N}}^*
\end{bmatrix},
\end{split}
\end{equation}
and let  $\bm\phi := (\phi_1,\phi_2,\ldots,\phi_{N})^T$, $\bm{g}:=\left(\nu_1\cdot\nabla H,-\nu_2\cdot\nabla H,\ldots,(-1)^{N-1}\nu_N\cdot\nabla H\right)^T$.
Then, \eqref{potential_matrix} can be rewritten in the form
\begin{equation}\label{IRE}
\({\lambda}\,\mathbb{I}+\mathbb{K}_N^*\)\bm\phi = \bm{g},
\end{equation}
where $\mathbb{I}$ is given by
\[
\mathbb{I}:=
\begin{bmatrix}
I & 0 & \cdots & 0\\
0 & I & \cdots & 0\\
\vdots & \vdots &\ddots &\vdots \\
0 & 0 & \cdots & I
\end{bmatrix}.
\]
The spectral property of the operator $\mathbb{K}_N^*$ plays an very important role in solvability of the operator equation (\ref{IRE}), which is the purpose of our subsequent analysis.

\section{Properties of $\mathbb{K}_N^*$}\label{sec3}

In this section, we provide some properties of $\mathbb{K}_N^*$. More precisely, we shall analyze the adjoint operator $\mathbb{K}_N$ of $\mathbb{K}_N^*$, the spectrum of $\mathbb{K}_N^*$, and symmetrization of $\mathbb{K}_N^*$ under a certain twisted inner product by using Plemelj's symmetrization principle \cite{KPSARMA2007}.

%%%%%%%%%%%%%%%%%%%%%%%%%%%%%%%%%%%%%%%%%%%%%%%%%%%%
%\subsection{Adjoint operator of $\mathbb{K}_N^*$}
%%%%%%%%%%%%%%%%%%%%%%%%%%%%%%%%%%%%%%%%%%%%%%%%%%%%\newcommand{\vp}{\phi}
We first compute the adjoint of $\mathbb{K}_N^*$.
Denote by $\langle \cdot,\cdot \rangle_{L^2(\Gamma)}$ the Hermitian product on $L^{2}(\Gamma)$ with  $\Gamma = \Gamma_k$, for some $k=1,2,\ldots,N.$ By interchange orders of integration, it is easy to see that for $l\neq k $,
\[
\left\langle \pd{\Scal_{\Gamma_l}[\vp_l]}{\nu_k} , \vp_k
\right\rangle_{L^2(\Gamma_k)} = \left\langle \vp_l, \Dcal_{\Gamma_k}[\vp_k]
\right\rangle_{L^2(\Gamma_l)}.
\]
Thus the $L^2$-adjoint of $\mathbb{K}_N^*$, $\mathbb{K}_N$, is given by
\begin{equation} \label{eq:K*}
\mathbb{K}_N := \begin{bmatrix}
-\Kcal_{\Gamma_1} & \Dcal_{\Gamma_2} & \cdots & (-1)^{N}\Dcal_{\Gamma_{N}} \\
-\Dcal_{\Gamma_1} & \Kcal_{\Gamma_2} & \cdots & (-1)^{N}\Dcal_{\Gamma_{N}}\\
\vdots & \vdots &\ddots &\vdots \\
-\Dcal_{\Gamma_1} & \Dcal_{\Gamma_2} & \cdots & (-1)^{N}\Kcal_{\Gamma_{N}}
\end{bmatrix}.
\end{equation}
It is worth emphasizing that for $k = 1,2,\ldots,N$ and $l\neq k$, the operator $\Dcal_{\Gamma_k}$  in the $l$-th row of $\mathbb{K}_N$ is the one from $L^2(\Gamma_k)$ into $L^2(\Gamma_l)$.

%As in the single-layer or two-layer structures, $\mathbb{K}_N^*$ is not self-adjoint on $\mathcal{H}^{-1/2}_0$, but it can be symmetrized by introducing a new inner product on $\mathcal{H}^{-1/2}_0$.
To prove that $\mathbb{K}_N^*$ is symmetrizable, we shall make use of the following two Lemmas \ref{simplyconnectSD}--\ref{doublyconnectSD}  which can be proved by Green's formulas.

\begin{lem}\textup{\cite[Lemma 3.3]{Ammari2013}} \label{simplyconnectSD}
	Let $B \subset \RR^d$ be a bounded simply connected domain.
	\begin{itemize}
		\item[{\bf (i)}] If $u$ is a solution of $\Delta u =0$ in $B$, then
		\[
		\Scal_{\p B} \Big[\frac{\p u}{\p \nu} \Big{|}_- \Big] (x) = \Dcal_{\p B} \Big[u\big{|}_- \Big] (x),\quad x\in \RR^d \setminus
		\overline{B}.
		\]
				
		\item[{\bf (ii)}] If $u$ is a solution of
		\[
		\begin{cases}
		\Delta u = 0 \quad & \mbox{in } \RR^d \setminus \overline{B}, \\
		u(x) \to 0,  & |x| \to \infty,
		\end{cases}
		\]
		then
		\begin{equation*}
		\Scal_{\p B} \Big[\frac{\p u}{\p \nu} \big{|}_+ \Big] (x) = \Dcal_{\p B} \Big[u\big{|}_+ \Big] (x),\quad x\in B.
		\end{equation*}
	\end{itemize}
\end{lem}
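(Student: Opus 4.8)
The plan is to derive both identities from Green's representation formula, i.e.\ from Green's second identity
\[
\int_{\Om}\big(u\,\Delta v-v\,\Delta u\big)\,\d y=\int_{\p\Om}\Big(u\,\frac{\p v}{\p\nu}-v\,\frac{\p u}{\p\nu}\Big)\,\d s,
\]
applied with the fundamental solution $v(y)=G(x-y)$ and then evaluated at points $x$ for which the induced Dirac mass $\Delta_y G(x-y)=\delta_x$ lies outside the domain of integration. In each case the volume integral collapses (either $u$ is harmonic or the Dirac mass is absent), leaving only a boundary integral whose two pieces are precisely a double- and a single-layer potential. Throughout, the one-sided traces $u|_\pm$ and $\p u/\p\nu|_\pm$ are well defined because $\p B$ is $C^{1,\eta}$ and $u$ enjoys interior (resp.\ exterior) elliptic regularity.

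For part (i) I would take $\Om=B$ and fix $x\in\RR^d\setminus\overline{B}$. Then $\Delta v=\delta_x$ carries no mass in $B$ and $\Delta u=0$, so the left-hand side vanishes; recognizing the boundary integral as $\Dcal_{\p B}[u|_-](x)-\Scal_{\p B}[\p u/\p\nu|_-](x)$ and setting it to zero yields the claimed equality on $\RR^d\setminus\overline{B}$.

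For part (ii) I would run the same computation in the exterior, truncated to a large ball: take $\Om=\Om_R:=(\RR^d\setminus\overline{B})\cap B_R$ and fix $x\in B$. The crucial bookkeeping is orientation, since the outward normal of $\Om_R$ along $\p B$ is $-\nu$, which flips the overall sign of the $\p B$ contribution; meanwhile $x\in B$ lies outside $\Om_R$, so the volume term again vanishes. What survives is the $\p B$ integral, equal after the sign flip to $-\big(\Dcal_{\p B}[u|_+](x)-\Scal_{\p B}[\p u/\p\nu|_+](x)\big)$, together with an integral over the outer sphere $\p B_R$. Discarding the latter gives $\Scal_{\p B}[\p u/\p\nu|_+](x)=\Dcal_{\p B}[u|_+](x)$ for $x\in B$.

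The main obstacle---indeed the only genuinely analytic step---is showing that the $\p B_R$ integral tends to $0$ as $R\to\infty$, and this is exactly where the hypothesis $u\to0$ at infinity is used. A harmonic function on an exterior domain that vanishes at infinity satisfies $u=O(|x|^{2-d})$ and $\nabla u=O(|x|^{1-d})$ for $d\geqslant3$, and $u=O(|x|^{-1})$, $\nabla u=O(|x|^{-2})$ for $d=2$. Combining these with $G=O(|x|^{2-d})$ (resp.\ $O(\ln|x|)$), $\nabla G=O(|x|^{1-d})$, and $|\p B_R|\sim R^{d-1}$, a direct size count makes the $\p B_R$ integrand times surface measure $O(R^{2-d})$ when $d\geqslant3$ and $O(R^{-1}\ln R)$ when $d=2$, both of which vanish in the limit. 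The two-dimensional case, where $G$ grows logarithmically, is the only spot needing a little extra care, but it follows the same scaling pattern, and this completes the argument.
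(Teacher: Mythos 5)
Your proposal is correct and matches the paper's intended argument: the paper gives no written proof, simply remarking that Lemmas \ref{simplyconnectSD}--\ref{doublyconnectSD} ``can be proved by Green's formulas'' (citing Lemma 3.3 of \cite{Ammari2013}), and your application of Green's second identity with $v(y)=G(x-y)$ on $B$, respectively on the truncated exterior $\Om_R$ with the normal-orientation sign flip, is exactly that standard route. Your decay estimates $u=\Ocal(|x|^{2-d})$, $\nabla u=\Ocal(|x|^{1-d})$ for $d\geqslant 3$ and $u=\Ocal(|x|^{-1})$, $\nabla u=\Ocal(|x|^{-2})$ for $d=2$ correctly dispose of the $\p B_R$ term, including the logarithmic case.
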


\begin{lem}\label{doublyconnectSD}
	Let $B \subset \RR^d$ be a bounded doubly connected domain  lying between  the  interior boundary $S_i$ and the exterior boundary $S_e$.
	If $u$ is a solution of $\Delta u =0$ in $B$, then
	\[
	\Scal_{S_e} \Big[\frac{\p u}{\p \nu} \Big{|}_- \Big] (x)-\Scal_{S_i} \Big[\frac{\p u}{\p \nu} \Big{|}_+ \Big] (x)  = \Dcal_{S_e} \Big[u\big{|}_- \Big] (x)-\Dcal_{S_i} \Big[u\big{|}_+ \Big] (x),\quad x\in \RR^d \setminus
	\overline{B}.
	\]
\end{lem}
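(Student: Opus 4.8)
The plan is to obtain the identity directly from Green's second identity on the doubly connected domain $B$, in exactly the same spirit as Lemma \ref{simplyconnectSD}. First I would fix an evaluation point $x\in\RR^d\setminus\overline{B}$ and set $v(y):=G(x-y)$. Since $x$ lies outside $\overline{B}$, its only singularity sits at $y=x\notin\overline{B}$, so $v$ is harmonic in a neighbourhood of $\overline{B}$; by hypothesis $u$ is harmonic in $B$. Hence both $u$ and $v$ are harmonic in $B$, and in Green's second identity
\[
\int_B\(u\,\Delta v-v\,\Delta u\)\,\d y=\int_{\p B}\(u\,\frac{\p v}{\p n}-v\,\frac{\p u}{\p n}\)\,\d s
\]
the left-hand side vanishes, where $n$ denotes the outward unit normal of $B$ on $\p B=S_e\cup S_i$.

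The next step is to split the surviving boundary integral over $S_e$ and $S_i$ and to reconcile the outward normal $n$ of $B$ with the conventional normals $\nu$ entering the definitions of $\Scal$ and $\Dcal$. On the exterior boundary $S_e$ the outward normal of $B$ coincides with $\nu$, so that the relevant trace taken from within $B$ is the $|_-$ trace; on the interior boundary $S_i$, however, the outward normal of $B$ points into the enclosed hole and is therefore the negative of the conventional normal, so the trace from within $B$ is the $|_+$ trace and both the single- and double-layer terms over $S_i$ pick up a sign change. Substituting $v=G(x-\cdot)$ and identifying the single- and double-layer integrals, e.g. $\int_{S_e}G(x-y)\,\frac{\p u}{\p\nu}|_-\,\d s=\Scal_{S_e}[\frac{\p u}{\p\nu}|_-](x)$ and $\int_{S_e}\frac{\p G(x-y)}{\p\nu_y}\,u|_-\,\d s=\Dcal_{S_e}[u|_-](x)$, and analogously on $S_i$ with the induced sign flip giving $-\Dcal_{S_i}[u|_+](x)$ and $+\Scal_{S_i}[\frac{\p u}{\p\nu}|_+](x)$, the vanishing boundary integral rearranges into the asserted identity.

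The only genuine care required, and hence the step I expect to be the main bookkeeping obstacle, is the consistent tracking of the normal orientation on the inner boundary together with the one-sided traces dictated by the sign convention $w|_\pm(x)=\lim_{h\to0^+}w(x\pm h\nu)$. Once the sign reversal on $S_i$ is correctly accounted for, the result follows with no further analysis: the $C^{1,\eta}$ regularity of the surfaces guarantees that Green's identity applies and that all the layer potentials involved are well defined, exactly as in the simply connected case of Lemma \ref{simplyconnectSD}.
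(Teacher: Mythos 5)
Your proposal is correct and is exactly the argument the paper intends: the paper offers no written proof of this lemma beyond the remark that it "can be proved by Green's formulas," and your application of Green's second identity with $v=G(x-\cdot)$ harmonic near $\overline{B}$ (since $x\in\RR^d\setminus\overline{B}$), together with the orientation reversal $n=-\nu$ on $S_i$ that forces the $|_+$ traces and the sign flips there, is precisely that computation. Note that your derivation in fact yields $\Dcal_{S_i}\big[u|_+\big](x)$ as the second term on the right-hand side, whereas the lemma as printed reads $\Dcal_{S_e}\big[u|_+\big](x)$ --- this is a typo in the paper's statement, as confirmed by how the lemma is invoked in Cases II.ii and III.ii of the proof of Proposition \ref{NPspectral} (equations \eqref{m<l<n} and \eqref{n<l<m}, where the two double-layer terms live on the distinct exterior and interior boundaries), so your version is the correct one.
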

\begin{rem}
	If we take $u=\Scal_{\p B}[\vp]$ in Lemma \ref{simplyconnectSD} {\bf(i)}, we have
	\begin{equation*}
	- \frac{1}{2} \Scal_{\p B}[ \vp ] (x) +  \Scal_{\p B} \Kcal_{\p
		B}^*[\vp](x) = \Dcal_{\p B}  \Scal_{\p B} [\vp] (x), \quad x \in
	\RR^d \setminus \overline{B}.
	\end{equation*}
	By taking the limit as $x \to \p B$ from outside $B$ and using the jump relation \eqref{doublejump}, we obtain the well-known Calder\'on's identity (also known as Plemelj's symmetrization principle)
	\begin{equation} \label{SK*=KS_SL}
	\Scal_{\p B} \Kcal_{\p B}^* = \Kcal_{\p B} \Scal_{\p B}.
	\end{equation}
	For a two-layer structure, it is sufficient to use Lemma \ref{simplyconnectSD} to get the Calder\'on-type identity. We refer to \cite{Ammari2013} for more detail. In order to obtain Calder\'on-type identity for structures of layers $\geqslant 3$, the Lemma \ref{doublyconnectSD} becomes essential.
\end{rem}

Define
\begin{equation} \label{eq:S}
\mathbb{S}_N := \begin{bmatrix}
\Scal_{\Gamma_1} & \Scal_{\Gamma_2} & \cdots & \Scal_{\Gamma_{N}} \\
\Scal_{\Gamma_1} & \Scal_{\Gamma_2} & \cdots & \Scal_{\Gamma_{N}}\\
\vdots & \vdots &\ddots &\vdots \\
\Scal_{\Gamma_1} & \Scal_{\Gamma_2} & \cdots & \Scal_{\Gamma_{N}}
\end{bmatrix}.
\end{equation}
Again we emphasize that for $k = 1,2,\ldots,N$ and $l\neq k$, the operator $\Scal_{\Gamma_k}$ in the $l$-th row of $\mathbb{S}_N$ is the one from $L^2(\Gamma_k)$  into $L^2(\Gamma_l)$.
In the following, we give the main result in this section. %symmetrization of $\mathbb{K}_N^*$.

\begin{prop}\label{NPspectral}
	For $\bm{\phi}, \bm{\psi}\in \mathcal{H}^{-\frac{1}{2}}_0$, let $\mathcal{H}$ be the space $\mathcal{H}^{-\frac{1}{2}}_0$ equipped with the inner product defined by
	\begin{equation}\label{newip}
	\langle\bm{\phi}, \bm{\psi} \rangle_{\mathcal{H}} :=  \langle \bm\phi, -\mathbb{S}_N [\bm\psi] \rangle,
	\end{equation}
	where $ \langle\cdot,\cdot \rangle$ is the pair between $\mathcal{H}^{-\frac{1}{2}}$ and $\mathcal{H}^{\frac{1}{2}}$ pairing. Then we have
	\begin{itemize}
	\item[\bf(i)] $\langle\bm{\phi}, \bm{\psi} \rangle_{\mathcal{H}}$ is an inner product on $\mathcal{H}^{-\frac{1}{2}}_0$.
	\item[\bf(ii)] The  Calder\'on-type identity  holds:
	\begin{equation}\label{calderonN}
	\mathbb{S}_N\mathbb{K}_N^* = \mathbb{K}_N \mathbb{S}_N,
	\end{equation}
	i.e.,  $\mathbb{S}_N\mathbb{K}_N^*$ is self-adjoint.
	\item[\bf(iii)] The spectrum of $\mathbb{K}_N^*$ on $\mathcal{H}^{-\frac{1}{2}}$ lies in the interval $[-1/2, 1/2]$.
	\end{itemize}
\end{prop}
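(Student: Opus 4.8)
The plan is to deduce all three claims from Green's identities applied to the single harmonic field carried by a density $\bm\phi=(\phi_1,\dots,\phi_N)^T$, namely $u:=\sum_{k=1}^{N}\Scal_{\Gamma_k}[\phi_k]$, which is harmonic in each layer $A_0,A_1,\dots,A_N$ and decays at infinity. (Here the restriction to $\mathcal H^{-\frac12}_0$ enters: in $d=2$ the mean-zero condition removes the logarithmic term, so $u=\Ocal(|x|^{1-d})$ and every boundary term at infinity vanishes.) Two bookkeeping facts drive everything. First, the $m$-th component of $\mathbb{S}_N[\bm\phi]$ is simply the trace $u|_{\Gamma_m}$, and since the kernel $G$ is real and symmetric, the block $(\mathbb{S}_N)_{km}$ is the adjoint of $(\mathbb{S}_N)_{mk}$, so $\mathbb{S}_N$ is self-adjoint for the duality pairing between $\mathcal H^{-\frac12}$ and $\mathcal H^{\frac12}$. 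Second, reading off \eqref{NPoperator} and applying the jump relation \eqref{singlejump} on the diagonal while noting the off-diagonal terms are continuous across $\Gamma_m$, one gets the compact identity
\begin{equation*}
\big(\mathbb{K}_N^*[\bm\phi]\big)_m=(-1)^m\,\tfrac12\Big(\frac{\p u}{\p\nu_m}\Big|_{+}+\frac{\p u}{\p\nu_m}\Big|_{-}\Big)\quad\text{on }\Gamma_m,
\end{equation*}
i.e.\ each row of $\mathbb{K}_N^*$ returns, up to the sign $(-1)^m$, the \emph{averaged} normal derivative of the same total field $u$.

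For (i), Hermitian symmetry of $\langle\cdot,\cdot\rangle_{\mathcal H}$ is immediate from the self-adjointness of $\mathbb{S}_N$. Positivity I would read off the energy identity
\begin{equation*}
\langle\bm\phi,-\mathbb{S}_N[\bm\phi]\rangle=-\sum_{m=1}^{N}\big\langle \phi_m,\,u|_{\Gamma_m}\big\rangle_{\Gamma_m}=\int_{\RR^d}|\nabla u|^2\,\mathrm{d}x\ \ge 0,
\end{equation*}
which follows by applying Green's formula in each $A_p$, summing, and using that $u$ is continuous across every $\Gamma_k$ while the jump of $\p u/\p\nu_k$ equals $\phi_k$. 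Strictness is clear: vanishing of the Dirichlet energy forces $u$ constant in each layer, decay kills it in $A_0$, and continuity across the interfaces propagates $u\equiv0$ inward, so every $\phi_k=0$.

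For (ii), instead of matching the blocks of $\mathbb{S}_N\mathbb{K}_N^*$ and $\mathbb{K}_N\mathbb{S}_N$ one at a time, I would show the single operator $\mathbb{S}_N\mathbb{K}_N^*$ is self-adjoint; since $(\mathbb{S}_N\mathbb{K}_N^*)^*=\mathbb{K}_N\mathbb{S}_N$, this is exactly \eqref{calderonN}. Using the symmetry of $G$ to shift $\mathbb{S}_N$ onto the second argument and then the averaged-derivative formula, with $v:=\sum_k\Scal_{\Gamma_k}[\psi_k]$ one finds
\begin{equation*}
\langle\mathbb{S}_N\mathbb{K}_N^*[\bm\phi],\bm\psi\rangle=\sum_{m=1}^{N}(-1)^m\Big\langle\tfrac12\Big(\frac{\p u}{\p\nu_m}\Big|_{+}+\frac{\p u}{\p\nu_m}\Big|_{-}\Big),\,v\Big\rangle_{\Gamma_m}.
\end{equation*}
The crux is to regroup this surface sum into layer integrals: assigning the weight $\tfrac12(-1)^p$ to $A_p$ and collecting the two contributions of $\p u/\p\nu_m|_{\pm}$ from the layers flanking $\Gamma_m$, the alternating signs telescope and produce
\begin{equation*}
\langle\mathbb{S}_N\mathbb{K}_N^*[\bm\phi],\bm\psi\rangle=\sum_{p=0}^{N}\tfrac12(-1)^p\int_{A_p}\nabla u\cdot\nabla\overline{v}\,\mathrm{d}x,
\end{equation*}
which is manifestly symmetric in $(\bm\phi,\bm\psi)$, hence $\mathbb{S}_N\mathbb{K}_N^*=\mathbb{K}_N\mathbb{S}_N$. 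This is precisely where Lemma \ref{doublyconnectSD} is indispensable: for $N\ge3$ the interior layers $A_p$ are doubly connected, and it is the doubly-connected Green's identity (not Lemma \ref{simplyconnectSD} alone, which sufficed for the two-layer case in \cite{Ammari2013}) that converts the single-layer-of-normal-derivative terms into the double-layer traces in $\mathbb{K}_N$. I expect this regrouping — tracking which face ($+$ or $-$) of each $\Gamma_m$ belongs to which $A_p$ and verifying the induced coefficients agree with $\tfrac12(-1)^m$ on both faces — to be the main obstacle and the most delicate part of the proof.

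Finally, (iii) follows cleanly. By (ii), $\mathbb{K}_N^*$ is self-adjoint on $\mathcal H$, so its spectrum is real. Setting $\bm\psi=\bm\phi$ above gives $\langle\mathbb{S}_N\mathbb{K}_N^*[\bm\phi],\bm\phi\rangle=\tfrac12\sum_{p=0}^{N}(-1)^pE_p$ with $E_p:=\int_{A_p}|\nabla u|^2\,\mathrm{d}x\ge0$, whence
\begin{equation*}
\Big\langle\big(\tfrac12\mathbb{I}\mp\mathbb{K}_N^*\big)\bm\phi,\bm\phi\Big\rangle_{\mathcal H}=\tfrac12\sum_{p=0}^{N}\big(1\pm(-1)^p\big)E_p,
\end{equation*}
which equals $\sum_{p\text{ even}}E_p\ge0$ and $\sum_{p\text{ odd}}E_p\ge0$ for the upper and lower signs, respectively. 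Thus both $\tfrac12\mathbb{I}\pm\mathbb{K}_N^*$ are positive semidefinite on $\mathcal H$, confining the spectrum of $\mathbb{K}_N^*$ to $[-1/2,1/2]$; since $\langle\cdot,\cdot\rangle_{\mathcal H}$ is equivalent to the $H^{-1/2}$ norm on $\mathcal H^{-\frac12}_0$, this transfers to the stated conclusion. The structural point is that the alternating signs in $\mathbb{K}_N^*$ split the total Dirichlet energy into its even- and odd-indexed layers, and these two halves are exactly the quadratic forms of $\tfrac12\mathbb{I}-\mathbb{K}_N^*$ and $\tfrac12\mathbb{I}+\mathbb{K}_N^*$.
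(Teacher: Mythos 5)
Your proposal is correct, and it takes a genuinely different route from the paper's. The paper proves \textbf{(ii)} by brute force: it writes out every block of $\mathbb{S}_N\mathbb{K}_N^*$ and $\mathbb{K}_N\mathbb{S}_N$, splits into the cases $m=n$, $m<n$, $m>n$ with sub-cases on the intermediate index $l$, matches the blocks one at a time via Lemmas \ref{simplyconnectSD} and \ref{doublyconnectSD}, and absorbs the leftover $\frac12\Scal_{\Gamma_n}$ terms through the sign telescoping $\sum_{m<l<n}(-1)^l=-\frac{(-1)^m+(-1)^n}{2}$; it omits \textbf{(i)} and \textbf{(iii)} entirely, deferring to \cite{Ammari2013,EBFTARMA13} and \cite[Lemma 2.1]{KDZIP24}. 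You instead work globally with the total potential $u=\sum_k\Scal_{\Gamma_k}[\phi_k]$; your two bookkeeping identities check out against \eqref{NPoperator} and \eqref{singlejump} (row $m$ of $\mathbb{K}_N^*$ is exactly $(-1)^m$ times the averaged normal derivative of $u$, since the diagonal entry carries the jump average and the off-diagonal kernels are smooth across $\Gamma_m$), and your regrouping is sound: $\Gamma_m$ receives $-\frac12(-1)^{m-1}\,\partial_{\nu_m}u|_+$ from $A_{m-1}$ and $+\frac12(-1)^m\,\partial_{\nu_m}u|_-$ from $A_m$, which recombine into $(-1)^m\cdot\frac12\bigl(\partial_{\nu_m}u|_+ +\partial_{\nu_m}u|_-\bigr)$, so $\langle\mathbb{S}_N\mathbb{K}_N^*[\bm\phi],\bm\psi\rangle=\frac12\sum_{p}(-1)^p\int_{A_p}\nabla u\cdot\nabla\overline{v}\,\mathrm{d}x$ and \eqref{calderonN} follows from Hermitian symmetry of this form together with $(\mathbb{S}_N\mathbb{K}_N^*)^*=\mathbb{K}_N\mathbb{S}_N$. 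What your route buys: \textbf{(i)} and \textbf{(iii)} fall out of the same identity at $\bm\psi=\bm\phi$, with the elegant even/odd layer-energy splitting of $\frac12\mathbb{I}\mp\mathbb{K}_N^*$ — this is in substance the variational argument of \cite[Lemma 2.1]{KDZIP24} that the paper invokes without proof, so your write-up is more self-contained. What the paper's computation buys is finer, pointwise information: entrywise operator identities such as \eqref{equ315} on each interface, rather than identities of bilinear forms only. Two small remarks. First, your claim that Lemma \ref{doublyconnectSD} is ``indispensable'' is inaccurate for your own argument: you never use that lemma as stated, but rather Green's first identity on the doubly connected layers $A_p$ directly, which is the analytic content behind it applied to the energy form instead of to potential-theoretic trace identities. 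Second, in $d=2$ the exterior Green identity on $A_0$ requires the mean-zero condition (which you correctly flag), so your form identity is established on $\mathcal{H}^{-\frac12}_0$; this is the space on which the inner product \eqref{newip} lives, and the same implicit restriction is present in the paper's use of Lemma \ref{simplyconnectSD} (ii), so it is not a gap relative to the paper.
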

\begin{rem}
	Then by using the symmetrization principle \eqref{calderonN}, one can find that
	\[
	\langle\mathbb{K}_N^*[\bm{\phi}], \bm{\psi} \rangle_{\mathcal{H}} = \langle \mathbb{K}_N^*[\bm{\phi}], -\mathbb{S}_N [\bm\psi] \rangle = \langle \bm{\phi}, -\mathbb{K}_N\mathbb{S}_N [\bm\psi] \rangle = \langle \bm{\phi}, -\mathbb{S}_N\mathbb{K}_N^* [\bm\psi] \rangle = \langle \bm{\phi}, \mathbb{K}_N^* [\bm\psi] \rangle_{\mathcal{H}},
	\]
	which show that $\mathbb{K}_N^*$ is self-adjoint on $\mathcal{H}$.
	 Moreover, the the norm $\|\cdot\|_{\mathcal{H}}$ induced by the inner product \eqref{newip} is equivalent to the $\mathcal{H}^{-\frac{1}{2}}_0$ norm. We refer to \cite{KKLSY_JLMS2016} for more detail.
	 Since the interfaces $\Gamma_k$, $k =1,2,\ldots,N$, are of class $C^{1,\eta}$ for $0<\eta<1$, the NP operator $\mathbb{K}_N^*$ is compact as well as self-adjoint on $\mathcal{H}$.
	 Hence the  spectrum $\sigma(\mathbb{K}_N^*)$ is real and has only point spectrum accumulating to 0.
\end{rem}
\begin{proof}[\bf Proof of Proposition \ref{NPspectral}]
	$\bf(i)$ This can be proved similarly by following the proof  on two-layer structures in \cite{Ammari2013,EBFTARMA13},  so we omit it here.

	$\bf(ii)$ 	First by direct computations one has
	\begin{equation*}
	\mathbb{S}_N \mathbb{K}_N^* = \begin{bmatrix}
	-\Scal_{\Gamma_1}\Kcal^*_{\Gamma_1} +\sum\limits_{l\neq1}\limits^{N}(-1)^l\Scal_{\Gamma_l}\pd{\Scal_{\Gamma_1}}{\nu_l} & \Scal_{\Gamma_2}\Kcal^*_{\Gamma_2} +\sum\limits_{l\neq 2}\limits^{N}(-1)^l\Scal_{\Gamma_l}\pd{\Scal_{\Gamma_2}}{\nu_l} & \cdots & (-1)^N\Scal_{\Gamma_N}\Kcal^*_{\Gamma_N} +\sum\limits_{l=1}\limits^{N-1}(-1)^l\Scal_{\Gamma_l}\pd{\Scal_{\Gamma_N}}{\nu_l} \\
	-\Scal_{\Gamma_1}\Kcal^*_{\Gamma_1} +\sum\limits_{l\neq1}\limits^{N}(-1)^l\Scal_{\Gamma_l}\pd{\Scal_{\Gamma_1}}{\nu_l} & \Scal_{\Gamma_2}\Kcal^*_{\Gamma_2} +\sum\limits_{l\neq 2}\limits^{N}(-1)^l\Scal_{\Gamma_l}\pd{\Scal_{\Gamma_2}}{\nu_l} & \cdots & (-1)^N\Scal_{\Gamma_N}\Kcal^*_{\Gamma_N} +\sum\limits_{l=1}\limits^{N-1}(-1)^l\Scal_{\Gamma_l}\pd{\Scal_{\Gamma_N}}{\nu_l} \\
	\vdots & \vdots &\ddots &\vdots \\
	-\Scal_{\Gamma_1}\Kcal^*_{\Gamma_1} +\sum\limits_{l\neq1}\limits^{N}(-1)^l\Scal_{\Gamma_l}\pd{\Scal_{\Gamma_1}}{\nu_l} & \Scal_{\Gamma_2}\Kcal^*_{\Gamma_2} +\sum\limits_{l\neq 2}\limits^{N}(-1)^l\Scal_{\Gamma_l}\pd{\Scal_{\Gamma_2}}{\nu_l} & \cdots & (-1)^N\Scal_{\Gamma_N}\Kcal^*_{\Gamma_N} +\sum\limits_{l=1}\limits^{N-1}(-1)^l\Scal_{\Gamma_l}\pd{\Scal_{\Gamma_N}}{\nu_l}
	\end{bmatrix},
	\end{equation*}
	and
	\[
	\small{\begin{aligned}
	\mathbb{K}_N \mathbb{S}_N=
	  \begin{bmatrix}
	-\Kcal_{\Gamma_1} \Scal_{\Gamma_1} +\sum\limits_{l\neq 1}\limits^{N} (-1)^l\Dcal_{\Gamma_l} \Scal_{\Gamma_1} & -\Kcal_{\Gamma_1} \Scal_{\Gamma_2} +\sum\limits_{l\neq 1}\limits^{N} (-1)^l\Dcal_{\Gamma_l} \Scal_{\Gamma_2} & \cdots & -\Kcal_{\Gamma_1} \Scal_{\Gamma_N} +\sum\limits_{l\neq 1}\limits^{N} (-1)^l\Dcal_{\Gamma_l} \Scal_{\Gamma_N} \\
	\Kcal_{\Gamma_2} \Scal_{\Gamma_1} +\sum\limits_{l\neq 2}\limits^{N} (-1)^l\Dcal_{\Gamma_l} \Scal_{\Gamma_1} & \Kcal_{\Gamma_2} \Scal_{\Gamma_2} +\sum\limits_{l\neq 2}\limits^{N} (-1)^l\Dcal_{\Gamma_l} \Scal_{\Gamma_2} & \cdots & \Kcal_{\Gamma_2} \Scal_{\Gamma_N} +\sum\limits_{l\neq 2}\limits^{N} (-1)^l\Dcal_{\Gamma_l} \Scal_{\Gamma_N}  \\
	\vdots & \vdots &\ddots &\vdots \\
	(-1)^N\Kcal_{\Gamma_N} \Scal_{\Gamma_1} +\sum\limits_{l=1}\limits^{N-1} (-1)^l\Dcal_{\Gamma_l} \Scal_{\Gamma_1} & (-1)^N\Kcal_{\Gamma_N} \Scal_{\Gamma_2} +\sum\limits_{l=1}\limits^{N-1} (-1)^l\Dcal_{\Gamma_l} \Scal_{\Gamma_2} & \cdots & (-1)^N\Kcal_{\Gamma_N} \Scal_{\Gamma_N} +\sum\limits_{l=1}\limits^{N-1} (-1)^l\Dcal_{\Gamma_l} \Scal_{\Gamma_N}
	\end{bmatrix}.
	\end{aligned}}
	\]
	We now check the on-diagonal elements, the above-diagonal elements, and the below-diagonal elements, respectively.
	
\textbf{Case I} $(\mathbb{S}_N\mathbb{K}_N^*)_{mm} = (\mathbb{K}_N \mathbb{S}_N)_{mm}$ for $m=1,2,\ldots,N$. By \eqref{SK*=KS_SL} it follows that
	$\Scal_{\Gamma_m} \Kcal_{\Gamma_m}^* = \Kcal_{\Gamma_m}
	\Scal_{\Gamma_m}$ on $\Gamma_m$, $m=1,2,\ldots,N$.
	Thus, we have only to prove that for $m = 1,2,\ldots,N$ and $l\neq m$,
	\begin{equation}\label{equ315}
	\Scal_{\Gamma_l}\pd{\Scal_{\Gamma_m}}{\nu_l} [\vp_m] = \Dcal_{\Gamma_l} \Scal_{\Gamma_m} [\vp_m] \quad\mbox{on } \Gamma_m.
	\end{equation}
	If we set $u(x) = \Scal_{\Gamma_m}[\vp_m](x)$
	and $B=\cup_{k = l}^{N} A_k
	$ in Lemma \ref{simplyconnectSD}, we can deduce that for the case $l<m$ %(r_l>r_m)
	of \eqref{equ315} follows from Lemma \ref{simplyconnectSD} (ii), and the case $l>m$  of that follows from Lemma \ref{simplyconnectSD} (i), respectively.
	Hence, 	 $(\mathbb{S}_N\mathbb{K}_N^*)_{mm} = (\mathbb{K}_N \mathbb{S}_N)_{mm}$ holds for all $m=1,2,\ldots,N$.
	%%%%%%%%%%%%%%%%%%%%%%%%%%%%%%%%%%%%%%%
	%%%%%%%%%%%%%%%%%%%%%%%%%%%%%%%%%%%%%%%%
	
	\textbf{Case II} $(\mathbb{S}_N\mathbb{K}_N^*)_{mn} = (\mathbb{K}_N \mathbb{S}_N)_{mn}$ for $m,n=1,2,\ldots,N$ and $m<n$. 	We want to make sure that the following holds:
	\begin{equation}\label{equ316}
	(-1)^n\Scal_{\Gamma_n}\Kcal^*_{\Gamma_n}[\vp_n] +\sum\limits_{l\neq n}\limits^{N}(-1)^l\Scal_{\Gamma_l}\pd{}{\nu_l}\Scal_{\Gamma_n}[\vp_n] = (-1)^m\Kcal_{\Gamma_m} \Scal_{\Gamma_n}[\vp_n] +\sum\limits_{l\neq m}\limits^{N} (-1)^l\Dcal_{\Gamma_l} \Scal_{\Gamma_n}[\vp_n],\quad \mbox{on } \Gamma_m.
	\end{equation}
	Next we divide $l=1,2,\ldots,N$ into three cases.
	
	\textbf{Case II.i} $1\leqslant l\leqslant m$.
    We set $u=\Scal_{\Gamma_n}[\vp_n]$ and $B=\cup_{k = l}^{N} A_k$. By using Lemma \ref{simplyconnectSD} (ii), we have
	\[
	\Scal_{\Gamma_l} \Big[\frac{\p \Scal_{\Gamma_n} [\vp_n]}{\p \nu_l}  \Big] (x) = \Dcal_{\Gamma_l} \Scal_{\Gamma_n} [\vp_n](x) \quad \textmd{for } x \in {B}.
	\]
	Thus for $l<m$, we have
	\begin{equation}\label{l<m<n}
	\Scal_{\Gamma_l} \Big[\frac{\p \Scal_{\Gamma_n} [\vp_n]}{\p \nu_l}  \Big] = \Dcal_{\Gamma_l} \Scal_{\Gamma_n} [\vp_n] \quad\mbox{on } \Gamma_m,
	\end{equation}
	and for  $l = m$, by taking the limit as $x \to \Gamma_m|_-$, we further have
	\begin{equation}\label{l=m<n}
	\Scal_{\Gamma_m} \dfrac{\p}{\p\nu_m}\Scal_{\Gamma_n} [\vp_n] = \frac{1}{2} \Scal_{\Gamma_n} [\vp_n] +  \Kcal_{\Gamma_m} \Scal_{\Gamma_n} [\vp_n],\quad \mbox{on } \Gamma_m.
	\end{equation}

	%%%%%%%%%%%%%%%%%%%%%%%%%%%%5
	%%%%%%%%%%%%%%%%%%%%%%%%%%%%%
	\textbf{Case II.ii} $m<l<n$. We  set $u=\Scal_{\Gamma_n}[\vp_n]$ and $B=\cup_{k = l}^{n} A_k$. By using Lemma \ref{doublyconnectSD}, we have
	\[
	\begin{aligned}
	&\quad \Scal_{\Gamma_l} \Big[\frac{\p\Scal_{\Gamma_n}[\vp_n]}{\p \nu_l}  \Big] (x)-\Scal_{\Gamma_n} \Big[\frac{\p\Scal_{\Gamma_n}[\vp_n]}{\p \nu_n} \Big{|}_+ \Big] (x)\\&  = \Dcal_{\Gamma_l} \Big[\Scal_{\Gamma_n}[\vp_n] \Big] (x)-\Dcal_{\Gamma_n} \Big[\Scal_{\Gamma_n}[\vp_n]\Big] (x),\quad x\in \RR^d \setminus
	\overline{B},
	\end{aligned}
	\]
	and thus we get
	\begin{equation}\label{m<l<n}
	\begin{aligned}
	&\quad \Scal_{\Gamma_l} \frac{\p \Scal_{\Gamma_n}[\vp_n]}{\p \nu_l}   -\frac{1}{2}\Scal_{\Gamma_n} [\vp_n] - \Scal_{\Gamma_n} \Kcal_{\Gamma_n}^* [\vp_n]\\&  = \Dcal_{\Gamma_l} \Scal_{\Gamma_n}[\vp_n]  -\Dcal_{\Gamma_n} \Scal_{\Gamma_n}[\vp_n] ,\quad \mbox{on } \Gamma_m.
	\end{aligned}
	\end{equation}

	\textbf{Case II.iii} $l\geqslant n$. We set $u=\Scal_{\Gamma_n}[\vp_n]$ and $B=\cup_{k = l}^{N} A_k$. By using Lemma \ref{simplyconnectSD} (i), we have
	\[
	\Scal_{\Gamma_l} \Big[\frac{\p \Scal_{\Gamma_n} [\vp_n]}{\p \nu_l}\big{|}_-  \Big] (x) = \Dcal_{\Gamma_l} \Scal_{\Gamma_n} [\vp_n](x) \quad \textmd{for } \quad x\in \RR^d \setminus
	\overline{B},
	\]
	Thus, we have for $l=n$,
	\begin{equation} \label{m<l=n}
	-\frac{1}{2}\Scal_{\Gamma_n} [\vp_n] + \Scal_{\Gamma_n} \Kcal_{\Gamma_n}^* [\vp_n] = \Dcal_{\Gamma_n} \Scal_{\Gamma_n} [\vp_n] \quad \textmd{on } \Gamma_m,
	\end{equation}
	and for $l>n$,
	\begin{equation}\label{l>n>m}
	\Scal_{\Gamma_l} \frac{\p \Scal_{\Gamma_n} [\vp_n]}{\p \nu_l} = \Dcal_{\Gamma_l} \Scal_{\Gamma_n} [\vp_n]  \quad \textmd{on } \Gamma_m.
	\end{equation}

	From \eqref{l<m<n} and \eqref{l>n>m},  it suffices to
	prove that
	\begin{equation}\label{equ321}
	(-1)^n\Scal_{\Gamma_n}\Kcal^*_{\Gamma_n}[\vp_n] +\sum\limits_{m\leqslant l< n}(-1)^l\Scal_{\Gamma_l}\pd{}{\nu_l}\Scal_{\Gamma_n}[\vp_n] = (-1)^m\Kcal_{\Gamma_m} \Scal_{\Gamma_n}[\vp_n] +\sum\limits_{m< l\leqslant n}\limits^{N} (-1)^l\Dcal_{\Gamma_l} \Scal_{\Gamma_n}[\vp_n],\quad \mbox{on } \Gamma_m.
	\end{equation} 	
	%Let $A_L$ and $A_R$ denote, respectively, the LHS and RHS terms in \eqref{equ321}.
	In view of \eqref{l=m<n}, \eqref{m<l<n} and  \eqref{m<l=n}, we can obtain that
	\[
	\begin{aligned}
	&\quad (-1)^n\Scal_{\Gamma_n}\Kcal^*_{\Gamma_n}[\vp_n] +\sum\limits_{m\leqslant l< n}(-1)^l\Scal_{\Gamma_l}\pd{}{\nu_l}\Scal_{\Gamma_n}[\vp_n]\\
	& = (-1)^n\(\frac{1}{2}\Scal_{\Gamma_n} [\vp_n] + \Dcal_{\Gamma_n} \Scal_{\Gamma_n} [\vp_n]\) + (-1)^m\(\frac{1}{2} \Scal_{\Gamma_n} [\vp_n] +  \Kcal_{\Gamma_m} \Scal_{\Gamma_n} [\vp_n]\)\\
	& \quad + \sum\limits_{m< l< n}(-1)^l\(\Dcal_{\Gamma_l} \Scal_{\Gamma_n}[\vp_n]  -\Dcal_{\Gamma_n} \Scal_{\Gamma_n}[\vp_n] + \frac{1}{2}\Scal_{\Gamma_n} [\vp_n] + \Scal_{\Gamma_n} \Kcal_{\Gamma_n}^* [\vp_n]\)\\
	& = (-1)^m\Kcal_{\Gamma_m} \Scal_{\Gamma_n} [\vp_n] +  \sum\limits_{m< l\leqslant  n}(-1)^l\Dcal_{\Gamma_l} \Scal_{\Gamma_n}[\vp_n]   +\(\frac{(-1)^n}{2}+ \frac{(-1)^m}{2} + \sum\limits_{m< l< n}(-1)^l\)\Scal_{\Gamma_n} [\vp_n] \\
	& = (-1)^m\Kcal_{\Gamma_m} \Scal_{\Gamma_n} [\vp_n] +  \sum\limits_{m< l\leqslant  n}(-1)^l\Dcal_{\Gamma_l} \Scal_{\Gamma_n}[\vp_n],
	\end{aligned}
	\]
	where the last equality follows from $\sum\limits_{m< l< n}(-1)^l = -\dfrac{(-1)^m+(-1)^n}{2}$.
	Hence,  $(\mathbb{S}_N\mathbb{K}_N^*)_{mn} = (\mathbb{K}_N \mathbb{S}_N)_{mn}$ for $m,n=1,2,\ldots,N$ and $m<n$.

	\textbf{Case III} $(\mathbb{S}_N\mathbb{K}_N^*)_{mn} = (\mathbb{K}_N \mathbb{S}_N)_{mn}$ for $m,n=1,2,\ldots,N$ and $m>n$.
	We also want to make sure that the equality \eqref{equ316} holds. 	Similarly, we divide $l=1,2,\ldots,N$ into three cases.

	\textbf{Case III.i} $l\leqslant n$. We set $u=\Scal_{\Gamma_n}[\vp_n]$ and $B=\cup_{k = l}^{N} A_k$. By using Lemma \ref{simplyconnectSD} (ii), we have
	\[
	\Scal_{\Gamma_l} \Big[\frac{\p \Scal_{\Gamma_n} [\vp_n]}{\p \nu_l}\big{|}_+  \Big] (x) = \Dcal_{\Gamma_l} \Scal_{\Gamma_n} [\vp_n](x) \quad \textmd{for } x \in {B}.
	\]
	Thus for $l<n$, we have
	\begin{equation}\label{l<n<m}
	\Scal_{\Gamma_l} \Big[\frac{\p \Scal_{\Gamma_n} [\vp_n]}{\p \nu_l}  \Big] = \Dcal_{\Gamma_l} \Scal_{\Gamma_n} [\vp_n] \quad\mbox{on } \Gamma_m,
	\end{equation}
	and for  $l = n$,  we further have
	\begin{equation}\label{l=n<m}
	\frac{1}{2} \Scal_{\Gamma_n} [\vp_n] +  \Scal_{\Gamma_n} \Kcal^*_{\Gamma_n} [\vp_n] = \Dcal_{\Gamma_n} \Scal_{\Gamma_n} [\vp_n],\quad \mbox{on } \Gamma_m.
	\end{equation}
	
	\textbf{Case III.ii} $n<l<m$. We  set $u=\Scal_{\Gamma_n}[\vp_n]$ and $B=\cup_{k = n}^{l} A_k$. By using Lemma \ref{doublyconnectSD}, we have
	\[
	\begin{aligned}
	&\quad \Scal_{\Gamma_n} \Big[\frac{\p \Scal_{\Gamma_n}[\vp_n]}{\p \nu_n} \Big{|}_- \Big] (x)-\Scal_{\Gamma_l} \Big[\frac{\p \Scal_{\Gamma_n}[\vp_n]}{\p \nu_l}  \Big] (x)\\&  = \Dcal_{\Gamma_n} \Big[\Scal_{\Gamma_n}[\vp_n]\big{|}_- \Big] (x)-\Dcal_{\Gamma_l} \Big[\Scal_{\Gamma_n}[\vp_n]\big{|}_+ \Big] (x),\quad x\in \RR^d \setminus
	\overline{B},
	\end{aligned}
	\]
	and thus we get
	\begin{equation}\label{n<l<m}
	\begin{aligned}
	&\quad-\frac{1}{2}\Scal_{\Gamma_n} [\vp_n] + \Scal_{\Gamma_n} \Kcal_{\Gamma_n}^* [\vp_n] - \Scal_{\Gamma_l} \frac{\p \Scal_{\Gamma_n}[\vp_n]}{\p \nu_l}   \\&  =\Dcal_{\Gamma_n} \Scal_{\Gamma_n}[\vp_n] -\Dcal_{\Gamma_l} \Scal_{\Gamma_n}[\vp_n]   ,\quad \mbox{on } \Gamma_m.
	\end{aligned}
	\end{equation}
	
	\textbf{Case III.iii} $l\geqslant m$. We set $u=\Scal_{\Gamma_n}[\vp_n]$ and $B=\cup_{k = l}^{N} A_k$. By using Lemma \ref{simplyconnectSD} (i), we have
	\[
	\Scal_{\Gamma_l} \Big[\frac{\p \Scal_{\Gamma_n} [\vp_n]}{\p \nu_l}  \Big] (x) = \Dcal_{\Gamma_l} \Scal_{\Gamma_n} [\vp_n](x) \quad \textmd{for } \quad x\in \RR^d \setminus
	\overline{B},
	\]
	Thus, for $l=m$,  by taking the limit as $x \to \Gamma_m|_+$, we further have
	\begin{equation} \label{n<l=m}
	\Scal_{\Gamma_m} \Big[\frac{\p \Scal_{\Gamma_n} [\vp_n]}{\p \nu_m}  \Big] = -\frac{1}{2}\Scal_{\Gamma_n} [\vp_n] + \Kcal_{\Gamma_m}\Scal_{\Gamma_n}  [\vp_n]  \quad \textmd{on } \Gamma_m,
	\end{equation}
	and for $l>m$,
	\begin{equation}\label{l>m>n}
	\Scal_{\Gamma_l} \frac{\p \Scal_{\Gamma_n} [\vp_n]}{\p \nu_l} = \Dcal_{\Gamma_l} \Scal_{\Gamma_n} [\vp_n](x)  \quad \textmd{on } \Gamma_m.
	\end{equation}
	Therefore, from \eqref{l<n<m}--\eqref{l>m>n}, we get for $m,n=1,2,\ldots,N$ and $m>n$,
	\[
	\begin{aligned}
	(\mathbb{S}_N\mathbb{K}_N^*)_{mm} &= 	(-1)^n\Scal_{\Gamma_n}\Kcal^*_{\Gamma_n}[\vp_n] +\sum\limits_{l\neq n}\limits^{N}(-1)^l\Scal_{\Gamma_l}\pd{}{\nu_l}\Scal_{\Gamma_n}[\vp_n]\\
	&  =\sum\limits_{l< n}(-1)^l\Dcal_{\Gamma_l} \Scal_{\Gamma_n} [\vp_n]  + (-1)^n\(-\frac{1}{2}\Scal_{\Gamma_n} [\vp_n] + \Dcal_{\Gamma_n} \Scal_{\Gamma_n} [\vp_n]\)\\
	&\quad  + \sum\limits_{ n<l<m}\limits^{N}(-1)^l\(-\frac{1}{2}\Scal_{\Gamma_n} [\vp_n] + \Scal_{\Gamma_n} \Kcal_{\Gamma_n}^* [\vp_n] - \Dcal_{\Gamma_n} \Scal_{\Gamma_n}[\vp_n] +\Dcal_{\Gamma_l} \Scal_{\Gamma_n}[\vp_n]\)\\
	&\quad  +
	(-1)^m\(-\frac{1}{2} \Scal_{\Gamma_n} [\vp_n] +  \Kcal_{\Gamma_m} \Scal_{\Gamma_n} [\vp_n]\) + \sum\limits_{ l>m}(-1)^l\Dcal_{\Gamma_l} \Scal_{\Gamma_n} [\vp_n]\\
	& = (-1)^m\Kcal_{\Gamma_m} \Scal_{\Gamma_n} [\vp_n] +  \sum\limits_{ l\neq m}(-1)^l\Dcal_{\Gamma_l} \Scal_{\Gamma_n}[\vp_n]   -\(\frac{(-1)^n}{2}+ \frac{(-1)^m}{2} + \sum\limits_{n< l< m}(-1)^l\)\Scal_{\Gamma_n} [\vp_n] \\
	& = (-1)^m\Kcal_{\Gamma_m} \Scal_{\Gamma_n} [\vp_n] +  \sum\limits_{ l\neq m}(-1)^l\Dcal_{\Gamma_l} \Scal_{\Gamma_n}[\vp_n] \\
	& = (\mathbb{K}_N \mathbb{S}_N)_{mn}.
	\end{aligned}
	\]
	The proof of $\bf(ii)$ is complete.
	
	$\bf(iii)$ The proof is essentially similar to the proof of \cite[Lemma 2.1]{KDZIP24}, we omit it here.	
	
	The proof is complete.
\end{proof}

\section{Explicit formulae for the integral equation in multi-layered confocal ellipses}\label{sec4}
%In order to give the explicit formulae of the eigenvalues of the NP operator $\mathbb{K}_N^*$,
In this section, we explicitly compute the solution $\phi_k$ of the integral equation \eqref{potential_matrix} on the non-radial case, more specifically, we consider  the conductivity transmission problem \eqref{conductvtA&AC}--\eqref{transmi_cdtion} with $A$ being a multi-layered confocal ellipse. We introduce the elliptic coordinates $(\xi, \eta)$ so that $x=(x_1,x_2)$ in Cartesian coordinates are defined by
\begin{align}\label{ellipcoor}
x_1=R \cosh \xi  \cos \eta, \quad x_2=R \sinh \xi  \sin \eta,\quad \xi >   0, \quad 0\leqslant   \eta \leqslant   2\pi,
\end{align}
where $2R$ is the focal distance. Precisely, we give a sequence of layers by
\begin{equation}\label{eq:aj}
A_{0}:=\{(\xi, \eta): \xi>\xi_{1}\},\; A_k:=\{(\xi, \eta):\xi_{k+1}<\xi\leqslant   \xi_{k}\},\;  k=1,2,\ldots, N-1,\; A_{N}:=\{(\xi, \eta):\xi\leqslant   \xi_{N}\},
\end{equation}
and the interfaces between the adjacent layers can be rewritten by
\begin{equation}\label{interface}
\Gamma_k:=\left\{(\xi, \eta) : \xi = \xi_k\right\}, \quad  k=1,2,\ldots,N,
\end{equation}
where $N\in \mathbb{N}$. We assume $\Gamma_k,$ $k=1,2,\ldots,N$, are confocal ellipses whose common foci are denoted
by $(\pm R, 0)$.
So the confocal ellipses have the same elliptic coordinates.
The illustration of $N$-layer confocal ellipses is shown in Figure \ref{fig:1}.
\begin{figure}[!h]
	\begin{center}
		{\includegraphics[width=2.5in]{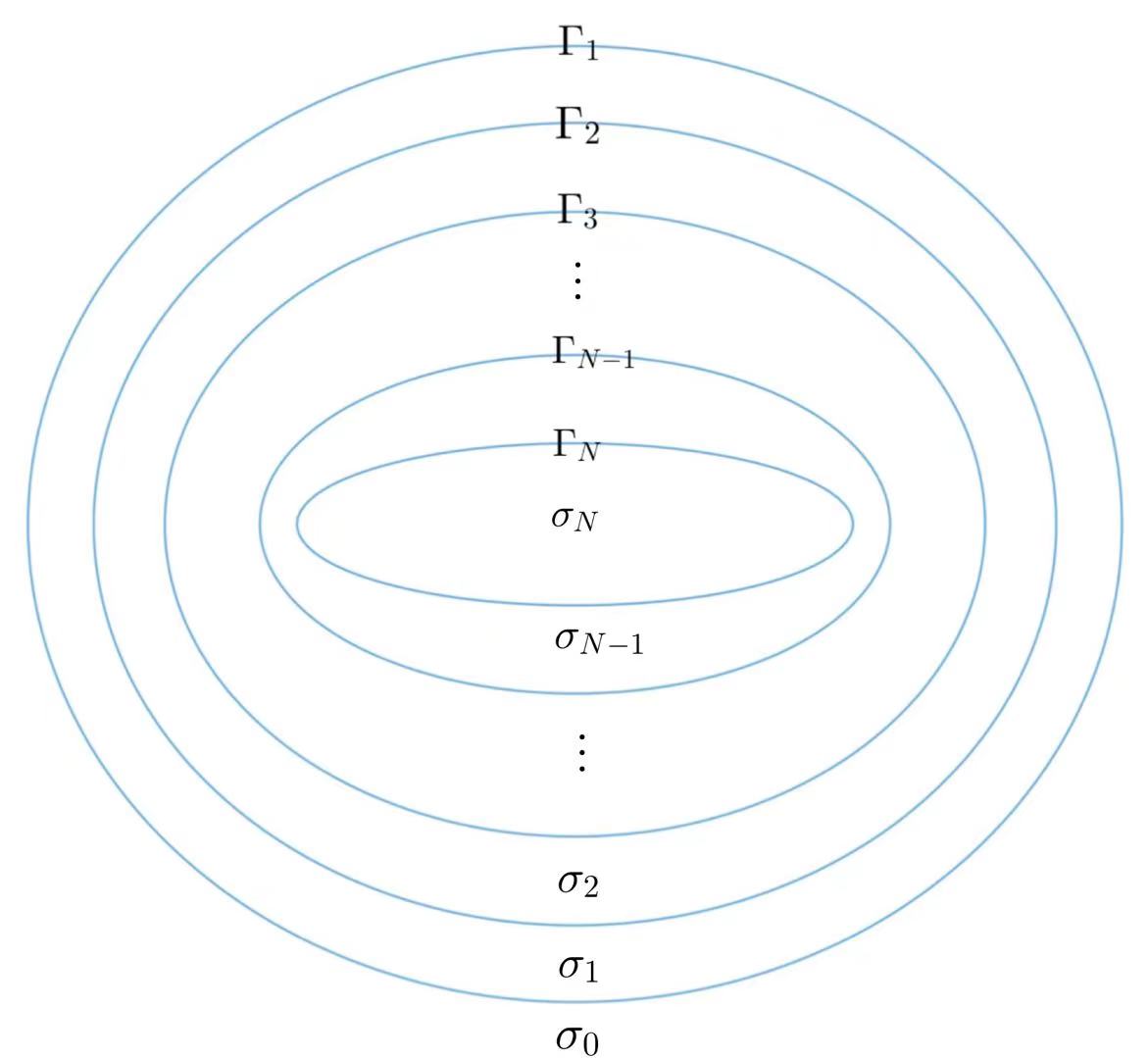}}
	\end{center}
	\caption{Schematic illustration of an $N$-layer confocal ellipse.
	}\label{fig:1}
\end{figure}

For any positive elliptic radius $\xi_0$, the set $\Gamma_0 = \{ (\xi, \eta) : \xi = \xi_0\}$ is an ellipse whose foci are $(\pm R, 0)$. One can see easily that the length element $\mathrm{d} s$ and the outward normal derivative $\frac{\p}{\p \nu}$ on $\Gamma_0$ are given in terms of the elliptic coordinates by
\begin{equation*}
\mathrm{d} s = \gamma \mathrm{d} \eta \quad \mbox{and} \quad \frac{\p}{\p \nu} = \gamma^{-1}\frac{\p}{\p \xi},
\end{equation*}
where
\begin{equation*}
\gamma =  \gamma (\xi_0, \eta) = R \sqrt{\sinh^2\xi_0+\sin^2\eta}.
\end{equation*}
To proceed, it is convenient to use the following notations: for  $n=1,2,\dots$,
\begin{align*}
\beta_n^{c,0} := \gamma (\xi_0, \eta)^{-1} \cos (n\eta) \quad \mbox{and} \quad \beta_n^{s,0} := \gamma (\xi_0, \eta)^{-1} \sin (n\eta).
\end{align*}
For a nonnegative integer $n$, it is proved in \cite{Chung2014, Ando2016} that
\begin{align}\label{S-ellipse-cos}
\mathcal{S}_{\Gamma_0}[ \beta_n^{c,0}](x) = \begin{cases}
\ds -\frac{\cosh (n\xi)}{n\e^{n\xi_0}}\cos (n\eta),\quad & \xi  \leqslant
 \xi_0,\\
\nm
\ds -\frac{\cosh (n\xi_0)}{n\e^{n\xi}}\cos (n\eta),\quad & \xi  > \xi_0,
\end{cases}
\end{align}
and
\begin{align}\label{S-ellipse-sin}
\mathcal{S}_{\Gamma_0}[\beta_n^{s,0}](x) = \begin{cases}
\ds -\frac{\sinh (n\xi)}{n\e^{n\xi_0}}\sin (n\eta),\quad & \xi  \leqslant \xi_0,\\\nm
\ds -\frac{\sinh (n\xi_0)}{n\e^{n\xi}}\sin (n\eta),\quad & \xi  > \xi_0.
\end{cases}
\end{align}
Hence, there hold
\begin{align}\label{dS-ellipse-cos}
\frac{\p}{\p \nu}\mathcal{S}_{\Gamma_0}[ \beta_n^{c,0}](x) = \begin{cases}
\ds -\frac{\sinh (n\xi)}{\e^{n\xi_0}}\gamma (\xi, \eta)^{-1}\cos (n\eta),\quad & \xi  < \xi_0,\\\nm
\ds \frac{\cosh (n\xi_0)}{\e^{n\xi}}\gamma (\xi, \eta)^{-1}\cos (n\eta),\quad & \xi  > \xi_0,
\end{cases}
\end{align}
and
\begin{align}\label{dS-ellipse-sin}
\frac{\p}{\p \nu}\mathcal{S}_{\Gamma_0}[\beta_n^{s,0}](x) = \begin{cases}
\ds -\frac{\cosh (n\xi)}{\e^{n\xi_0}}\gamma (\xi, \eta)^{-1}\sin (n\eta),\quad & \xi  < \xi_0,\\\nm
\ds \frac{\sinh (n\xi_0)}{\e^{n\xi}}\gamma (\xi, \eta)^{-1}\sin (n\eta),\quad & \xi  > \xi_0.
\end{cases}
\end{align}
Moreover, we also have
\begin{align}\label{K-ellipse}
\mathcal{K}^*_{\Gamma_0} [\beta_n^{c,0}] = \frac{1}{2 \e^{2n\xi_0}}\beta_n^{c,0}\quad \mbox{and} \quad \mathcal{K}^*_{\Gamma_0} [\beta_n^{s,0}] = -\frac{1}{2 \e^{2n\xi_0}}\beta_n^{s,0}.
\end{align}

In what follows, we define $\bm{s}:=(\sinh (n\xi_1),\sinh (n\xi_2),\ldots,\sinh (n\xi_N))^T$ and $\bm{c}:=(\cosh (n\xi_1),\cosh (n\xi_2),\ldots,\cosh (n\xi_N))^T$. Let $\bm{e}_k:=(0,0,\ldots,1,0,\ldots,0)^T$ be the $N$-dimensional vector with the $k$-th entrance being one. We shall also define $n$-order even \textit{GPM} $\mathbf{M}_{N,c}^{(n)}$ as follows,
	\begin{equation}\label{MNc}
	\begin{split}
	\mathbf{M}_{N,c}^{(n)}:=
	\begin{bmatrix}
	\lambda_{1}- \frac{1}{2 \e^{2n\xi_1}} & -\frac{\cosh (n\xi_2)}{\e^{n\xi_1}} &  \cdots & -\frac{\cosh (n\xi_{N-1})}{\e^{n\xi_1}}& -\frac{\cosh (n\xi_{N})}{\e^{n\xi_1}} \\
	\nm
	\frac{\sinh (n\xi_2)}{ \e^{n\xi_1}} & 	\lambda_{2}- \frac{1}{2 \e^{2n\xi_2}}& \cdots & -\frac{\cosh (n\xi_{N-1})}{\e^{n\xi_2}}& -\frac{\cosh (n\xi_N)}{\e^{n\xi_2}}\\
	\nm
	\vdots & \vdots &\ddots& \vdots &\vdots \\
	\nm
	\frac{\sinh (n\xi_{N-1})}{ \e^{n\xi_1}} &  \frac{\sinh (n\xi_{N-1})}{ \e^{n\xi_2}} & \cdots &\lambda_{N-1}- \frac{1}{2 \e^{2n\xi_{N-1}}}&-\frac{\cosh (n\xi_N)}{\e^{n\xi_{N-1}}}\\
	\nm
	\frac{\sinh (n\xi_N)}{ \e^{n\xi_1}} &  \frac{\sinh (n\xi_N)}{ \e^{n\xi_2}}& \cdots & \frac{\sinh (n\xi_N)}{ \e^{n\xi_{N-1}}}& 	\lambda_{N}- \frac{1}{2 \e^{2n\xi_N}}
	\end{bmatrix}.
	\end{split}
	\end{equation}
Furthermore, we define $n$-order odd \textit{GPM} $\mathbf{M}_{N,s}^{(n)}$ as follows,
	\begin{equation}\label{MNs}
	\begin{split}
	\mathbf{M}_{N,s}^{(n)}:=
	\begin{bmatrix}
	\lambda_{1}+ \frac{1}{2 \e^{2n\xi_1}} & -\frac{\sinh (n\xi_2)}{\e^{n\xi_1}} &  \cdots & -\frac{\sinh (n\xi_{N-1})}{\e^{n\xi_1}}& -\frac{\sinh (n\xi_{N})}{\e^{n\xi_1}} \\
	\nm
	\frac{\cosh (n\xi_2)}{ \e^{n\xi_1}} & 	\lambda_{2}+ \frac{1}{2 \e^{2n\xi_2}}& \cdots & -\frac{\sinh (n\xi_{N-1})}{\e^{n\xi_2}}& -\frac{\sinh (n\xi_N)}{\e^{n\xi_2}}\\
	\nm
	\vdots & \vdots &\ddots& \vdots &\vdots \\
	\nm
	\frac{\cosh (n\xi_{N-1})}{ \e^{n\xi_1}} &  \frac{\cosh (n\xi_{N-1})}{ \e^{n\xi_2}} & \cdots &\lambda_{N-1}+ \frac{1}{2 \e^{2n\xi_{N-1}}}&-\frac{\sinh (n\xi_N)}{\e^{n\xi_{N-1}}}\\
	\nm
	\frac{\cosh (n\xi_N)}{ \e^{n\xi_1}} &  \frac{\cosh (n\xi_N)}{ \e^{n\xi_2}}& \cdots & \frac{\cosh (n\xi_N)}{ \e^{n\xi_{N-1}}}& 	\lambda_{N}+ \frac{1}{2 \e^{2n\xi_N}}
	\end{bmatrix}.
	\end{split}
	\end{equation}

Our main result in this subsection is the following.

\begin{thm}\label{thm41}
	Let the multi-layered confocal ellipses $A=\cup_{k=1}^NA_k$ be given by \eqref{eq:aj}--\eqref{interface}. Assume that the background electrical potential $H$ admits the following multipolar expansion:
	\begin{equation}\label{eq:defHc}
	H= \sum_{n=1}^{+\infty} a_{n,c}\cosh (n\xi) \cos (n\eta),
	\end{equation}
	where $a_{n,c}$, $n=1,2,\ldots,$  are arbitrary constants.
	Then, the solution of \eqref{potential_matrix} is given by
	\[
	\phi_k=\sum_{n=1}^{+\infty}na_{n,c}\beta_n^{c,k}\bm{e}_k^T\(\mathbf{M}^{(n)}_{N,c}\)^{-1}\bm{s},
	\]
	provided that the corresponding $n$-order even \textit{GPM} $\mathbf{M}_{N,c}^{(n)}$  is invertible,
\end{thm}
\begin{proof}[\bf Proof]

	From \eqref{S-ellipse-cos}, \eqref{dS-ellipse-cos} and \eqref{K-ellipse}, if $\bm\phi$ is given by
	\begin{equation}\label{PHIc3}
	\bm\phi = \sum_{n=1}^{+\infty} (\phi^n_1\beta_n^{c,1},\phi^n_2\beta_n^{c,2},\ldots,\phi^n_N\beta_n^{c,N})^T,
	\end{equation}
	then the integral equations \eqref{potential_matrix} are equivalent to
	\[
	\left\{
	\begin{aligned}
	& \(\lambda_{1}- \frac{1}{2 \e^{2n\xi_1}}\) \phi^n_1-\frac{1}{\e^{n\xi_1}}\sum_{k=2}^{N}\phi^n_k\cosh (n\xi_k)=na_{n,c}\sinh (n\xi_1),\\
	&\sinh (n\xi_l)\sum_{k=1}^{l-1}\phi^n_k\frac{1}{ \e^{n\xi_k}}+\(\lambda_{l}-\frac{1}{2 \e^{2n\xi_l}}\)\phi^n_l-\frac{1}{\e^{n\xi_l}}\sum_{k=l+1}^{N}\phi^n_k\cosh (n\xi_k)=na_{n,c}\sinh (n\xi_l), l=2,3,\ldots,N-1,\\
	&\sinh (n\xi_N)\sum_{k=1}^{N-1}\phi^n_k\frac{1}{ \e^{n\xi_k}}+\(\lambda_{N}-\frac{1}{2 \e^{2n\xi_N}}\)\phi^n_N=na_{n,c}\sinh (n\xi_N).
	\end{aligned}
	\right.
	\]
	Therefore,
	we can obtain that
	\[
	\mathbf{M}_{N,c}^{(n)}\((\phi^n_1,\phi^n_2,\ldots,\phi^n_N)^T\)=na_{n,c}\bm{s},
	\]
	which implies that
	\[
	\phi_k=\sum_{n=1}^{+\infty}na_{n,c}\beta_n^{c,k}\bm{e}_k^T\(\mathbf{M}_{N,c}^{(n)}\)^{-1}\bm{s}.
	\]
	The proof is complete.
\end{proof}

As an immediate application of the above theorem we obtain the following explicit form of the perturbed electric potential in entire two-dimensional space $\RR^2$.

\begin{thm}\label{th:42}
	Suppose $u$ is the solution to the conductivity transmission problem \eqref{conductvtA&AC}--\eqref{transmi_cdtion} with $A$ being the multi-layered confocal ellipses given by \eqref{eq:aj}--\eqref{interface}. Let $H$ be given by \eqref{eq:defHc}.
	If $\mathbf{M}_{N,c}^{(n)}$ is invertible, then  the problem \eqref{cductvt_transmi_prblm} is uniquely solvable with the solution given by the following formula:
	\[
		u-H=-\sum_{n=1}^{+\infty}a_{n,c}\cos (n\eta)\(\cosh(n\xi)\bm{e}_{1:l}^T +\frac{1}{\e^{n\xi}}\bm{e}_{l+1:N}^T \)\mathbb{F}^{(n)}_{N,l,c}\(\mathbf{M}^{(n)}_{N,c}\)^{-1}\bm{s}, \quad \mbox{in } A_l,\; l =0,1,\ldots,N,
	\]
	where $\bm{e}_{i:j} = \sum_{k=i}^{j}\bm{e}_k$ if $i\leqslant j$, and  $\bm{e}_{i:j} =0$ if $i>j$, and
	\[
	\mathbb{F}^{(n)}_{N,l,c}:= \begin{bmatrix}
	\e^{-n\xi_1}&\cdots & 0 & 0 &\cdots& 0 \\
	\vdots&\ddots & \vdots & \vdots &\ddots & \vdots\\
	0&\cdots & \e^{-n\xi_l} & 0 &\cdots& 0 \\
	0&\cdots & 0 & \cosh(n\xi_{l+1}) &\cdots& 0\\
	\vdots & \ddots & \vdots & \vdots& \ddots & \vdots\\
	0&\cdots & 0 &
	0&\cdots& \cosh(n\xi_N)\\
	\end{bmatrix}.
	\]
\end{thm}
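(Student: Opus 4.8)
The plan is to start from the single-layer representation \eqref{potential_sol}, namely $u = H + \sum_{k=1}^{N}\Scal_{\Gamma_k}[\phi_k]$, and to substitute the densities $\phi_k$ already computed in Theorem \ref{thm41}. Writing $\bm{w}^{(n)} := \(\mathbf{M}_{N,c}^{(n)}\)^{-1}\bm{s}$ and $w_k^{(n)} := \bm{e}_k^T\bm{w}^{(n)}$ for its $k$-th entry, Theorem \ref{thm41} gives $\phi_k = \sum_{n=1}^{+\infty} na_{n,c}\,w_k^{(n)}\,\beta_n^{c,k}$, so that $u - H = \sum_{k=1}^{N}\sum_{n} na_{n,c}\,w_k^{(n)}\,\Scal_{\Gamma_k}[\beta_n^{c,k}]$. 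Everything then reduces to evaluating $\Scal_{\Gamma_k}[\beta_n^{c,k}]$ at a point of the layer $A_l$ by means of the closed form \eqref{S-ellipse-cos}.

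The key step is the interior/exterior case split dictated by \eqref{S-ellipse-cos}. Since the interfaces are nested with $\xi_1 > \xi_2 > \cdots > \xi_N$, and a point of $A_l$ has coordinate $\xi$ with $\xi_{l+1} < \xi \leqslant \xi_l$, for a fixed interface $\Gamma_k$ one checks: if $k \leqslant l$ then $\xi \leqslant \xi_l \leqslant \xi_k$, so the first (interior) branch applies and yields $-\tfrac{\cosh(n\xi)}{n\e^{n\xi_k}}\cos(n\eta)$; if $k \geqslant l+1$ then $\xi > \xi_{l+1} \geqslant \xi_k$, so the second (exterior) branch applies and yields $-\tfrac{\cosh(n\xi_k)}{n\e^{n\xi}}\cos(n\eta)$. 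The degenerate ranges $l=0$ (all interfaces exterior) and $l=N$ (all interior) are consistent with the convention $\bm{e}_{i:j}=0$ for $i>j$. Substituting and cancelling the factor $n$ gives, in $A_l$,
\[
u - H = -\sum_{n} a_{n,c}\cos(n\eta)\left[\cosh(n\xi)\sum_{k=1}^{l}\frac{w_k^{(n)}}{\e^{n\xi_k}} + \frac{1}{\e^{n\xi}}\sum_{k=l+1}^{N}\cosh(n\xi_k)\,w_k^{(n)}\right].
\]

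The remaining step is a cosmetic repackaging into the stated matrix form. The diagonal matrix $\mathbb{F}^{(n)}_{N,l,c}$ has entries $\e^{-n\xi_k}$ for $k\leqslant l$ and $\cosh(n\xi_k)$ for $k\geqslant l+1$, so the $k$-th component of $\mathbb{F}^{(n)}_{N,l,c}\bm{w}^{(n)}$ is $\e^{-n\xi_k}w_k^{(n)}$ or $\cosh(n\xi_k)w_k^{(n)}$ accordingly; the row selectors $\bm{e}_{1:l}^T$ and $\bm{e}_{l+1:N}^T$ then extract and add exactly the two groups appearing in the bracket, with the scalar prefactors $\cosh(n\xi)$ and $\e^{-n\xi}$ matching. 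This identifies the bracket with $\(\cosh(n\xi)\bm{e}_{1:l}^T + \e^{-n\xi}\bm{e}_{l+1:N}^T\)\mathbb{F}^{(n)}_{N,l,c}\(\mathbf{M}^{(n)}_{N,c}\)^{-1}\bm{s}$, which is the claimed expression. Unique solvability is inherited from the invertibility hypothesis: for each $n$ the finite system $\mathbf{M}_{N,c}^{(n)}\bm{w}^{(n)} = na_{n,c}\bm{s}$ has a unique solution, hence the densities $\phi_k$ and, through \eqref{potential_sol}, the potential $u$ are uniquely determined; the decay $u-H = \Ocal(|x|^{1-d})$ is automatic in $A_0$, where every term is an exterior elliptic harmonic $\e^{-n\xi}\cos(n\eta)$ whose leading ($n=1$) mode behaves like $|x|^{-1}$.

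I expect the main obstacle to be organizational rather than conceptual: one must keep the interior/exterior branch assignment, the index ranges $k\leqslant l$ versus $k\geqslant l+1$, and the edge cases $l\in\{0,N\}$ mutually consistent, and verify that the diagonal matrix $\mathbb{F}^{(n)}_{N,l,c}$ together with the partial-sum selectors $\bm{e}_{1:l}^T,\bm{e}_{l+1:N}^T$ reproduces the piecewise double sum without any sign or index slip. No genuinely hard estimate is involved; the care lies entirely in the bookkeeping induced by the nesting $\xi_1 > \cdots > \xi_N$.
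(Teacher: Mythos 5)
Your proposal is correct and takes essentially the same route as the paper's proof: substitute the densities from Theorem \ref{thm41} into the representation \eqref{potential_sol}, evaluate $\Scal_{\Gamma_k}[\beta_n^{c,k}]$ in $A_l$ via the two branches of \eqref{S-ellipse-cos} (interior branch for $k\leqslant l$, exterior branch for $k\geqslant l+1$), and repackage the resulting partial sums using $\mathbb{F}^{(n)}_{N,l,c}$ and the selectors $\bm{e}_{1:l}^T$, $\bm{e}_{l+1:N}^T$. Your additional remarks on the edge cases $l\in\{0,N\}$ and on unique solvability and decay at infinity merely make explicit what the paper leaves implicit.
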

\begin{proof}[\bf Proof]
	It follows from Theorem \ref{thm41}, \eqref{potential_sol} and \eqref{S-ellipse-cos} that the perturbed electric potential $u-H$ in $A_l$, $l =0,1,\ldots,N$, can be given by
	\[
	\begin{aligned}
	u-H&=\sum_{n=1}^{+\infty}na_{n,c}\sum_{k=1}^{N}\Scal_{\Gamma_k}[\beta_n^{c,k}]\bm{e}_k^T\(\mathbf{M}^{(n)}_{N,c}\)^{-1}\bm{s}\\
	&=-\sum_{n=1}^{+\infty}a_{n,c}\cos (n\eta)\(\cosh(n\xi)\sum_{k=1}^{l}  \frac{1}{\e^{n\xi_k}} +\frac{1}{\e^{n\xi}}\sum_{k=l+1}^{N} \cosh(n\xi_k) \)\bm{e}_k^T\(\mathbf{M}^{(n)}_{N,c}\)^{-1}\bm{s}\\
	&=-\sum_{n=1}^{+\infty}a_{n,c}\cos (n\eta)\(\cosh(n\xi)\bm{e}_{1:l}^T +\frac{1}{\e^{n\xi}}\bm{e}_{l+1:N}^T \)\mathbb{F}^{(n)}_{N,l,c}\(\mathbf{M}^{(n)}_{N,c}\)^{-1}\bm{s}.
	\end{aligned}
	\]
	The proof is complete.
\end{proof}

In a similar manner, we can obtain the following results.

\begin{thm}\label{thm44}
	Let the multi-layered confocal ellipses $A=\cup_{k=1}^NA_k$ be given by \eqref{eq:aj}--\eqref{interface}. Assume that the background electrical potential $H$ admits the following multipolar expansion:
	\begin{equation}\label{eq:defHs}
	H= \sum_{n=1}^{+\infty} a_{n,s}\sinh (n\xi) \sin (n\eta),
	\end{equation}
	where $a_{n,s}$, $n=1,2,\ldots,$  are arbitrary constants.
	Then, the solution of \eqref{potential_matrix} is given by
	\[
	\phi_k=\sum_{n=1}^{+\infty}na_{n,s}\beta_n^{s,k}\bm{e}_k^T\(\mathbf{M}^{(n)}_{N,s}\)^{-1}\bm{c},	
	\]
	provided that the corresponding   $n$-order odd \textit{GPM} $\mathbf{M}_{N,s}^{(n)}$ is invertible.
\end{thm}
%\begin{proof}
%	From \eqref{S-ellipse-sin}, \eqref{dS-ellipse-sin} and \eqref{K-ellipse}, if $\bm\phi$ is given by
%	\begin{equation}\label{PHI3}
%	\bm\phi = \sum_{n=1}^{+\infty} (\phi^n_1\beta_n^{s,1},\phi^n_2\beta_n^{s,2},\ldots,\phi^n_N\beta_n^{s,N})^T,
%	\end{equation}
%	then the integral equations \eqref{potential_matrix} are equivalent to
%	\[
%	\left\{
%	\begin{aligned}
%	& \(\lambda_{1}+ \frac{1}{2 \e^{2n\xi_1}}\) \phi^n_1-\frac{1}{\e^{n\xi_1}}\sum_{k=2}^{N}\phi^n_k\sinh (n\xi_k)=na_{n,s}\cosh (n\xi_1),\\
%	&\cosh (n\xi_l)\sum_{k=1}^{l-1}\phi^n_k\frac{1}{ \e^{n\xi_k}}+\(\lambda_{l}+\frac{1}{2 \e^{2n\xi_l}}\)\phi^n_l-\frac{1}{\e^{n\xi_l}}\sum_{k=l+1}^{N}\phi^n_k\sinh (n\xi_k)=na_{n,s}\cosh (n\xi_l), l=2,3,\ldots,N-1,\\
%	&\cosh (n\xi_N)\sum_{k=1}^{N-1}\phi^n_k\frac{1}{ \e^{n\xi_k}}+\(\lambda_{N}+\frac{1}{2 \e^{2n\xi_N}}\)\phi^n_N=na_{n,s}\cosh (n\xi_N).
%	\end{aligned}
%	\right.
%	\]
%	Therefore,
%	we can obtain that
%	\[
%	\mathbf{M}_{N,s}^{(n)}\((\phi^n_1,\phi^n_2,\ldots,\phi^n_N)^T\)=na_{n,s}\bm{c}.
%	\]
%	Thus, we can deduce that
%	\begin{equation}\label{PHINK}
%	\phi_k=\sum_{n=1}^{+\infty}na_{n,s}\beta_n^{s,k}\bm{e}_k^T\(\mathbf{M}_{N,s}^{(n)}\)^{-1}\bm{c}.
%	\end{equation}
%	The proof is complete.
%\end{proof}

%As an immediate application of the above theorem we obtain the following explicit form of the perturbed electric potential in entire two-dimensional space $\RR^2$.

\begin{thm}\label{th:45}
	Suppose $u$ is the solution to the conductivity transmission problem \eqref{conductvtA&AC}--\eqref{transmi_cdtion} with $A$ being the multi-layered confocal ellipses given by \eqref{eq:aj}--\eqref{interface}. Let $H$ be given by \eqref{eq:defHs}.
	If $\mathbf{M}_{N,s}^{(n)}$ is invertible, then  the problem \eqref{cductvt_transmi_prblm} is uniquely solvable with the solution given by the following formula:
	\[
	u-H=-\sum_{n=1}^{+\infty}a_{n,s}\sin (n\eta)\(\sinh(n\xi)\bm{e}_{1:l}^T +\frac{1}{\e^{n\xi}}\bm{e}_{l+1:N}^T \)\mathbb{F}^{(n)}_{N,l,s}\(\mathbf{M}^{(n)}_{N,s}\)^{-1}\bm{c}, \quad \mbox{in } A_l,\; l =0,1,\ldots,N,
	\]
	where
	\[
	\mathbb{F}^{(n)}_{N,l,s}:= \begin{bmatrix}
	\e^{-n\xi_1}&\cdots & 0 & 0 &\cdots& 0 \\
	\vdots&\ddots & \vdots & \vdots &\ddots & \vdots\\
	0&\cdots & \e^{-n\xi_l} & 0 &\cdots& 0 \\
	0&\cdots & 0 & \sinh(n\xi_{l+1}) &\cdots& 0\\
	\vdots & \ddots & \vdots & \vdots& \ddots & \vdots\\
	0&\cdots & 0 &
	0&\cdots& \sinh(n\xi_N)\\
	\end{bmatrix}.
	\]
\end{thm}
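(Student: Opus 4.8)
The plan is to mirror the proof of Theorem~\ref{th:42}, substituting the odd single-layer potential formula \eqref{S-ellipse-sin} for its even counterpart \eqref{S-ellipse-cos}. First I would apply Theorem~\ref{thm44}, which under the stated invertibility of $\mathbf{M}^{(n)}_{N,s}$ yields the densities $\phi_k=\sum_{n=1}^{+\infty}na_{n,s}\beta_n^{s,k}\bm{e}_k^T(\mathbf{M}^{(n)}_{N,s})^{-1}\bm{c}$, and then insert these into the representation \eqref{potential_sol}. This gives $u-H=\sum_{n=1}^{+\infty}na_{n,s}\sum_{k=1}^{N}\Scal_{\Gamma_k}[\beta_n^{s,k}]\,\bm{e}_k^T(\mathbf{M}^{(n)}_{N,s})^{-1}\bm{c}$, reducing everything to evaluating the individual single-layer potentials layer by layer.

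The core step is to evaluate $\Scal_{\Gamma_k}[\beta_n^{s,k}](x)$ for $x$ in a fixed layer $A_l$. Since the elliptic radii obey $\xi_1>\xi_2>\cdots>\xi_N$ by \eqref{eq:aj}, a point $x\in A_l$ has coordinate $\xi\in(\xi_{l+1},\xi_l]$, so the interfaces split into two families: for $k\leqslant l$ we have $\xi\leqslant\xi_k$ and the first branch of \eqref{S-ellipse-sin} gives $-\sinh(n\xi)\,\e^{-n\xi_k}\sin(n\eta)/n$, whereas for $k\geqslant l+1$ we have $\xi>\xi_k$ and the second branch gives $-\sinh(n\xi_k)\,\e^{-n\xi}\sin(n\eta)/n$. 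This is precisely the even-case dichotomy with every $\cosh$ replaced by $\sinh$.

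Substituting these values into the sum and factoring out $\sin(n\eta)$ — the factor $1/n$ from \eqref{S-ellipse-sin} cancelling the $n$ carried by the densities — the $k\leqslant l$ terms acquire the weight $\sinh(n\xi)\,\e^{-n\xi_k}$ while the $k\geqslant l+1$ terms acquire $\e^{-n\xi}\sinh(n\xi_k)$. I would then collect the scalar weights $\e^{-n\xi_k}$ (for $k\leqslant l$) and $\sinh(n\xi_k)$ (for $k\geqslant l+1$) into the diagonal matrix $\mathbb{F}^{(n)}_{N,l,s}$, and recognize the two prefactors $\sinh(n\xi)$ and $\e^{-n\xi}$ as the coefficients of the row vectors $\bm{e}_{1:l}^T$ and $\bm{e}_{l+1:N}^T$ respectively. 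This reassembles exactly the claimed formula, with unique solvability inherited from the invertibility of $\mathbf{M}^{(n)}_{N,s}$ as in Theorem~\ref{th:42}.

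The work here is organizational rather than analytical: the one point requiring care is tracking, as $l$ runs over $0,1,\ldots,N$, which interfaces are interior and which are exterior to the evaluation point, together with the degenerate endpoints $l=0$ (where $\bm{e}_{1:0}=0$) and $l=N$ (where $\bm{e}_{N+1:N}=0$). Once the two-branch structure of \eqref{S-ellipse-sin} is seen to be faithfully encoded by the block structure of $\mathbb{F}^{(n)}_{N,l,s}$, the identity follows directly.
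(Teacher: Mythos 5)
Your proposal is correct and follows exactly the route the paper intends: the paper omits the proof of Theorem \ref{th:45}, stating only that it follows ``in a similar manner'' from the proof of Theorem \ref{th:42}, and your argument reproduces that proof verbatim with \eqref{S-ellipse-sin} and Theorem \ref{thm44} in place of \eqref{S-ellipse-cos} and Theorem \ref{thm41}, including the cancellation of the factor $n$, the $k\leqslant l$ versus $k\geqslant l+1$ branch dichotomy for $x\in A_l$, and the degenerate cases $l=0$ and $l=N$.
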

%\begin{proof}
%		It then follows from Theorem \ref{thm44}, \eqref{potential_sol} and \eqref{S-ellipse-sin} that the perturbed electric potential $u-H$ in $A_l$, $l =0,1,\ldots,N$, can be given by
%	\[
%	\begin{aligned}
%	u-H&=\sum_{n=1}^{+\infty}na_{n,s}\sum_{k=1}^{N}\Scal_{\Gamma_k}[\beta_n^{s,k}]\bm{e}_k^T\(\mathbf{M}^{(n)}_{N,s}\)^{-1}\bm{c}\\
%	&=-\sum_{n=1}^{+\infty}a_{n,s}\sin (n\eta)\(\sinh(n\xi)\sum_{k=1}^{l}  \frac{1}{\e^{n\xi_k}} +\frac{1}{\e^{n\xi}}\sum_{k=l+1}^{N} \sinh(n\xi_k) \)\bm{e}_k^T\(\mathbf{M}^{(n)}_{N,s}\)^{-1}\bm{c}\\
%	&=-\sum_{n=1}^{+\infty}a_{n,s}\sin (n\eta)\(\sinh(n\xi)\bm{e}_{1:l}^T +\frac{1}{\e^{n\xi}}\bm{e}_{l+1:N}^T \)\mathbb{F}^{(n)}_{N,l,s}\(\mathbf{M}^{(n)}_{Ns}\)^{-1}\bm{c}.
%	\end{aligned}
%	\]
%	The proof is complete.
%\end{proof}

\begin{rem}\label{rem41}
	One can see that the problem \eqref{cductvt_transmi_prblm} is solvable if and only if the corresponding   $n$-order  GPM $\mathbf{M}_{N,c}^{(n)}$ and $\mathbf{M}_{N,s}^{(n)}$  is invertible.
	If $\Re\sigma>0$,  then $\nabla\cdot \sigma(x)\nabla$ is an elliptic PDO (Partial Differential Operator), which guarantees the well-posedness of \eqref{cductvt_transmi_prblm} and hence the invertibility of $\mathbf{M}_{N,c}^{(n)}$ and $\mathbf{M}_{N,s}^{(n)}$ . On the other hand, if for some layers the materials are negative, the ellipticity of $\nabla\cdot \sigma(x)\nabla$ might be broken. Nevertheless, if for those negative materials, say $\sigma_k$, one has that $\Im\sigma_k\neq 0$, then the invertibility of $\mathbf{M}_{N,c}^{(n)}$ and $\mathbf{M}_{N,s}^{(n)}$ can also be guaranteed. This can be seen in our subsequent analysis.
\end{rem}
%\begin{rem}\label{rem42}
%%	By the above analysis,	we establish a connection between the eigenvalues $-\lambda$ of NP operator $\mathbb{K}_N^*$  and the eigenvalues $\beta$ of NP matrix $\(\mathbf{K}_N^{(n)}\)^T$:
%%	\begin{equation}\label{betaandlambda}
%%	\beta=
%%	\left\{
%%	\begin{array}{ll}
%%	-2\lambda, &   \mbox{if } \quad d=2,\\
%%	-\frac{2n+1}{n}\lambda+\frac{1}{2n}, &   \mbox{if } \quad d=3.
%%	\end{array}
%%	\right.
%%	\end{equation}
%	From the properties of $\mathbb{K}_N^*$ obtained in section \ref{sec3}, we can conclude that the matrix $\(\mathbf{K}_N^{(n)}\)^T$ has $N$ real eigenvalues lying in the interval $[-1,1]$ in $\RR^2$ and lying in the interval $[-1,\frac{n+1}{n}]$ in $\RR^3$, respectively, which perfectly coincide with the results in $\RR^2$ and of the uniform background field (dipole mode $n=1$) in $\RR^3$ provided in \cite{DFArxiv}.  Each eigenvalue is associated with one type of plasmon mode. This means that they may appear as in-gap localized surface plasmon modes once the material is truncated to form a layered structure. In other words, truncating the material will result in mode splitting.
%\end{rem}

\section{An algebraic framework for plasmon hybridization}\label{sec5}
In this section, we shall establish an algebraic framework for polariton hybridization. We mention that in order to analyze the plasmon resonance phenomena, one should consider the situation that the material
is not regular.
\subsection{Plasmon resonance}
As mentioned above, plasmon resonance is usually associated with some eigenvalue problem generated by the PDE system. According to our earlier discussion in Remark \ref{rem41}, if the parameters $\sigma_k$, $k=1, 2, \ldots, N$,  are all positive real-valued,  the elastostatic system the conductivity transmission problem \eqref{conductvtA&AC}--\eqref{transmi_cdtion} has only trivial solution if ${H}=0$. We seek non-trivial solutions to \eqref{conductvtA&AC}--\eqref{transmi_cdtion} when the conductivity in \eqref{eq:vepdef01} is allowed to be negative valued, i.e.,
\begin{equation}\label{negmat}
\sigma_{1}=-\sigma^*+\mathrm{i}\delta\quad \mbox{ with } \sigma^*>0,
\end{equation}
where $\delta > 0$ is some small lossy parameter. $\mathrm{i} = \sqrt{-1}$. Then, $\lambda$ given in \eqref{eq:deflamb01} can be rewritten by
\[
\lambda=\frac{\sigma_{0}-\sigma^*+\mathrm{i}\delta}{2(-\sigma_{0}-\sigma^*+\mathrm{i}\delta)}.
\]

By using Theorems \ref{th:42} and \ref{th:45}, we can oatain the exact  matrix representation of the NP operator in multi-layered confocal ellipses.
\begin{prop}\label{cor51}
	Suppose $u$ is the solution to the conductivity transmission problem \eqref{conductvtA&AC}--\eqref{transmi_cdtion} with  the conductivity $\sigma $ given by \eqref{eq:vepdef01} and $A$ being the multi-layered confocal ellipses given by \eqref{eq:aj}--\eqref{interface}.
	\begin{itemize}
	\item[\bf(i)]Let $H$ be given by \eqref{eq:defHc}.
	Define the corresponding  $n$-order even {NP matrix } $\(\mathbf{K}_{N,c}^{(n)}\)^T$ as follows:
	\begin{equation}\label{KN(n)T3}
	\(\mathbf{K}_{N,c}^{(n)}\)^T:=
	\begin{bmatrix}
	-\frac{1}{2 \e^{2n\xi_1}} & -\frac{\cosh (n\xi_2)}{\e^{n\xi_1}} &  \cdots & -\frac{\cosh (n\xi_{N-1})}{\e^{n\xi_1}}& -\frac{\cosh (n\xi_{N})}{\e^{n\xi_1}} \\
	\nm
	-\frac{\sinh (n\xi_2)}{ \e^{n\xi_1}} &  \frac{1}{2 \e^{2n\xi_2}}& \cdots & \frac{\cosh (n\xi_{N-1})}{\e^{n\xi_2}}& \frac{\cosh (n\xi_N)}{\e^{n\xi_2}}\\
	\nm
	\vdots & \vdots &\ddots& \vdots &\vdots \\
	\nm
	(-1)^{N-2}\frac{\sinh (n\xi_{N-1})}{ \e^{n\xi_1}} &  (-1)^{N-2}\frac{\sinh (n\xi_{N-1})}{ \e^{n\xi_2}} & \cdots & (-1)^{N-1}\frac{1}{2 \e^{2n\xi_{N-1}}}&(-1)^{N-1}\frac{\cosh (n\xi_N)}{\e^{n\xi_{N-1}}}\\
	\nm
	(-1)^{N-1}\frac{\sinh (n\xi_N)}{ \e^{n\xi_1}} &  (-1)^{N-1}\frac{\sinh (n\xi_N)}{ \e^{n\xi_2}}& \cdots & (-1)^{N-1}\frac{\sinh (n\xi_N)}{ \e^{n\xi_{N-1}}}& 	 (-1)^{N}\frac{1}{2 \e^{2n\xi_N}}
	\end{bmatrix}.
	\end{equation}
	If $-\lambda$ is not the eigenvalue of  $\mathbf{K}_{N,c}^{(n)}$,
	then  the problem \eqref{cductvt_transmi_prblm} is uniquely solvable with the solution given by the following formula:
	\begin{equation}\label{eq:purbmn03}
	u-H=\sum_{n=1}^{+\infty}a_{n,c}\cos (n\eta)\(\cosh(n\xi)\bm{e}_{1:l}^T +\frac{1}{\e^{n\xi}}\bm{e}_{l+1:N}^T \)\mathbb{F}^{(n)}_{N,l,c} \(-\lambda\mathbf{I}_N-\(\mathbf{K}_{N,c}^{(n)}\)^T\)^{-1}\tilde{\bm{s}},
	\end{equation}
	in  $A_l,\; l =0,1,\ldots,N,$ where $\tilde{\bm{s}} = \(\sinh (n\xi_1),-\sinh (n\xi_2),\ldots,(-1)^{N-1}\sinh (n\xi_N)\)^T$, $\mathbf{I}_N$ is the identity matrix.
	\item[\bf(ii)] Let $H$ be given by \eqref{eq:defHs}.
	Define the corresponding  $n$-order odd {NP matrix } $\(\mathbf{K}_{N,s}^{(n)}\)^T$ as follows:
	%	\[
	%	\begin{split}
	%	\begin{bmatrix}
	% -\frac{1}{2 \e^{2n\xi_1}} & -\frac{\sinh (n\xi_2)}{\e^{n\xi_1}} &  \cdots & -\frac{\sinh (n\xi_{N-1})}{\e^{n\xi_1}}& -\frac{\sinh (n\xi_{N})}{\e^{n\xi_1}} \\
	%	-\frac{\cosh (n\xi_2)}{ \e^{n\xi_1}} & 	 \frac{1}{2 \e^{2n\xi_2}}& \cdots & \frac{\sinh (n\xi_{N-1})}{\e^{n\xi_2}}& \frac{\sinh (n\xi_N)}{\e^{n\xi_2}}\\
	%	\vdots & \vdots &\ddots& \vdots &\vdots \\
	%	(-1)^{N-2}\frac{\cosh (n\xi_{N-1})}{ \e^{n\xi_1}} &  (-1)^{N-2}\frac{\cosh (n\xi_{N-1})}{ \e^{n\xi_2}} & \cdots &(-1)^{N-1} \frac{1}{2 \e^{2n\xi_{N-1}}}&(-1)^{N-1}\frac{\sinh (n\xi_N)}{\e^{n\xi_{N-1}}}\\
	%	(-1)^{N-1}\frac{\cosh (n\xi_N)}{ \e^{n\xi_1}} &  (-1)^{N-1}\frac{\cosh (n\xi_N)}{ \e^{n\xi_2}}& \cdots & (-1)^{N-1}\frac{\cosh (n\xi_N)}{ \e^{n\xi_{N-1}}}& 	(-1)^{N}\frac{1}{2 \e^{2n\xi_N}}
	%	\end{bmatrix}.
	%	\end{split}
	%	\]
	
	\begin{equation}\label{KN(n)Ts3}
	\(\mathbf{K}_{N,s}^{(n)}\)^T:=
	\begin{bmatrix}
	\frac{1}{2 \e^{2n\xi_1}} & -\frac{\sinh (n\xi_2)}{\e^{n\xi_1}} &  \cdots & -\frac{\sinh (n\xi_{N-1})}{\e^{n\xi_1}}& -\frac{\sinh (n\xi_{N})}{\e^{n\xi_1}} \\
	\nm
	-\frac{\cosh (n\xi_2)}{ \e^{n\xi_1}} & 	 -\frac{1}{2 \e^{2n\xi_2}}& \cdots & \frac{\sinh (n\xi_{N-1})}{\e^{n\xi_2}}& \frac{\sinh (n\xi_N)}{\e^{n\xi_2}}\\
	\nm
	\vdots & \vdots &\ddots& \vdots &\vdots \\
	\nm
	(-1)^{N-2}\frac{\cosh (n\xi_{N-1})}{ \e^{n\xi_1}} &  (-1)^{N-2}\frac{\cosh (n\xi_{N-1})}{ \e^{n\xi_2}} & \cdots &(-1)^{N-2} \frac{1}{2 \e^{2n\xi_{N-1}}}&(-1)^{N-1}\frac{\sinh (n\xi_N)}{\e^{n\xi_{N-1}}}\\
	\nm
	(-1)^{N-1}\frac{\cosh (n\xi_N)}{ \e^{n\xi_1}} &  (-1)^{N-1}\frac{\cosh (n\xi_N)}{ \e^{n\xi_2}}& \cdots & (-1)^{N-1}\frac{\cosh (n\xi_N)}{ \e^{n\xi_{N-1}}}& 	(-1)^{N-1}\frac{1}{2 \e^{2n\xi_N}}
	\end{bmatrix}.
	\end{equation}
	If $-\lambda$ is not the eigenvalue of  $\mathbf{K}_{N,s}^{(n)}$,
	then  the problem \eqref{cductvt_transmi_prblm} is uniquely solvable with the solution given by the following formula:
	\begin{equation}\label{eq:purbmn04}
	u-H=\sum_{n=1}^{+\infty}a_{n,s}\sin (n\eta)\(\sinh(n\xi)\bm{e}_{1:l}^T +\frac{1}{\e^{n\xi}}\bm{e}_{l+1:N}^T \)\mathbb{F}^{(n)}_{N,l,s} \(-\lambda\mathbf{I}_N-\(\mathbf{K}_{N,s}^{(n)}\)^T\)^{-1}\tilde{\bm{c}},
	\end{equation}
	in  $A_l,\; l =0,1,\ldots,N,$ where $\tilde{\bm{c}} = \(\cosh (n\xi_1),-\cosh (n\xi_2),\ldots,(-1)^{N-1}\cosh (n\xi_N)\)^T$.
	\end{itemize}	
\end{prop}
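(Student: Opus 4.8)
The plan is to deduce both assertions directly from the field representations of Theorems \ref{th:42} and \ref{th:45}, by recognizing that the GPM and the NP matrix differ only by a diagonal sign rescaling induced by the alternating structure \eqref{eq:vepdef01}. Concretely, I would introduce the diagonal involution $D_N := \diag(1,-1,1,\ldots,(-1)^{N-1})$ on $\RR^N$, which is precisely the discrete analogue of the row-flip operator used to pass from \eqref{potential_matrix} to \eqref{potential_matrix1}, and record at the outset the elementary facts $D_N^{-1}=D_N$, $D_N\bm{s}=\tilde{\bm{s}}$ and $D_N\bm{c}=\tilde{\bm{c}}$.

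The key algebraic identity to establish is
\[
\mathbf{M}_{N,c}^{(n)} = D_N\(\lambda\mathbf{I}_N + \(\mathbf{K}_{N,c}^{(n)}\)^T\),\qquad \mathbf{M}_{N,s}^{(n)} = D_N\(\lambda\mathbf{I}_N + \(\mathbf{K}_{N,s}^{(n)}\)^T\).
\]
I would verify these entrywise using $\lambda_k=(-1)^{k-1}\lambda$, which follows from \eqref{lamdk} under \eqref{eq:vepdef01}. On the diagonal, multiplying the $k$-th row of $\lambda\mathbf{I}_N+(\mathbf{K}_{N,c}^{(n)})^T$ by $(-1)^{k-1}$ turns $\lambda+(-1)^k\tfrac{1}{2\e^{2n\xi_k}}$ into $(-1)^{k-1}\lambda-\tfrac{1}{2\e^{2n\xi_k}}=\lambda_k-\tfrac{1}{2\e^{2n\xi_k}}$; for an above-diagonal entry ($i<j$) the factor $(-1)^{i-1}$ converts the NP entry $(-1)^i\tfrac{\cosh(n\xi_j)}{\e^{n\xi_i}}$ into $-\tfrac{\cosh(n\xi_j)}{\e^{n\xi_i}}$; and for a below-diagonal entry ($i>j$) it converts $(-1)^{i-1}\tfrac{\sinh(n\xi_i)}{\e^{n\xi_j}}$ into $\tfrac{\sinh(n\xi_i)}{\e^{n\xi_j}}$, matching \eqref{MNc} exactly. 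The sine case is identical after the interchange $\cosh\leftrightarrow\sinh$ in the off-diagonal slots and the sign flip of the diagonal $\tfrac{1}{2\e^{2n\xi_k}}$ terms. This entrywise matching of the row-flip sign against the position-dependent signs already built into \eqref{KN(n)T3}--\eqref{KN(n)Ts3} is the only delicate point, and I expect it to be the main (though purely computational) obstacle.

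Granting the identity, the remainder is linear algebra. Since $\det D_N=\pm1$, the matrix $\mathbf{M}_{N,c}^{(n)}$ is invertible if and only if $\lambda\mathbf{I}_N+(\mathbf{K}_{N,c}^{(n)})^T$ is, which holds precisely when $-\lambda$ is not an eigenvalue of $(\mathbf{K}_{N,c}^{(n)})^T$, equivalently of $\mathbf{K}_{N,c}^{(n)}$ since a matrix and its transpose share the same spectrum; thus the hypothesis of part (i) coincides with the invertibility hypothesis of Theorem \ref{th:42}. Taking inverses and using $D_N^{-1}=D_N$ gives
\[
(\mathbf{M}_{N,c}^{(n)})^{-1}\bm{s} = \(\lambda\mathbf{I}_N + (\mathbf{K}_{N,c}^{(n)})^T\)^{-1}D_N\bm{s} = \(\lambda\mathbf{I}_N + (\mathbf{K}_{N,c}^{(n)})^T\)^{-1}\tilde{\bm{s}},
\]
whence $-(\mathbf{M}_{N,c}^{(n)})^{-1}\bm{s} = (-\lambda\mathbf{I}_N - (\mathbf{K}_{N,c}^{(n)})^T)^{-1}\tilde{\bm{s}}$ by $-A^{-1}=(-A)^{-1}$. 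Substituting this into the representation of Theorem \ref{th:42} absorbs the overall minus sign there and yields \eqref{eq:purbmn03}. Part (ii) follows verbatim from Theorem \ref{th:45}, the second matrix identity above, and $D_N\bm{c}=\tilde{\bm{c}}$, giving \eqref{eq:purbmn04}.
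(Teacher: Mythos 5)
Your proposal is correct and is essentially the paper's own (largely implicit) argument: the paper derives Proposition \ref{cor51} directly from Theorems \ref{th:42} and \ref{th:45}, with the diagonal sign flip you call $D_N$ being exactly the finite-dimensional version of the alternating-sign operator the paper already applied to pass from \eqref{potential_matrix} to \eqref{potential_matrix1}. Your entrywise verification of $\mathbf{M}_{N,f}^{(n)} = D_N\bigl(\lambda\mathbf{I}_N+(\mathbf{K}_{N,f}^{(n)})^T\bigr)$, $f=c,s$, checks out and simply makes explicit the step the paper leaves to the reader.
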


%\begin{rem}\label{rem42}
%	%	By the above analysis,	we establish a connection between the eigenvalues $-\lambda$ of NP operator $\mathbb{K}_N^*$  and the eigenvalues $\beta$ of NP matrix $\(\mathbf{K}_N^{(n)}\)^T$:
%	%	\begin{equation}\label{betaandlambda}
%	%	\beta=
%	%	\left\{
%	%	\begin{array}{ll}
%	%	-2\lambda, &   \mbox{if } \quad d=2,\\
%	%	-\frac{2n+1}{n}\lambda+\frac{1}{2n}, &   \mbox{if } \quad d=3.
%	%	\end{array}
%	%	\right.
%	%	\end{equation}
%	From the properties of $\mathbb{K}_N^*$ obtained in section \ref{sec3}, we can conclude that the matrice $\(\mathbf{K}_{N,c}^{(n)}\)^T$ and $\(\mathbf{K}_{N,s}^{(n)}\)^T$  has $N$ real eigenvalues lying in the interval $[-1/2,1/2]$, respectively,   Each eigenvalue is associated with one type of plasmon mode. This means that they may appear as in-gap localized surface plasmon modes once the material is truncated to form a layered structure. In other words, truncating the material will result in mode splitting.
%\end{rem}

Next, we  give the definition of the plasmon resonance mode for our subsequent study.
\begin{defn}\label{df:def01}
	Consider the problem \eqref{cductvt_transmi_prblm} for $H=0$, associated with the $N$-layer structure $A$, where the material configuration is described in \eqref{eq:vepdef01} and \eqref{negmat}. Plasmon resonance mode occurs when there exists material configuration such that, in the limiting case as the loss parameter $\delta$ goes to zero, the corresponding PDO $\nabla\cdot \sigma(x)\nabla$  possesses non-trivial kernel.
\end{defn}
\begin{thm}\label{th:resonancedefth01}
	Suppose $ u$ is the solution to the conductivity transmission problem \eqref{conductvtA&AC}--\eqref{transmi_cdtion}, with the parameter $\sigma$ be given in \eqref{eq:vepdef01} and \eqref{negmat}. If there holds
	$$
	\lim_{\delta\rightarrow 0} \lambda = \lambda^*,
	$$
	where $-\lambda^*$ is an eigenvalue of $\(\mathbf{K}_{N,c}^{(n)}\)^T$ or $\(\mathbf{K}_{N,s}^{(n)}\)^T$, then the plasmon resonance mode occurs.
\end{thm}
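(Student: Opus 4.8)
The plan is to verify Definition \ref{df:def01} directly: I will show that at the limiting value $\lambda=\lambda^*$ the operator $\nabla\cdot\sigma(x)\nabla$ possesses a nontrivial kernel by producing an explicit element of it. As $\delta\to 0$ the quantity $\lambda$ defined in \eqref{eq:deflamb01} tends to the real number $\lambda^*$, so the limiting problem \eqref{cductvt_transmi_prblm} carries the real transmission parameters $\lambda_k=(-1)^{k-1}\lambda^*$. A potential $u$ belongs to the kernel of the limiting operator exactly when it solves the homogeneous problem, i.e.\ \eqref{cductvt_transmi_prblm} with $H=0$; through the single-layer ansatz \eqref{potential_sol} this is equivalent to the existence of a nontrivial $\bm\phi\in\mathcal{H}^{-1/2}_0$ with $(\lambda^*\mathbb{I}+\mathbb{K}_N^*)\bm\phi=\bm{0}$, that is, to $-\lambda^*$ being an eigenvalue of $\mathbb{K}_N^*$.

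I would then build such a $\bm\phi$ from the prescribed matrix eigenvector. Consider the even case; the odd case is identical after replacing $\beta_n^{c,k}$, $\cos(n\eta)$ and $(\mathbf{K}_{N,c}^{(n)})^T$ by $\beta_n^{s,k}$, $\sin(n\eta)$ and $(\mathbf{K}_{N,s}^{(n)})^T$. Let $\bm{v}=(v_1,\ldots,v_N)^T\neq\bm{0}$ satisfy $(\mathbf{K}_{N,c}^{(n)})^T\bm{v}=-\lambda^*\bm{v}$ and put $\phi_k:=v_k\beta_n^{c,k}$, which lies in $H^{-1/2}_0(\Gamma_k)$ since $\int_{\Gamma_k}\beta_n^{c,k}\,\mathrm{d} s=0$ for $n\geqslant 1$. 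By the spectral formulas \eqref{S-ellipse-cos}, \eqref{dS-ellipse-cos} and \eqref{K-ellipse}, the operator $\mathbb{K}_N^*$ leaves the $n$-th even Fourier sector invariant and is represented there, in the basis $\{\beta_n^{c,k}\}_{k=1}^N$, by the finite matrix $(\mathbf{K}_{N,c}^{(n)})^T$; this is precisely the reduction carried out in the proof of Theorem \ref{thm41} together with the alternating-sign normalization of \eqref{potential_matrix1}--\eqref{NPoperator}. Consequently $\bm\phi=(\phi_1,\ldots,\phi_N)^T$ solves $(\lambda^*\mathbb{I}+\mathbb{K}_N^*)\bm\phi=\bm{0}$, and I set $u:=\sum_{k=1}^N\mathcal{S}_{\Gamma_k}[\phi_k]$.

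It remains to confirm that $u$ is a genuine, nontrivial kernel element. Harmonicity of $u$ off the interfaces and continuity of $u$ across each $\Gamma_k$ hold automatically for single-layer potentials, so the first transmission condition in \eqref{transmi_cdtion} is satisfied; the flux condition is exactly the homogeneous identity $(\lambda^*\mathbb{I}+\mathbb{K}_N^*)\bm\phi=\bm{0}$ just established; and the required decay $u=\mathcal{O}(|x|^{-1})$ at infinity follows from the exterior branch $\xi>\xi_0$ of \eqref{S-ellipse-cos}, which contributes only the decaying profile $\e^{-n\xi}\cos(n\eta)$. For nontriviality I invoke the jump relation \eqref{singlejump}: across $\Gamma_j$ only the $k=j$ summand of $u$ has a discontinuous normal derivative, whence $\big[\partial_{\nu_j}u\big]_{\Gamma_j}=\phi_j$. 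Were $u\equiv 0$ we would obtain $\phi_j=v_j\beta_n^{c,j}\equiv 0$ and thus $v_j=0$ for every $j$, contradicting $\bm{v}\neq\bm{0}$. Hence $u\not\equiv 0$ lies in the kernel of $\nabla\cdot\sigma(x)\nabla$ at $\delta=0$, and by Definition \ref{df:def01} a plasmon resonance mode occurs.

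The step I expect to demand the most care is the identification of $\mathbb{K}_N^*$ with $(\mathbf{K}_{N,c}^{(n)})^T$ on the $n$-th Fourier sector, since it depends on propagating the sign-flip normalization of \eqref{potential_matrix1} consistently into the eigenvalue equation $(\mathbf{K}_{N,c}^{(n)})^T\bm{v}=-\lambda^*\bm{v}$; this bookkeeping is, however, already settled in Section \ref{sec4}, so once the basis $\{\beta_n^{c,k}\}$ is fixed it reduces to a routine verification. A minor two-dimensional caveat is that $\mathcal{S}_{\Gamma_k}$ must be read on the mean-zero space $H^{-1/2}_0(\Gamma_k)$, which is ensured here by the vanishing integral of $\beta_n^{c,k}$.
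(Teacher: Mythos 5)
Your proposal is correct and follows essentially the same route as the paper's proof: both reduce the homogeneous problem, via the Fourier ansatz \eqref{PHIc3} of Theorem \ref{thm41}, to the matrix kernel equation \eqref{eq:notri01}, take a nontrivial solution when $-\lambda^*$ is an eigenvalue, and read off a nontrivial kernel element through $u=\sum_{k=1}^N\Scal_{\Gamma_k}[\phi_k]$. Your additional verifications (mean-zero densities, decay from the exterior branch of \eqref{S-ellipse-cos}, and nontriviality of $u$ via the jump relation \eqref{singlejump}) are details the paper leaves implicit, but they do not change the argument.
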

\begin{proof}[\bf Proof]
	According to Definition \ref{df:def01}, one only need to verify that the corresponding PDO $\nabla\cdot \sigma(x)\nabla $  possesses a non-trivial kernel as $\delta \rightarrow 0$. To this end, let us consider the following problem:
	\begin{equation}\label{eq:lametrivi01}
	\left\{
	\begin{array}{ll}
	\nabla\cdot (\sigma(x)\nabla u) =0, & \mbox{in} \quad \RR^2,\\
	{u}=\Ocal(|x|^{-1}), & |x|\rightarrow \infty.
	\end{array}
	\right.
	\end{equation}
	%Let $\beta_i: =
	%-\frac{2n+1}{n}\lambda_i+\frac{1}{2n}$. By \eqref{eq:vepdef01}--\eqref{eq:Drude1}, we have
	%\[
	%\beta_i=
	%\left\{
	%\begin{array}{ll}
	%\beta, & i \quad \mbox{is odd},\\
	%\frac{1}{n}-\beta, & i \quad \mbox{is even},
	%\end{array}
	%\right.
	%\]
	%where
	%\[
	%\beta: =
	%-\frac{2n+1}{n}\lambda+\frac{1}{2n} = \frac{\frac{n+1}{n}\sigma_{0}+\sigma_{1}}{\sigma_{0}-\sigma_{1}},
	%\]
	By following a similar treatment as the proof of Theorem \ref{thm41}, one can show that
	\[
	\left\{
	\begin{aligned}
	& \(\lambda_{1}- \frac{1}{2 \e^{2n\xi_1}}\) \phi^n_1-\frac{1}{\e^{n\xi_1}}\sum_{k=2}^{N}\phi^n_k\cosh (n\xi_k)=0,\\
	&\sinh (n\xi_l)\sum_{k=1}^{l-1}\phi^n_k\frac{1}{ \e^{n\xi_k}}+\(\lambda_{l}-\frac{1}{2 \e^{2n\xi_l}}\)\phi^n_l-\frac{1}{\e^{n\xi_l}}\sum_{k=l+1}^{N}\phi^n_k\cosh (n\xi_k)=0, l=2,3,\ldots,N-1,\\
	&\sinh (n\xi_N)\sum_{k=1}^{N-1}\phi^n_k\frac{1}{ \e^{n\xi_k}}+\(\lambda_{N}-\frac{1}{2 \e^{2n\xi_N}}\)\phi^n_N=0.
	\end{aligned}
	\right.
	\]
	That is
	\begin{equation}\label{eq:notri01}
	\(-\lambda\mathbf{I}_N-\(\mathbf{K}_{N,c}^{(n)}\)^T\)\bm{\phi}=0,
	\end{equation}
	where $\bm{\phi} = \(\phi^n_1,\phi^n_2,\ldots,\phi^n_N\)$. If $-\lambda^*$ is an eigenvalue of $\(\mathbf{K}_{N,c}^{(n)}\)^T$, by letting $\delta \rightarrow 0$, one has $\left|-\lambda^*\mathbf{I}_N-\(\mathbf{K}_{N,c}^{(n)}\)^T\right|=0$ and thus the homogeneous linear equation \eqref{eq:notri01} has non-trivial solutions. In view of  $u(x)=\sum_{k=1}^{N}\Scal_{\Gamma_k}[\phi_k](x)$ and \eqref{PHIc3}, one immediately has  the problem \eqref{eq:lametrivi01} has non-trivial solutions.

	If $-\lambda^*$ is an eigenvalue of $\(\mathbf{K}_{N,s}^{(n)}\)^T$, one also has  the problem \eqref{eq:lametrivi01} has non-trivial solutions in a  similar manner. The proof is complete.
\end{proof}

At plasmon mode, the resonant field  demonstrates an energy blowup.
Next, similar to \cite{DLZJMPA21}, we give the formal definition of inducing of plasmon resonance.
\begin{defn}\label{df:def02}
	Consider the conductivity transmission problem \eqref{conductvtA&AC}--\eqref{transmi_cdtion} associated with the $N$-layer structure $A$, where the material configuration is described in \eqref{eq:vepdef01}--\eqref{eq:Drude1}.
	Then plasmon resonance is induced if the following condition is fulfilled:
	\[
	\lim_{\delta\rightarrow 0}
	||\nabla({u}-{H})||_{L^2(\RR^d\setminus\overline{A})}=+\infty.
	\]
\end{defn}

\begin{rem}\label{rem51}
	In view of Definition \ref{df:def02} and Corollary \ref{cor51}, the resonance condition is split into even and odd plasmon modes and reads, as $\delta\rightarrow 0$, $\left|-\lambda\mathbf{I}_N-\(\mathbf{K}_{N,c}^{(n)}\)^T\right|=0$ for even and $\left|-\lambda\mathbf{I}_N-\(\mathbf{K}_{N,s}^{(n)}\)^T\right|=0$ for odd modes.
	%The plasmon resonance mode is parallel to the material configurations which make the parameter $-\lambda$ the eigenvalue of $\(\mathbf{K}_{N,c}^{(n)}\)^T$ or $\(\mathbf{K}_{N,s}^{(n)}\)^T$ as $\delta\rightarrow 0$.
	For this reason, we shall focus on the explicit computation of eigenvalues to the matrix $\(\mathbf{K}_{N,c}^{(n)}\)^T$ or $\(\mathbf{K}_{N,s}^{(n)}\)^T$, or the exact formulae to $|\mathbf{M}_{N,c}^{(n)}|$ or $|\mathbf{M}_{N,s}^{(n)}|$. Thus, one can immediately obtain the  eigenvalues of matrix type NP operator $\mathbb{K}_N^*$.
\end{rem}

%\subsection{Resonant fields and energy blowup}
\subsection{An algebraic framework}
In this subsection, we shall derive a precise connection between the plasmon mode and the choice of conductivities in the multi-layered confocal ellipses.
To express the main results more clearly, we will defer the proofs to the subsequent section.
By some straightforward but rather lengthy and tedious calculations, we can obtain the exact formulae to $|\mathbf{M}_{N,c}^{(n)}|$ or $|\mathbf{M}_{N,s}^{(n)}|$ for the structures with small number of layers, say $2\leqslant N\leqslant 5$. By observing these results, we induce the delicate result about the explicit formulae of $|\mathbf{M}_{N,c}^{(n)}|$ or $|\mathbf{M}_{N,s}^{(n)}|$  for all $N\in \mathbb{N}$ in the following theorem.
For the convenience of description, we denote by $\mathbf{i}_{m}$ the multi-index $(i_1,i_2,\ldots,i_{m})$ and $\tau_{\mathbf{i}_{m}}$ its sign given by
\[
\tau_{\mathbf{i}_{m}}:=(-1)^{\sum_{j=1}^{m} i_j}.
\]
Moreover, we denote by $C^{i,m}_{n}$ the set of all combinations of $m$ out $n$, $m\leqslant n$, say e.g., for one combination
\[
(i_1, i_2, \ldots, i_{m})\in C^{i,m}_{n}\quad \mbox{satisfying} \quad
(i+1)\leqslant i_1,i_2,\ldots, i_{m}\leqslant (n+i),
\]  we order them in ascending way, that is, $i_1<i_2< \cdots< i_{m}$.

\begin{thm}\label{thm52}
Let $N\in \mathbb{N}$. Then it holds that
	\begin{equation}\label{cpbi-ktc}
	\left|\mathbf{M}_{N,c}^{(n)}(\lambda)\right|=
	(-1)^{\lfloor \frac{N}{2}\rfloor}\left(\sum\limits_{k=0}^{N}  \frac{\lambda^{N-k}}{2^k}\left(\sum\limits_{\mathbf{i}_{k}\in C_{N}^{0,k}}\tau_{\mathbf{i}_{k}}\e^{ 2 n\sum_{l=1}^k (-1)^{l}\xi_{i_l}}\right)\right),
	\end{equation}
	\begin{equation}\label{cpbi-kts}
	\left|\mathbf{M}_{N,s}^{(n)}(\lambda)\right|=
	(-1)^{\lfloor \frac{N}{2}\rfloor}\left(\sum\limits_{k=0}^{N}  \frac{\lambda^{N-k}}{(-2)^k}\left(\sum\limits_{\mathbf{i}_{k}\in C_{N}^{0,k}}\tau_{\mathbf{i}_{k}}\e^{ 2 n\sum_{l=1}^k (-1)^{l}\xi_{i_l}}\right)\right).
	\end{equation}
\end{thm}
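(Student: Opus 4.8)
The plan is to reduce the determinant of the $\lambda$-pencil $\mathbf{M}_{N,c}^{(n)}(\lambda)$ to a signed sum of principal minors of a fixed ($\lambda$-independent) structured matrix, and then to evaluate those minors in closed form by an induction on their size. First I would split
\[
\mathbf{M}_{N,c}^{(n)}(\lambda) = \lambda\,\mathbf{E}_N + \mathbf{C}_N, \qquad \mathbf{E}_N := \diag\big(1,-1,\ldots,(-1)^{N-1}\big),
\]
where $\mathbf{C}_N := \mathbf{M}_{N,c}^{(n)}(\lambda)\big|_{\lambda=0}$ carries the constant diagonal $-\tfrac12\e^{-2n\xi_j}$ together with the (already $\lambda$-free) off-diagonal entries of $\mathbf{M}_{N,c}^{(n)}$. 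Expanding the determinant multilinearly in the diagonal entries gives the standard identity
\[
\big|\mathbf{M}_{N,c}^{(n)}(\lambda)\big| = \sum_{k=0}^{N}\lambda^{N-k}\sum_{\substack{S\subseteq\{1,\ldots,N\}\\ |S|=k}}\Big(\prod_{i\notin S}(-1)^{i-1}\Big)\det\big((\mathbf{C}_N)_S\big),
\]
where $(\mathbf{C}_N)_S$ is the principal submatrix on $S=\{i_1<\cdots<i_k\}$. Since the off-diagonal entries of $\mathbf{C}_N$ depend only on position relative to the diagonal and on the radii $\xi_{i_l}$, each $(\mathbf{C}_N)_S$ has exactly the same shape, built from $\xi_{i_1},\ldots,\xi_{i_k}$ alone.

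The heart of the argument is the minor identity
\[
\det\big((\mathbf{C}_N)_S\big) = \frac{(-1)^k}{2^k}\,\e^{\,2n\sum_{l=1}^{k}(-1)^l\xi_{i_l}},
\]
which I would prove by induction on $k$ via a Schur-complement peeling of the last index. Writing the $k\times k$ matrix in block form with $w=(\e^{-n\xi_{i_1}},\ldots,\e^{-n\xi_{i_{k-1}}})^T$, the last column equals $-\cosh(n\xi_{i_k})\,w$ and the last row equals $\sinh(n\xi_{i_k})\,w^T$ on their first $k-1$ entries, so the determinant factors as the $(k-1)$-minor times the scalar Schur complement $-\tfrac12\e^{-2n\xi_{i_k}}+\sinh(n\xi_{i_k})\cosh(n\xi_{i_k})\,Q_{k-1}$, with $Q_{k-1}:=w^T\big((\mathbf{C}_N)_{S\setminus\{i_k\}}\big)^{-1}w$. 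The determinant alone does not close the recursion, so the key device is to carry the auxiliary scalar $Q_k$ along and prove simultaneously that $Q_k=(-1)^k-1$; the block-inverse formula together with the identity $\cosh-\sinh=\e^{-n\xi}$ then collapses the recursion, and the Schur complement becomes exactly $-\tfrac12\e^{2n(-1)^k\xi_{i_k}}$, the required ratio of consecutive minors (whence the claimed value by telescoping from the base case $k=1$). Discovering and proving this closed form for the quadratic form $Q_k$ is the main obstacle: it is precisely the ingredient that turns the intractable $\lambda$-dependent cofactor recursion into a self-contained two-line induction.

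Finally I would assemble the pieces and track signs. Using $\prod_{i\in S}(-1)^{i-1}=(-1)^k\tau_{\mathbf{i}_k}$ and the elementary congruence $\tfrac{N(N-1)}{2}\equiv\lfloor N/2\rfloor\pmod 2$, one gets $\prod_{i\notin S}(-1)^{i-1}=(-1)^{\lfloor N/2\rfloor}(-1)^k\tau_{\mathbf{i}_k}$; multiplying by the minor value $\frac{(-1)^k}{2^k}\e^{\cdots}$ cancels the two $(-1)^k$ factors and reproduces the coefficient $(-1)^{\lfloor N/2\rfloor}\tau_{\mathbf{i}_k}/2^k$, which yields \eqref{cpbi-ktc} after identifying the sum over $S$ with the sum over $C_N^{0,k}$. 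The odd formula \eqref{cpbi-kts} follows verbatim from the same scheme applied to $\mathbf{M}_{N,s}^{(n)}$: the swap $\cosh\leftrightarrow\sinh$ and the sign flip of the diagonal replace the minor value by $\frac{1}{2^k}\e^{\cdots}$, so the same sign bookkeeping turns $2^k$ into $(-2)^k$. As a consistency check I would match both formulas against the explicit evaluations for $2\leqslant N\leqslant 5$ that motivated the conjecture.
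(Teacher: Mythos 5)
Your proposal is correct, and the computations that carry it check out. The pencil expansion $\big|\lambda\mathbf{E}_N+\mathbf{C}_N\big|=\sum_{k=0}^{N}\lambda^{N-k}\sum_{|S|=k}\big(\prod_{i\notin S}(-1)^{i-1}\big)\det\big((\mathbf{C}_N)_S\big)$ is the standard multilinear identity for a diagonal-plus-constant pencil, and the principal minors really do inherit the shape of the full constant matrix, since each off-diagonal entry of $\mathbf{C}_N$ involves only the two radii concerned and the side of the diagonal. The joint induction closes as claimed: with $Q_{k-1}=(-1)^{k-1}-1$ the scalar Schur complement is $-\tfrac12\e^{-2n\xi_{i_k}}+\sinh(n\xi_{i_k})\cosh(n\xi_{i_k})\,Q_{k-1}=-\tfrac12\e^{2n(-1)^{k}\xi_{i_k}}$ in both parities, and the block-inverse formula together with $\cosh x-\sinh x=\e^{-x}$ gives $Q_k=Q_{k-1}+s^{-1}\big(-\sinh\cosh\,Q_{k-1}^2+\e^{-2n\xi_{i_k}}Q_{k-1}+\e^{-2n\xi_{i_k}}\big)=(-1)^k-1$; the sign bookkeeping via $N(N-1)/2\equiv\lfloor N/2\rfloor\ (\mathrm{mod}\ 2)$ is also right, as is the odd-case variant with minor value $2^{-k}\e^{2n\sum_{l}(-1)^{l}\xi_{i_l}}$ and $Q_k'=1-(-1)^k$. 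The one step you should state explicitly is the invertibility of the leading $(k-1)$-minor in the Schur factorization; this is automatic from your induction hypothesis, since that minor is a nonzero exponential.

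Your route is genuinely different from the paper's. The paper (Section \ref{sec6}) performs a symmetric elementary transformation on the first row and column to obtain the three-term recursion of Lemma \ref{le:main01} in the layer index, then must \emph{strengthen} the statement to the closed forms \eqref{MiNc}--\eqref{MiNs} for all truncated determinants $|\mathbf{M}_{i:N,f}^{(n)}|$ and verify these by backward induction, at the cost of the delicate combinatorial manipulations with $\mathfrak{A}_k$, $\mathfrak{B}_k$, $\mathrm{G}_k$ and parity case-splitting; indeed the authors describe the formula as a conjecture found after experimentation and verified by induction. Your argument disposes of $\lambda$ at the outset, reduces all combinatorics to identifying size-$k$ subsets with $C_N^{0,k}$, and concentrates the entire parity structure in the single scalar invariant $Q_k$, so no closed form for truncated determinants needs to be guessed in advance --- each coefficient of the characteristic polynomial is manifestly a signed sum of principal minors of the $\lambda$-free part, which is arguably the conceptual explanation of why \eqref{cpbi-ktc} and \eqref{cpbi-kts} have the form they do. What the paper's route buys in exchange is that Lemma \ref{le:main01} holds for arbitrary coefficients $\lambda_i$, not only the alternating configuration $\lambda_k=(-1)^{k-1}\lambda$ baked into your decomposition $\mathbf{M}_{N,c}^{(n)}(\lambda)=\lambda\,\mathbf{E}_N+\mathbf{C}_N$, and the truncated determinants are available as intermediate objects.
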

%\begin{proof}
%The proof is essentially similar to that of \cite[Theorem 2.2]{DFArxiv} for dipole mode $n=1$ in $\RR^3$, so we omit it here.
%\end{proof}
\begin{rem}\label{rem52}
	%It can be seen that if the number of layers, $N$, is odd then there exists a zero root $\beta =0$ to the equation  $\left|\beta I-\(\mathbf{K}_N^{(n)}\)^T\right|=0$.
	We want to mention that the explicit formulation of characteristic polynomials \eqref{cpbi-ktc} and \eqref{cpbi-kts} establishes a handy algebraic framework for the study of analytical and numerical results on the eigenvalues of NP matrices $\(\mathbf{K}_{N,c}^{(n)}\)^T$ and $\(\mathbf{K}_{N,s}^{(n)}\)^T$, or equivalently NP operator $\mathbb{K}_N^*$. More precisely, by the Galois' Theory (cf. \cite{JPT2001}), it is possible to obtain the exact formula on the eigenvalues of $\mathbb{K}_N^*$ for $N\leqslant 4$ by using the quadratic, cubic or quartic formula for characteristic polynomials with respect to $\lambda$.
	Moreover, the explicit formulation \eqref{cpbi-ktc} and \eqref{cpbi-kts} provide an effective way  to obtain numerical results of these with a large number of layers.  Instead of manipulating explicit algebraic expressions, we shall focus on the corresponding numerical results for cases with larger number of layers in  Section \ref{numer_reslt}.
\end{rem}

%Next, analytical results of multi-layer structures with a small number of layers (actually up to nine-layer) will be presented.
%and  mathematical analysis of plasmon hybridization theory within multi-layer structures will be presented in next section.

%For the five-layer case, we shall provide an exact formula for the eigenvalues of $\mathbb{K}_N^*$. However, for cases with more than five layers, it is possible to express the eigenvalues explicitly up to the nine-layer case by utilizing the quartic formula for polynomials.
As already mentioned before, in order to find the plasmon modes, it is essential to find the roots of the polynomial
\begin{equation}\label{eq:deffq01}
f^{\pm}_N(\lambda):=\sum\limits_{k=0}^{N}  \frac{\lambda^{N-k}}{(\pm 2)^k}\left(\sum\limits_{\mathbf{i}_{k}\in C_{N}^{0,k}}\tau_{\mathbf{i}_{k}}\e^{ 2 n\sum_{l=1}^k (-1)^{l}\xi_{i_l}}\right).
\end{equation}
To this end, we have the following elementary result on the roots of $f^{\pm}_N(\lambda)$:
\begin{thm}\label{thm53}
	Let $f^{\pm}_N(\lambda)$ be defined in \eqref{eq:deffq01} and $\lambda^{\pm}$ be the root to $f^{\pm}_N(\lambda)$, respectively.
	Then there exists $N$ real values of roots to $f^{\pm}_N(\lambda)$, respectively. Moreover, we have $\lambda^+ = -\lambda^-\in \left[-1/2, 1/2\right]$.
\end{thm}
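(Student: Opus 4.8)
The plan is to reduce the two assertions about the scalar polynomials $f^{\pm}_N$ to the spectral theory of the matrix type NP operator already established in Proposition \ref{NPspectral}, and then to read off the symmetry $\lambda^+ = -\lambda^-$ from a direct algebraic comparison of $f^+_N$ and $f^-_N$. First I would identify the roots of $f^{\pm}_N$ with eigenvalues of the NP matrices. By Theorem \ref{thm52} we have $\left|\mathbf{M}_{N,c}^{(n)}(\lambda)\right| = (-1)^{\lfloor N/2\rfloor}f^+_N(\lambda)$ and $\left|\mathbf{M}_{N,s}^{(n)}(\lambda)\right| = (-1)^{\lfloor N/2\rfloor}f^-_N(\lambda)$, so the roots of $f^+_N$ (resp. $f^-_N$) are exactly the values of $\lambda$ at which $\mathbf{M}_{N,c}^{(n)}$ (resp. $\mathbf{M}_{N,s}^{(n)}$) is singular. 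Multiplying the $l$-th row of $\mathbf{M}_{N,c}^{(n)}$ by $(-1)^{l-1}$ changes the determinant only by the nonzero factor $(-1)^{N(N-1)/2}$, and after substituting $\lambda_l=(-1)^{l-1}\lambda$ it turns $\mathbf{M}_{N,c}^{(n)}$ into $\lambda\,\mathbf{I}_N+\left(\mathbf{K}_{N,c}^{(n)}\right)^T$, exactly the matrix appearing in \eqref{eq:notri01} in the proof of Theorem \ref{th:resonancedefth01}. Hence $f^+_N(\lambda)=0$ iff $-\lambda$ is an eigenvalue of $\left(\mathbf{K}_{N,c}^{(n)}\right)^T$, and likewise $f^-_N(\lambda)=0$ iff $-\lambda$ is an eigenvalue of $\left(\mathbf{K}_{N,s}^{(n)}\right)^T$.

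Next I would establish reality together with the localization in $[-1/2,1/2]$. The key observation is that the span of $\{\beta_n^{c,1},\ldots,\beta_n^{c,N}\}$, with one cosine mode placed on each interface $\Gamma_k$, is an $N$-dimensional subspace of $\mathcal{H}^{-1/2}_0$ invariant under $\mathbb{K}_N^*$: by \eqref{K-ellipse} each diagonal operator $\Kcal^*_{\Gamma_k}$ sends $\beta_n^{c,k}$ to a scalar multiple of itself, and by \eqref{S-ellipse-cos}--\eqref{dS-ellipse-cos} each off-diagonal operator $\nu_k\cdot\nabla\Scal_{\Gamma_l}$ sends $\beta_n^{c,l}$ to a scalar multiple of $\beta_n^{c,k}$ on $\Gamma_k$. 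In this basis the restriction of $\mathbb{K}_N^*$ is represented (up to transpose, which is irrelevant for eigenvalues) by the matrix in \eqref{KN(n)T3}, and analogously the sine modes give \eqref{KN(n)Ts3}. Since Proposition \ref{NPspectral} and the subsequent Remark show that $\mathbb{K}_N^*$ is compact and self-adjoint on $\mathcal{H}$ with spectrum in $[-1/2,1/2]$, its restriction to each closed invariant subspace is again self-adjoint, so $\left(\mathbf{K}_{N,c}^{(n)}\right)^T$ and $\left(\mathbf{K}_{N,s}^{(n)}\right)^T$ are similar to Hermitian matrices and carry $N$ real eigenvalues in $[-1/2,1/2]$. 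As $f^{\pm}_N$ is monic of degree $N$ (the $k=0$ term contributes $\lambda^N$) and each of its roots is the negative of such an eigenvalue, while $[-1/2,1/2]$ is symmetric about the origin, $f^{\pm}_N$ has exactly $N$ real roots, all lying in $[-1/2,1/2]$.

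For the relation $\lambda^+ = -\lambda^-$ I would compare the two polynomials directly. Writing $c_k := \sum_{\mathbf{i}_{k}\in C_{N}^{0,k}}\tau_{\mathbf{i}_{k}}\e^{\,2n\sum_{l=1}^k(-1)^l\xi_{i_l}}$ so that $f^{\pm}_N(\lambda)=\sum_{k=0}^N(\pm 2)^{-k}\lambda^{N-k}c_k$, one computes
\[
f^-_N(-\lambda)=\sum_{k=0}^N\frac{(-\lambda)^{N-k}}{(-2)^k}c_k=(-1)^N\sum_{k=0}^N\frac{\lambda^{N-k}}{2^k}c_k=(-1)^N f^+_N(\lambda),
\]
so $\lambda$ is a root of $f^+_N$ precisely when $-\lambda$ is a root of $f^-_N$, which is the claimed $\lambda^+=-\lambda^-$. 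The main obstacle is the second step: rigorously transferring the real, $[-1/2,1/2]$-localized spectrum of $\mathbb{K}_N^*$ to the non-symmetric, sign-alternating matrices $\left(\mathbf{K}_{N,c}^{(n)}\right)^T$ and $\left(\mathbf{K}_{N,s}^{(n)}\right)^T$. This demands verifying that the Fourier-mode spans are genuinely invariant and that the restricted entries reproduce \eqref{KN(n)T3} and \eqref{KN(n)Ts3} with all their alternating signs, the symmetrization through the twisted inner product of Proposition \ref{NPspectral} being exactly what upgrades "real matrix" to "real spectrum." The reduction in the first step is routine determinant bookkeeping, and the symmetry in the last step is a one-line computation.
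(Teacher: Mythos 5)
Your proposal is correct and takes essentially the same route as the paper: the paper's three-sentence proof invokes exactly Proposition \ref{NPspectral} together with the definitional symmetry of $f^{\pm}_N$, leaving implicit the two reductions you make explicit, namely that the roots of $f^{\pm}_N$ are the negatives of eigenvalues of $(\mathbf{K}_{N,c}^{(n)})^T$ and $(\mathbf{K}_{N,s}^{(n)})^T$, and that these matrices represent $\mathbb{K}_N^*$ on invariant Fourier-mode subspaces of $\mathcal{H}^{-1/2}_0$ (note each $\beta_n^{c,k}$, $\beta_n^{s,k}$ has zero mean, so the span indeed lies in $\mathcal{H}^{-1/2}_0$) and hence inherit its real spectrum in $[-1/2,1/2]$. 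Your identity $f^{-}_N(-\lambda)=(-1)^N f^{+}_N(\lambda)$ is precisely what the paper dismisses as ``easy to see'' from \eqref{eq:deffq01}.
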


%\begin{rem}
%	Each eigenvalue is associated with one type of plasmon mode. This means that they may appear as in-gap localized surface plasmon modes once the material is truncated to form a layered structure. In other words, truncating the material will result in mode splitting.
%\end{rem}

 Next, in order to dicuss the relationship between the concentric disks case and the confocal ellipses case, we shall consider that the elliptic radius of the layers are large enough.

 \begin{thm}\label{thm54}
 	Suppose that $\xi_k = \tilde{\xi}+c_k$, where $\tilde{\xi}\gg 1$, and $c_1 >c_2> \ldots > c_{N}>0$ are regular constants. Then the polynomial  \eqref{eq:deffq01} can be rewritten as
 	\begin{equation}\label{ellip2disk}
 	f^{\pm}_N(\lambda)=\sum\limits_{k=0}^{\lfloor \frac{N}{2}\rfloor}  \frac{\lambda^{N-2k}}{( 2)^{2k}}\left(\sum\limits_{\mathbf{i}_{2k}\in C_{N}^{0,2k}}\tau_{\mathbf{i}_{2k}}\e^{ 2 n\sum_{l=1}^{2k} (-1)^{l}\xi_{i_l}}\right)+O(\e^{-2n\tilde{\xi}}):=f_{N,\textup{r}}(\lambda)+O(\e^{-2n\tilde{\xi}}).
 	\end{equation}
 \end{thm}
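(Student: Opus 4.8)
The plan is to start directly from the definition \eqref{eq:deffq01} and substitute $\xi_{i_l}=\tilde\xi+c_{i_l}$ into every exponent, then split the outer sum over $k$ according to the parity of $k$. Everything rests on the elementary telescoping identity
\[
\sum_{l=1}^{k}(-1)^{l}=\begin{cases}0,&k\ \text{even},\\-1,&k\ \text{odd},\end{cases}
\]
which I would check by pairing consecutive terms. Writing each exponent in the form $2n\sum_{l=1}^{k}(-1)^{l}\xi_{i_l}=2n\tilde\xi\sum_{l=1}^{k}(-1)^{l}+2n\sum_{l=1}^{k}(-1)^{l}c_{i_l}$, the coefficient of $\tilde\xi$ is exactly $\sum_{l=1}^{k}(-1)^{l}$, which vanishes precisely when $k$ is even and equals $-1$ when $k$ is odd. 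This single observation separates the two regimes.

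For even $k$ the exponent collapses to $2n\sum_{l=1}^{k}(-1)^{l}c_{i_l}$, which carries no $\tilde\xi$ dependence at all; hence the even-$k$ terms of $f^{\pm}_N$ are literally $\tilde\xi$-independent. Moreover $(\pm2)^{k}=2^{k}$ whenever $k$ is even, so the $\pm$ ambiguity disappears on exactly these terms, and after reindexing $k\mapsto 2k$ they reproduce precisely $f_{N,\mathrm{r}}(\lambda)$ as displayed in \eqref{ellip2disk}. For odd $k$ the exponent equals $-2n\tilde\xi+2n\sum_{l=1}^{k}(-1)^{l}c_{i_l}$, so each such term factors as $\e^{-2n\tilde\xi}$ times a quantity independent of $\tilde\xi$. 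Since every index set $C_N^{0,k}$ is finite and there are finitely many values of $k$, summing all odd-$k$ contributions yields a total of size $O(\e^{-2n\tilde\xi})$; the fact that the odd-$k$ terms differ between the $+$ and $-$ versions (through the sign of $(\pm2)^{k}$) is immaterial, because they are all absorbed into this remainder.

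I expect no genuine obstacle here: the argument is essentially parity bookkeeping once the telescoping identity is in hand. The only point deserving a little care is the uniformity of the remainder estimate. To settle it I would note that the prefactor multiplying $\e^{-2n\tilde\xi}$ is a polynomial in $\lambda$ whose coefficients depend only on the fixed regular constants $c_1,\dots,c_N$; since by Theorem \ref{thm53} the roots of interest satisfy $\lambda\in[-1/2,1/2]$, this prefactor is bounded independently of $\tilde\xi$. Collecting the even-$k$ part as $f_{N,\mathrm{r}}(\lambda)$ and the odd-$k$ part as the $O(\e^{-2n\tilde\xi})$ remainder then gives \eqref{ellip2disk}, so the main effort is simply to organize the even/odd split cleanly and to record that the remainder is uniform over the relevant range of $\lambda$.
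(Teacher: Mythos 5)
Your proposal is correct and follows essentially the same route as the paper: the paper likewise observes that $\e^{2n\sum_{l=1}^{k}(-1)^{l}\xi_{i_l}}=O(\e^{-2n\tilde{\xi}})$ for odd $k$ (your telescoping identity made explicit) and keeps the even-$k$ terms, where $(\pm 2)^{2k}=2^{2k}$ removes the sign ambiguity. Your added remark on uniformity of the remainder over $\lambda\in[-1/2,1/2]$ is a harmless refinement the paper leaves implicit.
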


 \begin{rem}\label{rem53}
From \eqref{ellipcoor}, it follows that the confocal ellipse is a circle of radius $\frac{1}{2}R\e^{\xi}$ as $\xi\to\infty$. Consider the boundaries $\Gamma_k:=\left\{(\xi, \eta) : \xi = \xi_k\right\},$  $k=1,2,\ldots,N$, are confocal ellipses of elliptic radii ${\xi_k}  = \tilde{\xi}+c_k$, where $\tilde{\xi}\gg 1$  and $c_1 >c_2> \ldots > c_{N}$. We find that the polynomial $f_{N,\textup{r}}(\lambda)$ in \eqref{ellip2disk} is the characteristic polynomial of the NP operator $\mathbb{K}_N^*$ where the boundaries $\Gamma_k,$  $k=1,2,\ldots,N$, are circles of radii
$\frac{1}{2}R\e^{\xi_k}$, respectively (cf. \cite{FangdengMMA23,KZDF}).  Therefore, concentric discs are a special case where the radius of the confocal ellipse is sufficiently large.
 \end{rem}

It is known that as the curvature of that boundary point increases to a certain degree, then plasmon resonance occurs locally near the high-curvature point of the plasmonic inclusion. This is referred to as the localization and geometrization in the plasmon resonance (see \cite{ACLJFA23,BLLW_ESAIM2020}).
The corresponding curvature at a boundary point $(\xi_k, \eta)\in \Gamma_k$ can be directly calculated to be
\begin{equation}
\kappa(\eta) = \frac{\cosh\xi_k \sinh\xi_k}{R(\sinh^2\xi_k+\sin^2 \eta)^{\frac{3}{2}}}.
\end{equation}
Hence, the largest curvature $\kappa_{\textup{max}} = \frac{\cosh\xi_k}{R\sinh^2\xi_k}$ is attainable at the two vertices with $\eta = \pi$ and $\eta = 0$ respectively on the semi-major axis. It is noted that for a fixed $R,$ the curvature $\kappa_{\textup{max}}$ increases as $\xi_k$ decreases and actually one has that $\kappa_{\textup{max}} \to \infty$ as $\xi_k \to 0$. To this end, we shall consider that the elliptic radius of the layers are small enough.

\begin{thm}\label{thm55}
Suppose that $0<\xi_k\ll 1$, $k=1, 2, \ldots, N$. Then the polynomial  \eqref{eq:deffq01} can be rewritten as
\begin{equation}
f^{\pm}_N(\lambda)=\sum\limits_{k=0}^{N}  \frac{\lambda^{N-k}}{(\pm 2)^k}\left(\sum\limits_{\mathbf{i}_{k}\in C_{N}^{0,k}}\tau_{\mathbf{i}_{k}}\right)+o(1) =\(\lambda^2-\frac{1}{4}\)^{\lfloor \frac{N}{2}\rfloor} \(\lambda\mp\frac{1}{2}\)^{\frac{(-1)^{N-1}+1}{2}}+o(1).
%= \(\lambda\pm\frac{1}{2}\)^{\lfloor \frac{N}{2}\rfloor} \(\lambda\mp\frac{1}{2}\)^{\lfloor \frac{N}{2}\rfloor+\frac{(-1)^{N-1}+1}{2}}+o(1).
\end{equation}
\end{thm}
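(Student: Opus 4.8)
The plan is to pass to the pointwise limit $\xi_k\to 0^+$ in the explicit polynomial \eqref{eq:deffq01}, and then to evaluate the resulting constant-coefficient polynomial in closed form by means of a generating-function identity. Since $f^{\pm}_N(\lambda)$ is a polynomial of fixed degree $N$ in $\lambda$ whose coefficients are finite sums of the exponentials $\e^{2n\sum_{l=1}^k(-1)^l\xi_{i_l}}$, and since each such exponential tends to $1$ as all $\xi_{i_l}\to 0$, the first asserted equality
\[
f^{\pm}_N(\lambda)=\sum_{k=0}^N\frac{\lambda^{N-k}}{(\pm2)^k}\left(\sum_{\mathbf{i}_{k}\in C_N^{0,k}}\tau_{\mathbf{i}_{k}}\right)+o(1)
\]
is immediate, the $o(1)$ being uniform on compact subsets of $\lambda$. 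Everything then reduces to computing the integer coefficients $c_k:=\sum_{\mathbf{i}_{k}\in C_N^{0,k}}\tau_{\mathbf{i}_{k}}=\sum_{1\leqslant i_1<\cdots<i_k\leqslant N}(-1)^{i_1+\cdots+i_k}$.

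To evaluate $c_k$, I would introduce the generating polynomial
\[
P(t):=\prod_{j=1}^N\bigl(1+(-1)^jt\bigr)=\sum_{k=0}^N c_k\,t^k,
\]
the second equality following by expanding the product and collecting the monomials indexed by the subsets $\{i_1,\dots,i_k\}$. Separating the factors according to the parity of $j$ gives $P(t)=(1-t)^{\lceil N/2\rceil}(1+t)^{\lfloor N/2\rfloor}$; hence $P(t)=(1-t^2)^{N/2}$ when $N$ is even, and $P(t)=(1-t^2)^{\lfloor N/2\rfloor}(1-t)$ when $N$ is odd.

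The final step is to recognize the constant-coefficient polynomial as a scaled evaluation of $P$. Writing the leading part as
\[
\sum_{k=0}^N\frac{c_k}{(\pm2)^k}\lambda^{N-k}=\lambda^N\sum_{k=0}^N c_k\Bigl(\frac{1}{\pm2\lambda}\Bigr)^k=\lambda^N P\!\left(\frac{1}{\pm2\lambda}\right),
\]
I would substitute the two closed forms of $P$ and simplify using $\lambda^{2\lfloor N/2\rfloor}(1-\tfrac{1}{4\lambda^2})^{\lfloor N/2\rfloor}=(\lambda^2-\tfrac14)^{\lfloor N/2\rfloor}$ together with $\lambda(1\mp\tfrac{1}{2\lambda})=\lambda\mp\tfrac12$. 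For even $N$ the evenness of $(1-t^2)^{N/2}$ in $t$ makes the result independent of the sign $\pm$, yielding $(\lambda^2-\tfrac14)^{N/2}$; for odd $N$ the extra factor $(1-t)$ evaluated at $t=\frac{1}{\pm2\lambda}$ produces the factor $\lambda\mp\tfrac12$. Both cases are captured uniformly by the exponent $\frac{(-1)^{N-1}+1}{2}$, which equals $0$ for even $N$ and $1$ for odd $N$, giving exactly the claimed expression. Since the intermediate identity between $\sum_k\frac{c_k}{(\pm2)^k}\lambda^{N-k}$ and $\lambda^N P(1/(\pm2\lambda))$ holds for all $\lambda\neq0$ between two polynomials, it extends to every $\lambda$ by continuity.

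I expect no genuine obstacle here. The only points demanding care are the bookkeeping of the floor/ceiling exponents in the even versus odd cases and the correct propagation of the $\pm$/$\mp$ signs through the substitution $t=\frac{1}{\pm2\lambda}$; the generating-function evaluation of $c_k$ via the parity-split product $P(t)=(1-t)^{\lceil N/2\rceil}(1+t)^{\lfloor N/2\rfloor}$ is the one non-mechanical idea, after which the simplification is routine algebra.
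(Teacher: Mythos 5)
Your proposal is correct, and it takes a genuinely different route to the combinatorial core of the theorem. The first step is common to both arguments: since each coefficient of $f^{\pm}_N$ is a finite sum of exponentials $\mathrm{e}^{2n\sum_{l}(-1)^{l}\xi_{i_l}}$ that tend to $1$, passing to the limit gives the constant-coefficient polynomial with coefficients $h_{N,k}=\sum_{\mathbf{i}_k\in C_N^{0,k}}\tau_{\mathbf{i}_k}$, exactly as in the paper's display \eqref{smallelpradii}. The divergence is in how these coefficients are evaluated. The paper proceeds via recursion formulae for $h_{N,k}$ (its Lemma \ref{le:recco01}, imported from the radial-case paper \cite{FangdengMMA23}), the vanishing identity \eqref{eq:obse02} for odd-index coefficients when $N$ is even, the binomial evaluation $h_{N,2k}=(-1)^kC_{N/2}^k$ from ``elementary combination theory,'' and, for odd $N$, the relations $h_{N,k}=-h_{N-1,k-1}$ ($k$ odd) and $h_{N,k}=h_{N-1,k}$ ($k$ even) to factor $\bigl(\lambda\mp\tfrac12\bigr)$ out of the sum term by term and reduce to the even case. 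You instead package all coefficients at once in the product $P(t)=\prod_{j=1}^N\bigl(1+(-1)^jt\bigr)=(1-t)^{\lceil N/2\rceil}(1+t)^{\lfloor N/2\rfloor}$ and identify the limiting polynomial as $\lambda^N P\bigl(1/(\pm2\lambda)\bigr)$; the parity split of the product then yields both the even and odd cases, with the sign bookkeeping $\lambda\bigl(1-\tfrac{1}{\pm2\lambda}\bigr)=\lambda\mp\tfrac12$ correctly matching $f^+$ to the factor $\lambda-\tfrac12$ and $f^-$ to $\lambda+\tfrac12$. Your argument is shorter and fully self-contained — it needs no external recursion lemma, and it recovers the paper's intermediate facts (the vanishing of odd-index $h_{N,k}$ for even $N$, the formula $h_{N,2k}=(-1)^k\binom{N/2}{k}$, and the sign-independence of the even-$N$ limit) as immediate byproducts of the factorization of $P$ — whereas the paper's recursion machinery has the advantage of being shared with the radial multi-layer analysis it builds on. Since the generating-function evaluation is an exact polynomial identity, the $o(1)$ statement follows just as in the paper, and your remark that the error is uniform on compact $\lambda$-sets is a harmless sharpening.
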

%\begin{rem}
%We want to mention that even at plasmon mode, the plasmon resonance may still not be induced. We refer to \cite{FDLMMA15} for details on inducing plasmon resonance.
%Here we show some explicit example based on Theorem \ref{thm55}. It is clearly that in the case of multi-layer thin strips in Theorem \ref{thm55}, the two possible plasmon modes are $\lambda=\pm \frac{1}{2}$. However, the plasmon resonance may not be induced for $\lambda=-\frac{1}{2}$. To better explain this, we consider a two-layered case and suppose that the background field $H=\cosh \xi  \cos \eta$, i.e., $H$ is uniform background field. It then can be seen from (\ref{eq:purbmn03}) that
%\[
%\begin{split}
%u-H=&\cos \eta\frac{1}{e^{\xi}}\bm{e}^T \mathbb{F}^{(1)}_{2,l,c} \(-\lambda\mathbf{I}_2-\(\mathbf{K}_{2,c}^{(1)}\)^T\)^{-1}\tilde{\bm{s}}\\
%=&\cos \eta\frac{\xi_2-\xi_1}{(\lambda-1/2)e^{\xi}}+o(\xi_1+\xi_2)    \quad \quad \mbox{in} \quad A_0.
%\end{split}
%\]
%It can be readily seen that the blow up on gradient of leading order term only occurs at $\lambda=1/2$ when $(\xi_1-\xi_2)\sim(\xi_1+\xi_2)$.
%\end{rem}

\section{Proofs of Theorems \ref{thm52}--\ref{thm55}}\label{sec6}

\subsection{Eigenvalue problem}\label{subsec4.1}

As mentioned above, the plasmon mode is parallel to the material configurations which make the parameter $-\lambda$ the eigenvalue of $\(\mathbf{K}_{N,c}^{(n)}\)^T$ or $\(\mathbf{K}_{N,s}^{(n)}\)^T$ as $\tau\rightarrow 0$. So, we focus on the eigenvalues of the matrices $\(\mathbf{K}_{N,c}^{(n)}\)^T$ and $\(\mathbf{K}_{N,s}^{(n)}\)^T$, or the explicit formulae  of $\left|\mathbf{M}_{N,c}^{(n)}\right|$ and $\left|\mathbf{M}_{N,c}^{(n)}\right|$  defined in \eqref{MNc} and \eqref{MNs}, respectively, in this subsection.
For this, we first define
\begin{equation}\label{eq:matP02}
\mathbf M^{(n)}_{i:N,c}:= \begin{bmatrix}
\lambda_{i}- \frac{1}{2 \e^{2n\xi_i}} & -\frac{\cosh (n\xi_{i+1})}{\e^{n\xi_i}} &  \cdots & -\frac{\cosh (n\xi_{N-1})}{\e^{n\xi_i}}& -\frac{\cosh (n\xi_{N})}{\e^{n\xi_i}} \\
\nm
\frac{\sinh (n\xi_{i+1})}{ \e^{n\xi_i}} & 	\lambda_{i+1}- \frac{1}{2 \e^{2n\xi_{i+1}}}& \cdots & -\frac{\cosh (n\xi_{N-1})}{\e^{n\xi_{i+1}}}& -\frac{\cosh (n\xi_N)}{\e^{n\xi_{i+1}}}\\
\nm
\vdots & \vdots &\ddots& \vdots &\vdots \\
\nm
\frac{\sinh (n\xi_{N-1})}{ \e^{n\xi_i}} &  \frac{\sinh (n\xi_{N-1})}{ \e^{n\xi_{i+1}}} & \cdots &\lambda_{N-1}- \frac{1}{2 \e^{2n\xi_{N-1}}}&-\frac{\cosh (n\xi_N)}{\e^{n\xi_{N-1}}}\\
\nm
\frac{\sinh (n\xi_N)}{ \e^{n\xi_i}} &  \frac{\sinh (n\xi_N)}{ \e^{n\xi_{i+1}}}& \cdots & \frac{\sinh (n\xi_N)}{ \e^{n\xi_{N-1}}}& 	\lambda_{N}- \frac{1}{2 \e^{2n\xi_N}}
\end{bmatrix},
\end{equation}
and
\begin{equation}\label{eq:matP03}
\mathbf{M}_{i:N,s}^{(n)}:=
\begin{bmatrix}
\lambda_{i}+ \frac{1}{2 \e^{2n\xi_i}} & -\frac{\sinh (n\xi_{i+1})}{\e^{n\xi_i}} &  \cdots & -\frac{\sinh (n\xi_{N-1})}{\e^{n\xi_i}}& -\frac{\sinh (n\xi_{N})}{\e^{n\xi_i}} \\
\nm
\frac{\cosh (n\xi_{i+1})}{ \e^{n\xi_i}} & 	\lambda_{i+1}+ \frac{1}{2 \e^{2n\xi_{i+1}}}& \cdots & -\frac{\sinh (n\xi_{N-1})}{\e^{n\xi_{i+1}}}& -\frac{\sinh (n\xi_N)}{\e^{n\xi_{i+1}}}\\
\nm
\vdots & \vdots &\ddots& \vdots &\vdots \\
\nm
\frac{\cosh (n\xi_{N-1})}{ \e^{n\xi_i}} &  \frac{\cosh (n\xi_{N-1})}{ \e^{n\xi_{i+1}}} & \cdots &\lambda_{N-1}+ \frac{1}{2 \e^{2n\xi_{N-1}}}&-\frac{\sinh (n\xi_N)}{\e^{n\xi_{N-1}}}\\
\nm
\frac{\cosh (n\xi_N)}{ \e^{n\xi_i}} &  \frac{\cosh (n\xi_N)}{ \e^{n\xi_{i+1}}}& \cdots & \frac{\cosh (n\xi_N)}{ \e^{n\xi_{N-1}}}& 	\lambda_{N}+ \frac{1}{2 \e^{2n\xi_N}}
\end{bmatrix}.
\end{equation}
and set $\mathbf M^{(n)}_{N+1:N,s}=\mathbf M^{(n)}_{N+1:N,c} = 1$. Obviously,  $\mathbf M^{(n)}_{N,c}=\mathbf M^{(n)}_{1:N,c}$ and $\mathbf M^{(n)}_{N,s}=\mathbf M^{(n)}_{1:N,s}$.

%\[
%\mathbf{M}_{i:N,s}^{(n)}:=
%\begin{bmatrix}
%\lambda_{i}+\lambda_{i+1}\e^{2n(\xi_{i+1}-\xi_i)} & -(\lambda_{i+1}+ \frac{1}{2})\e^{n(\xi_{i+1}-\xi_i)} &  \cdots & 0& 0 \\
%\nm
%-(\lambda_{i+1}- \frac{1}{2})\e^{n(\xi_{i+1}-\xi_i)}  & 	\lambda_{i+1}+ \frac{1}{2 \e^{2n\xi_{i+1}}}& \cdots & -\frac{\sinh (n\xi_{N-1})}{\e^{n\xi_{i+1}}}& -\frac{\sinh (n\xi_N)}{\e^{n\xi_{i+1}}}\\
%\nm
%\vdots & \vdots &\ddots& \vdots &\vdots \\
%\nm
%0 &  \frac{\cosh (n\xi_{N-1})}{ \e^{n\xi_{i+1}}} & \cdots &\lambda_{N-1}+ \frac{1}{2 \e^{2n\xi_{N-1}}}&-\frac{\sinh (n\xi_N)}{\e^{n\xi_{N-1}}}\\
%\nm
%0 &  \frac{\cosh (n\xi_N)}{ \e^{n\xi_{i+1}}}& \cdots & \frac{\cosh (n\xi_N)}{ \e^{n\xi_{N-1}}}& 	\lambda_{N}+ \frac{1}{2 \e^{2n\xi_N}}
%\end{bmatrix}.
%\]

Next, we give the recursion formulae for $\left|\mathbf{M}_{i:N,c}^{(n)}\right|$ and $\left|\mathbf{M}_{i:N,s}^{(n)}\right|$, respectively, in the following lemma.
\begin{lem}\label{le:main01}
	Let $N\geqslant 2$. Then there holds the following  recursion formulae for $i = 1,2,\ldots,N-1$:
	\begin{equation}\label{eq:deter0102}
	\left|\mathbf{M}_{i:N,f}^{(n)}\right|=\left(\lambda_{i}+\lambda_{i+1}\e^{2n(\xi_{i+1}-\xi_i)}\right)\left|\mathbf{M}_{i+1:N,f}^{(n)}\right|-\(\lambda_{i+1}^2-\frac{1}{4}\)\e^{2n(\xi_{i+1}-\xi_i)}\left|\mathbf{M}_{i+2:N,f}^{(n)}\right|, \quad f=c, s.
	\end{equation}
\end{lem}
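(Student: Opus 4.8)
The plan is to establish the recursion \eqref{eq:deter0102} by a single determinant-preserving row operation followed by one Laplace expansion, handling the even ($f=c$) and odd ($f=s$) cases simultaneously. Throughout I would abbreviate $\mu:=\e^{n(\xi_{i+1}-\xi_i)}$, so that $\mu^2=\e^{2n(\xi_{i+1}-\xi_i)}$, and first record the entry pattern of $\mathbf{M}^{(n)}_{i:N,f}$ by indexing rows and columns with the layer labels $r,c\in\{i,\ldots,N\}$: the diagonal entry is $\lambda_r\mp\tfrac12\e^{-2n\xi_r}$, every super-diagonal entry ($r<c$) carries the prefactor $\e^{-n\xi_r}$, and every sub-diagonal entry ($r>c$) carries the prefactor $\e^{-n\xi_c}$. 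The decisive structural feature is that for each fixed column $c\geqslant i+2$ the entries in the first two rows (layers $i$ and $i+1$) differ only by the ratio $\e^{n\xi_{i+1}}/\e^{n\xi_i}=\mu$.

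First I would apply the operation (Row $1$)$\,\mapsto\,$(Row $1$)$\,-\,\mu\,$(Row $2$). By the ratio observation this annihilates every entry of the first row in the columns $i+2,\ldots,N$, and a short computation using $\e^{n\xi_{i+1}}\e^{-n\xi_{i+1}}=1$ shows that, \emph{identically} for $f=c$ and $f=s$, the two surviving entries become $\lambda_i-\tfrac{\mu^2}{2}$ in column $i$ and $-\mu\!\left(\lambda_{i+1}+\tfrac12\right)$ in column $i+1$. As the operation preserves the determinant, expanding along this new first row gives
\[
\left|\mathbf{M}^{(n)}_{i:N,f}\right|=\Bigl(\lambda_i-\tfrac{\mu^2}{2}\Bigr)\left|\mathbf{M}^{(n)}_{i+1:N,f}\right|+\mu\Bigl(\lambda_{i+1}+\tfrac12\Bigr)\bigl|M_{12}\bigr|,
\]
where the first cofactor equals $\left|\mathbf{M}^{(n)}_{i+1:N,f}\right|$ (deleting row and column $i$ leaves exactly that block, since the entries depend only on the layer labels), and $M_{12}$ denotes the minor obtained by deleting the first row together with the column labelled $i+1$.

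The crux, and the only genuinely delicate point, is that $M_{12}$ is \emph{not} one of the nested matrices $\mathbf{M}^{(n)}_{j:N,f}$ and so must be re-expressed through them. The key observation I would exploit is that $M_{12}$ and $\mathbf{M}^{(n)}_{i+1:N,f}$ share all of their columns except the first: both carry the same row-$(i+1)$ vector across the columns $i+2,\ldots,N$ and the same trailing block $\mathbf{M}^{(n)}_{i+2:N,f}$. Writing $\mathbf{x}$ and $\mathbf{y}$ for their respective first columns, one checks that the sub-diagonal part of $\mathbf{x}$ is exactly $\mu$ times that of $\mathbf{y}$, so $\mathbf{x}=\mu\,\mathbf{y}+(x_1-\mu y_1)\mathbf{e}_1$ with $\mathbf{e}_1$ the top basis vector. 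Multilinearity of the determinant in the first column then yields
\[
\bigl|M_{12}\bigr|=\mu\left|\mathbf{M}^{(n)}_{i+1:N,f}\right|+\mu\Bigl(\tfrac12-\lambda_{i+1}\Bigr)\left|\mathbf{M}^{(n)}_{i+2:N,f}\right|,
\]
where the second cofactor is $\left|\mathbf{M}^{(n)}_{i+2:N,f}\right|$ (delete the top row and first column) and the scalar $x_1-\mu y_1=\mu(\tfrac12-\lambda_{i+1})$ follows from the same elementary simplification as in the row reduction.

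Substituting this into the expansion and collecting coefficients, the $\pm\tfrac{\mu^2}{2}$ contributions cancel and the multipliers of $\left|\mathbf{M}^{(n)}_{i+1:N,f}\right|$ and $\left|\mathbf{M}^{(n)}_{i+2:N,f}\right|$ become $\lambda_i+\mu^2\lambda_{i+1}$ and $-\mu^2\!\left(\lambda_{i+1}^2-\tfrac14\right)$, which is exactly \eqref{eq:deter0102} once $\mu^2=\e^{2n(\xi_{i+1}-\xi_i)}$ is restored. I expect the bookkeeping in the $M_{12}$ step to be the main obstacle; the convention $\mathbf{M}^{(n)}_{N+1:N,f}=1$ makes the identity already valid at $i=N-1$, and since every computation above is insensitive to the $\cosh\leftrightarrow\sinh$ interchange and its accompanying sign change, a single argument settles both $f=c$ and $f=s$.
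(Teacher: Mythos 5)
Your proof is correct and follows essentially the same route as the paper: the identical row operation (Row $1$)$\,-\,\mu\,$(Row $2$) with $\mu=\e^{n(\xi_{i+1}-\xi_i)}$, the same entry simplifications yielding $\lambda_i-\tfrac{\mu^2}{2}$ and $-\mu\left(\lambda_{i+1}+\tfrac12\right)$, followed by a Laplace expansion. The only organizational difference is that the paper simultaneously applies the matching column operation, conjugating by the elementary matrix $\mathbf{I}_{1,2*(-\mu)}$ as in \eqref{ETFRC} so that a single cofactor expansion immediately produces \eqref{eq:deter0102}, whereas you perform only the row operation and recover the same cancellation by splitting the first column of the minor $M_{12}$ via multilinearity --- which is precisely that column operation in disguise.
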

\begin{proof}[\bf Proof]
	We only prove the case $f=c$, and the case $f=s$ can be proved in a similar manner.
	Denote by $\mathbf{I}_{i,j*k}$  the elementary matrix which is transform of identity matrix by multiplying each element of the $j$-th row of the identity matrix by $k$, and then adding it to the $i$-th row.
	Thus, by using some elementary transformation, we can obtain
	\begin{equation}\label{ETFRC}
	\begin{aligned}
	\left|\mathbf{M}_{i:N,c}^{(n)}\right|=&
	\left|\mathbf{I}_{1,2*(-\e^{n(\xi_{i+1}-\xi_i)})}\mathbf{M}_{{i:N},c}^{(n)}\mathbf{I}^T_{1,2*(-\e^{n(\xi_{i+1}-\xi_i)})}\right|\\
	=&\left|\begin{array}{ccccc}
	\lambda_{i}+\lambda_{i+1}\e^{2n(\xi_{i+1}-\xi_i)} & -(\lambda_{i+1}+ \frac{1}{2})\e^{n(\xi_{i+1}-\xi_i)} &  \cdots & 0& 0\\
	\nm
	-(\lambda_{i+1}- \frac{1}{2})\e^{n(\xi_{i+1}-\xi_i)} & 	\lambda_{i+1}- \frac{1}{2 \e^{2n\xi_{i+1}}}& \cdots & -\frac{\cosh (n\xi_{N-1})}{\e^{n\xi_{i+1}}}& -\frac{\cosh (n\xi_N)}{\e^{n\xi_{i+1}}}\\
	\nm
	\vdots & \vdots &\ddots& \vdots &\vdots \\
	\nm
	0 &  \frac{\sinh (n\xi_{N-1})}{ \e^{n\xi_{i+1}}} & \cdots &\lambda_{N-1}- \frac{1}{2 \e^{2n\xi_{N-1}}}&-\frac{\cosh (n\xi_N)}{\e^{n\xi_{N-1}}}\\
	\nm
	0 &  \frac{\sinh (n\xi_N)}{ \e^{n\xi_{i+1}}}& \cdots & \frac{\sinh (n\xi_N)}{ \e^{n\xi_{N-1}}}& 	\lambda_{N}- \frac{1}{2 \e^{2n\xi_N}}
	\end{array}
	\right|,
	\end{aligned}
	\end{equation}
	where we only changed the first row and column. By virtue of the notation \eqref{eq:matP02} and the Laplace expansion theorem for determinant one thus has that
	\begin{equation}
	\begin{aligned}
	\left|\mathbf{M}_{i:N,c}^{(n)}\right|=\left(\lambda_{i}+\lambda_{i+1}\e^{2n(\xi_{i+1}-\xi_i)}\right)\left|\mathbf{M}_{i+1:N,c}^{(n)}\right|-\(\lambda_{i+1}^2-\frac{1}{4}\)\e^{2n(\xi_{i+1}-\xi_i)}\left|\mathbf{M}_{i+2:N,c}^{(n)}\right|
	\end{aligned}
	\end{equation}
	holds for all $N\geqslant 2$, $N\in \mathbb{N}$.
\end{proof}

\begin{rem}
	Compared to the radial case \cite{FangdengMMA23,DKLZ24}, we only perform the elementary transformation on the first row and column of the GPMs, see \eqref{ETFRC}. In fact, if we perform the elementary transformation like the radial case by
	\[
	\mathbf{I}_{N,(N-1)*(-\frac{\sinh(n\xi_N)}{\sinh(n\xi_{N-1})})}\mathbf{I}_{1,2*(-\e^{n(\xi_{i+1}-\xi_i)})}\mathbf{M}_{i:N,c}^{(n)}\mathbf{I}^T_{1,2*(-\e^{n(\xi_{i+1}-\xi_i)})}\mathbf{I}^T_{N,(N-1)*(-\frac{\cosh(n\xi_N)}{\cosh(n\xi_{N-1})})},
	\]
	there would be a rather lengthy and tedious term
	\[
%	\begin{aligned}
%	&\lambda_{N}- \frac{1}{2 \e^{2n\xi_N}}+\(-\frac{\sinh(n\xi_N)}{\sinh(n\xi_{N-1})}\)\(-\frac{\cosh (n\xi_N)}{\e^{n\xi_{N-1}}}\)+\(\frac{\sinh (n\xi_N)}{ \e^{n\xi_{N-1}}}+(-\frac{\sinh(n\xi_N)}{\sinh(n\xi_{N-1})})(\lambda_{N-1}- \frac{1}{2 \e^{2n\xi_{N-1}}})\)\(-\frac{\cosh(n\xi_N)}{\cosh(n\xi_{N-1})}\)\\
%	=&
	\lambda_{N}- \frac{1}{2 \e^{2n\xi_N}}+\frac{\sinh(n\xi_N)}{\sinh(n\xi_{N-1})}\frac{\cosh (n\xi_N)}{\e^{n\xi_{N-1}}}-\(\frac{\sinh (n\xi_N)}{ \e^{n\xi_{N-1}}}-(\frac{\sinh(n\xi_N)}{\sinh(n\xi_{N-1})})(\lambda_{N-1}- \frac{1}{2 \e^{2n\xi_{N-1}}})\)\(\frac{\cosh(n\xi_N)}{\cosh(n\xi_{N-1})}\).
%	\end{aligned}
	\]
	Thus, we give a more general form of Theorem \ref{thm52} (see \eqref{MiNc} and \eqref{MiNs}) and relatively simplify the proof by backward induction.
\end{rem}

\begin{proof}[\bf Proof of Theorem \ref{thm52}]
	We shall use backward induction to prove Theorem \ref{thm52}.
	For this, we claim that for $i = 1,2,\ldots,N$,
	\begin{equation}\label{MiNc}
	\left|\mathbf{M}_{i:N,c}^{(n)}\right|=\left\{
	\begin{array}{ll}
	(-1)^{\lfloor \frac{N+1-i}{2}\rfloor}\left(\sum\limits_{k=0}^{N+1-i}  \frac{\lambda^{N+1-i-k}}{2^k}\left(\sum\limits_{\mathbf{i}_{k}\in C_{N+1-i}^{i-1,k}}\tau_{\mathbf{i}_{k}}\e^{ 2 n\sum_{l=1}^k (-1)^{l}\xi_{i_l}}\right)\right), &  N \mbox{ is odd,}\\
	(-1)^{i-1}(-1)^{\lfloor \frac{N+1-i}{2}\rfloor}\left(\sum\limits_{k=0}^{N+1-i}  \frac{\lambda^{N+1-i-k}}{2^k}\left(\sum\limits_{\mathbf{i}_{k}\in C_{N+1-i}^{i-1,k}}\tau_{\mathbf{i}_{k}}\e^{ 2 n\sum_{l=1}^k (-1)^{l}\xi_{i_l}}\right)\right), &  N \mbox{ is even},
	\end{array}
	\right.
	\end{equation}	
	and
	\begin{equation}\label{MiNs}
	\left|\mathbf{M}_{i:N,s}^{(n)}\right|=\left\{
	\begin{array}{ll}
	(-1)^{\lfloor \frac{N+1-i}{2}\rfloor}\left(\sum\limits_{k=0}^{N+1-i}  \frac{\lambda^{N+1-i-k}}{(-2)^k}\left(\sum\limits_{\mathbf{i}_{k}\in C_{N+1-i}^{i-1,k}}\tau_{\mathbf{i}_{k}}\e^{ 2 n\sum_{l=1}^k (-1)^{l}\xi_{i_l}}\right)\right), &  N \mbox{ is odd,}\\
	(-1)^{i-1}(-1)^{\lfloor \frac{N+1-i}{2}\rfloor}\left(\sum\limits_{k=0}^{N+1-i}  \frac{\lambda^{N+1-i-k}}{(-2)^k}\left(\sum\limits_{\mathbf{i}_{k}\in C_{N+1-i}^{i-1,k}}\tau_{\mathbf{i}_{k}}\e^{ 2 n\sum_{l=1}^k (-1)^{l}\xi_{i_l}}\right)\right), &  N \mbox{ is even}.
	\end{array}
	\right.
	\end{equation}
	We only prove \eqref{MiNc}, and \eqref{MiNs} can be proved in a similar manner.
	By direct computations, we deduce that  \eqref{MiNc} holds for $i=N$.
	Suppose that \eqref{MiNc} holds for all $i\geqslant  N_0$, $N_0< N$, we show that it also holds for $i=N_0-1$.
	
		\textbf{Case i} $N$ is even. Since \eqref{eq:deter0102}  and \eqref{MiNc}  hold for $i\geqslant N_0$,  we obtain
		\begin{equation}
		\begin{aligned}
		&\quad \left|\mathbf{M}_{N_0-1:N,c}^{(n)}\right|=\left(\lambda_{N_0-1}+\lambda_{N_0}\e^{2n(\xi_{N_0}-\xi_{N_0-1})}\right)\left|\mathbf{M}_{N_0:N,c}^{(n)}\right|-\(\lambda_{N_0}^2-\frac{1}{4}\)\e^{2n(\xi_{N_0}-\xi_{N_0-1})}\left|\mathbf{M}_{N_0+1:N,c}^{(n)}\right| \\
		&=\left((-1)^{N_0-2}\lambda+(-1)^{N_0-1}\lambda\e^{2n(\xi_{N_0}-\xi_{N_0-1})}\right)\left|\mathbf{M}_{N_0:N,c}^{(n)}\right|-\(\lambda^2-\frac{1}{4}\)\e^{2n(\xi_{N_0}-\xi_{N_0-1})}\left|\mathbf{M}_{N_0+1:N,c}^{(n)}\right| \\
		&=(-1)^{N_0-2}\left(1-\e^{2n(\xi_{N_0}-\xi_{N_0-1})}\right)(-1)^{N_0-1}(-1)^{\lfloor \frac{N+1-N_0}{2}\rfloor}\lambda\left(\sum\limits_{k=0}^{N+1-N_0}  \frac{\lambda^{N+1-N_0-k}}{2^k}\left(\sum\limits_{\mathbf{i}_{k}\in C_{N+1-N_0}^{N_0-1,k}}\tau_{\mathbf{i}_{k}}\e^{ 2 n\sum_{l=1}^k (-1)^{l}\xi_{i_l}}\right)\right)\\
		&\quad -\(\lambda^2-\frac{1}{4}\)\e^{2n(\xi_{N_0}-\xi_{N_0-1})}(-1)^{N_0}(-1)^{\lfloor \frac{N-N_0}{2}\rfloor}\left(\sum\limits_{k=0}^{N-N_0}  \frac{\lambda^{N-N_0-k}}{2^k}\left(\sum\limits_{\mathbf{i}_{k}\in C_{N-N_0}^{N_0,k}}\tau_{\mathbf{i}_{k}}\e^{ 2 n\sum_{l=1}^k (-1)^{l}\xi_{i_l}}\right)\right).
		\end{aligned}
		\end{equation}
		We set
		\begin{equation}\label{ABk}
		\begin{aligned}
		&\mathfrak{A}_k:=\sum_{\mathbf{i}_{k}\in C_{N-N_0+1}^{N_0-1,k}}\tau_{\mathbf{i}_{k}}\e^{ 2 n\sum_{l=1}^k (-1)^{l}\xi_{i_l}},
		\quad \mathfrak{B}_k:=\sum_{\mathbf{i}_{k}\in C_{N-N_0}^{N_0,k}}\tau_{\mathbf{i}_{k}}\e^{ 2 n\sum_{l=1}^k (-1)^{l}\xi_{i_l}}.
		\end{aligned}
		\end{equation}
		Then there holds the following relation for all $k = 1,2,\ldots,N-N_0$,
		\begin{equation}\label{eq:solpfmn03}
		\mathfrak{A}_k-\mathfrak{B}_k=(-1)^{N_0}\e^{-2n\xi_{N_0}}\sum_{\mathbf{i}_{k-1}\in C_{N-N_0}^{N_0,k-1}}\tau_{\mathbf{i}_{k-1}}\e^{ -2 n\sum_{l=1}^{k-1} (-1)^{l}\xi_{i_l}}.
		\end{equation}
		By direct computations, we deduce that
		\[
		\begin{aligned}
		\left|\mathbf{M}_{N_0-1:N,c}^{(n)}\right|&=(-1)^{N_0-2}(-1)^{\lfloor \frac{N-N_0+2}{2}\rfloor}\left(\left(1-\e^{2n(\xi_{N_0}-\xi_{N_0-1})}\right)\lambda
		\left(\sum_{k=0}^{N-N_0+1} \frac{\lambda^{N-N_0+1-k}}{2^k}\mathfrak{A}_k\right)\right.\\
		&
		\left.\quad\quad\quad\quad\quad\quad\quad\quad\quad\;\;+\(\lambda^2-\frac{1}{4}\)\e^{2n(\xi_{N_0}-\xi_{N_0-1})}\left(\sum_{k=0}^{N-N_0} \frac{\lambda^{N-N_0-k}}{2^k}\mathfrak{B}_k\right)\right)\\
		&=:(-1)^{N_0-2}(-1)^{\lfloor \frac{N-N_0+2}{2}\rfloor}\left(\sum_{k=0}^{N-N_0+2}\frac{\lambda^{N-N_0+2-k}}{2^k}\mathrm{G}_k\right),
		\end{aligned}
		\]
		where we uesd the facts that
		\[
		(-1)^{\lfloor \frac{N-N_0+1}{2}\rfloor}(-1)^{\lfloor \frac{N-N_0+2}{2}\rfloor}=(-1)^{\lfloor \frac{N-N_0}{2}\rfloor}(-1)^{\lfloor \frac{N-N_0+2}{2}\rfloor}=-1,\;\mbox{ if }N-N_0 \mbox{ is even},
		\]
		and
		\[
		(-1)^{\lfloor \frac{N-N_0+1}{2}\rfloor}(-1)^{\lfloor \frac{N-N_0+2}{2}\rfloor}=1\;\mbox{ and }(-1)^{\lfloor \frac{N-N_0}{2}\rfloor}(-1)^{\lfloor \frac{N-N_0+2}{2}\rfloor}=-1, \;\mbox{ if }N-N_0 \mbox{ is odd}.
		\]
		It can be seen that
		\begin{equation}\label{GN-N_0+2}
		\mathrm{G}_{N-N_0+2}=-\e^{2n(\xi_{N_0}-\xi_{N_0-1})}\mathfrak{B}_{N-N_0}=\sum\limits_{\mathbf{i}_{N-N_0+2}\in C_{N-N_0+2}^{N_0-2,N-N_0-2}}\tau_{\mathbf{i}_{N-N_0+2}}\e^{ 2 n\sum_{l=1}^{N-N_0+2} (-1)^{l}\xi_{i_l}},
		\end{equation}
		and
		\begin{equation}\label{GN-N_0+1}
		\begin{aligned}
		\mathrm{G}_{N-N_0+1}&=\left(1-\e^{2n(\xi_{N_0}-\xi_{N_0-1})}\right)\mathfrak{A}_{N-N_0+1}-\e^{2n(\xi_{N_0}-\xi_{N_0-1})}\mathfrak{B}_{N-N_0-1}\\&=\left(1-\e^{2n(\xi_{N_0}-\xi_{N_0-1})}\right)\(\sum_{\mathbf{i}_{N-N_0+1}\in C_{N-N_0+1}^{N_0-1,N-N_0+1}}\tau_{\mathbf{i}_{N-N_0+1}}\e^{ 2 n\sum_{l=1}^{N-N_0+1} (-1)^{l}\xi_{i_l}}\)\\
		&\quad -\e^{2n(\xi_{N_0}-\xi_{N_0-1})}\(\sum_{\mathbf{i}_{N-N_0-1}\in C_{N-N_0}^{N_0,N-N_0-1}}\tau_{\mathbf{i}_{N-N_0-1}}\e^{ 2 n\sum_{l=1}^{N-N_0-1} (-1)^{l}\xi_{i_l}}\)\\
		& =\sum\limits_{\mathbf{i}_{N-N_0+1}\in C_{N-N_0+2}^{N_0-2,N-N_0+1}}\tau_{\mathbf{i}_{N-N_0+1}}\e^{ 2 n\sum_{l=1}^{N-N_0+1} (-1)^{l}\xi_{i_l}}.
		\end{aligned}
		\end{equation}
		%		
		%		\begin{equation}\label{G1}
		%		\begin{aligned}
		%		\mathrm{G}_1&=\left(1+t^1_{2}t^{N-1}_{N}-t^1_{2}-t^{N-1}_{N}\right)\mathfrak{A}_1+\left(1-t^1_{2}\right)t^{N-1}_{N}\mathfrak{B}_1 +t^1_{2}\(1-t^{N-1}_{N}\)\mathfrak{C}_1+t^1_{2}t^{N-1}_{N}\mathfrak{D}_1\\
		%		&\quad - \(1-t^1_{2}\)t^{N-1}_{N}- t^1_{2}\(1-t^{N-1}_{N}\)-2 t^1_{2}t^{N-1}_{N}\\
		%		& =\sum_{\mathbf{i}_2\in C_{N}^{0,2}}\tau_{\mathbf{i}_2}t^{i_{1}}_{i_{2}},
		%		\end{aligned}
		%		\end{equation}
		%		where the last equality follows by using the relations
		%		\[
		%		\begin{aligned}
		%		\mathfrak{A}_1&=\mathfrak{B}_1+\sum_{j=2}^{N-2}(-1)^{j+N-1}t^j_{N-1},  &\mathfrak{A}_1&=\mathfrak{C}_1+\sum_{j=3}^{N-1}(-1)^{2+j}t^2_j,\\
		%		\mathfrak{B}_1&=\mathfrak{D}_1+\sum_{j=3}^{N-2}(-1)^{2+j}t^2_j,  &\mathfrak{C}_1&=\mathfrak{D}_1+\sum_{j=3}^{N-2}(-1)^{j+N-1}t^j_{N-1}.
		%		\end{aligned}
		%		\]
		We can also deduce that
		\begin{equation}\label{G0}
		\mathrm{G}_{0}=\left(1-\e^{2n(\xi_{N_0}-\xi_{N_0-1})}\right)\mathfrak{A}_{0}+\e^{2n(\xi_{N_0}-\xi_{N_0-1})}\mathfrak{B}_{0} = 1,
		\end{equation}
		and
		\begin{equation}\label{G1}
		\begin{aligned}
		\mathrm{G}_{1}&=\left(1-\e^{2n(\xi_{N_0}-\xi_{N_0-1})}\right)\mathfrak{A}_{1}+\e^{2n(\xi_{N_0}-\xi_{N_0-1})}\mathfrak{B}_{1}\\
		&=\mathfrak{A}_{1}-\e^{2n(\xi_{N_0}-\xi_{N_0-1})}(\mathfrak{A}_{1}-\mathfrak{B}_{1})\\
		& = \sum_{\mathbf{i}_{1}\in C_{N-N_0+1}^{N_0-1,1}}\tau_{\mathbf{i}_{1}}\e^{ -2 n\xi_{i_l}}+(-1)^{N_0-1}\e^{-2n\xi_{N_0-1}}\\
		&=\sum_{\mathbf{i}_{1}\in C_{N-N_0+2}^{N_0-2,1}}\tau_{\mathbf{i}_{1}}\e^{ -2 n\xi_{i_l}}.
		\end{aligned}
		\end{equation}
		Furthermore, one can readily obtain that
		\begin{equation}\label{gk}
		\begin{aligned}
		\mathrm{G}_k=\left(1-\e^{2n(\xi_{N_0}-\xi_{N_0-1})}\right)\mathfrak{A}_{k}+\e^{2n(\xi_{N_0}-\xi_{N_0-1})}(\mathfrak{B}_{k}-\mathfrak{B}_{k-2})
		\end{aligned}
		\end{equation}
		for all $k=2,3, \ldots, N-N_0$.
		By using \eqref{ABk},  \eqref{eq:solpfmn03}, \eqref{gk} and some proper arrangements,  we can deduce that
		\begin{equation}\label{gk2}
		\begin{aligned}
		\mathrm{G}_k&=\left(1-\e^{2n(\xi_{N_0}-\xi_{N_0-1})}\right)\mathfrak{A}_{k}+\e^{2n(\xi_{N_0}-\xi_{N_0-1})}(\mathfrak{B}_{k}-\mathfrak{B}_{k-2})\\
		&=\mathfrak{A}_{k}-\e^{2n(\xi_{N_0}-\xi_{N_0-1})}(\mathfrak{A}_{k}-\mathfrak{B}_{k})-\e^{2n(\xi_{N_0}-\xi_{N_0-1})}\mathfrak{B}_{k-2}\\
		& = \sum_{\mathbf{i}_{k}\in C_{N-N_0+1}^{N_0-1,k}}\tau_{\mathbf{i}_{k}}\e^{ 2 n\sum_{l=1}^k (-1)^{l}\xi_{i_l}}+(-1)^{N_0-1}\e^{-2n\xi_{N_0-1}}\(\sum_{\mathbf{i}_{k-1}\in C_{N-N_0}^{N_0,k-1}}\tau_{\mathbf{i}_{k-1}}\e^{ -2 n\sum_{l=1}^{k-1} (-1)^{l}\xi_{i_l}}\)\\
		&\quad +(-1)^{2N_0+1}\e^{2n(\xi_{N_0}-\xi_{N_0-1})}\(\sum_{\mathbf{i}_{k-2}\in C_{N-N_0}^{N_0,k-2}}\tau_{\mathbf{i}_{k-2}}\e^{ 2 n\sum_{l=1}^{k-2} (-1)^{l}\xi_{i_l}}\)\\
		& = \sum_{\mathbf{i}_{k}\in C_{N-N_0+2}^{N_0-2,k}}\tau_{\mathbf{i}_{k}}\e^{ 2 n\sum_{l=1}^{k} (-1)^{l}\xi_{i_l}}
		\end{aligned}
		\end{equation}
		which completes the proof for the case that both $N_0$ and $N$ are even.
		
		\textbf{Case ii} $N$ is odd. The proof follows from a similar argument to the case that $N$ is even. We shall only briefly sketch it. First, it can be derived that
		\[
		\begin{aligned}
		\left|\mathbf{M}_{N_0-1:N,c}^{(n)}\right|
		%	&=(-1)^{N_0-2}\left(1-\e^{2n(\xi_{N_0}-\xi_{N_0-1})}\right)(-1)^{\lfloor \frac{N+1-N_0}{2}\rfloor}\lambda\left(\sum\limits_{k=0}^{N+1-N_0}  \frac{\lambda^{N+1-N_0-k}}{2^k}\mathfrak{A}_k\right)\\
		%	&\quad -\(\lambda^2-\frac{1}{4}\)\e^{2n(\xi_{N_0}-\xi_{N_0-1})}(-1)^{\lfloor \frac{N-N_0}{2}\rfloor}\left(\sum\limits_{k=0}^{N-N_0}  \frac{\lambda^{N-N_0-k}}{2^k}\mathfrak{B}_k\right)\\
		&=(-1)^{\lfloor \frac{N-N_0+2}{2}\rfloor}\left(\left(1-\e^{2n(\xi_{N_0}-\xi_{N_0-1})}\right)\lambda
		\left(\sum_{k=0}^{N-N_0+1} \frac{\lambda^{N-N_0+1-k}}{2^k}\mathfrak{A}_k\right)\right.\\
		&
		\left.\quad\quad\quad\quad\quad\quad\;+\(\lambda^2-\frac{1}{4}\)\e^{2n(\xi_{N_0}-\xi_{N_0-1})}\left(\sum_{k=0}^{N-N_0} \frac{\lambda^{N-N_0-k}}{2^k}\mathfrak{B}_k\right)\right)\\
		&=:(-1)^{\lfloor \frac{N-N_0+2}{2}\rfloor}\left(\sum_{k=0}^{N-N_0+2}\frac{\lambda^{N-N_0+2-k}}{2^k}\mathrm{G}_k\right).
		\end{aligned}
		\]
		Next, by using \eqref{GN-N_0+2}--\eqref{gk2}, we complete the proof for the case that  $N$ is odd.

	Taking $i=1$ in \eqref{MiNc} and \eqref{MiNs}, respectively, one immediately has
	that \eqref{cpbi-ktc} and \eqref{cpbi-kts} hold. 	The proof is complete.
\end{proof}

\begin{proof}[\bf Proof of Theorem \ref{thm53}]
	In Proposition \ref{NPspectral}, we obtain that $\mathbb{K}_N^*$ can be symmetrizable and its spectrum lies in the interval $[-1/2, 1/2]$. This, together with the fundamental theorem of algebra, implies that the polynomial $f^{\pm}_N(\lambda)$ has $N$ real roots that lies in the interval $[-1/2, 1/2]$. By the definition \eqref{eq:deffq01} of $f_N^{\pm}(\lambda)$, it is easy to see that if $f_N^+(\lambda^{+}) = 0$, then  $f_N^-(\lambda^{-}) = 0$.
\end{proof}

\subsection{Asymptotic profiles of the plasmon modes}

In this subsection, we study the asymptotic profiles of the plasmon modes. Specifically  we consider that the elliptic radius of the layers are extreme large and extreme small, respectively. We first study the large elliptic radius case, which reveals the relationship between the concentric disks case and the confocal ellipses case.

\begin{proof}[\bf Proof of Theorem \ref{thm54}]
Since we set $\xi_k = \tilde{\xi}+c_k$, where $\tilde{\xi}\gg 1$, and $c_1 >c_2> \ldots > c_{N}>0$ are regular constants, we can obtain that
\begin{equation}
\e^{ 2 n\sum_{l=1}^k (-1)^{l}\xi_{i_l}} =  O(\e^{-2n\tilde{\xi}}) \quad \mbox{ if }\;  k\;  \mbox{ is odd}.
\end{equation}
One can immediately have
\[
\begin{aligned}
f^{\pm}_N(\lambda)&=\sum\limits_{k=0}^{N}  \frac{\lambda^{N-k}}{(\pm 2)^k}\left(\sum\limits_{\mathbf{i}_{k}\in C_{N}^{0,k}}\tau_{\mathbf{i}_{k}}\e^{ 2 n\sum_{l=1}^k (-1)^{l}\xi_{i_l}}\right)\\
& =\sum\limits_{k=0}^{\lfloor \frac{N}{2}\rfloor}  \frac{\lambda^{N-2k}}{( 2)^{2k}}\left(\sum\limits_{\mathbf{i}_{2k}\in C_{N}^{0,2k}}\tau_{\mathbf{i}_{2k}}\e^{ 2 n\sum_{l=1}^{2k} (-1)^{l}\xi_{i_l}}\right)+O(\e^{-2n\tilde{\xi}}).
\end{aligned}
\]
The proof is complete.
\end{proof}

Next, we consider the small elliptic radius case, which is closely related to the geometrization phenomenon in the plasmon resonance.
Let
\begin{equation}\label{eq:coefg01}
h_{N,k}:=\sum_{\mathbf{i}_{k}\in C_{N}^{0,k}}\tau_{\mathbf{i}_{k}}.
\end{equation}
By direct computations, we have
\[
h_{N,1}=\sum_{\mathbf{i}_{1}\in C_{N}^{0,1}}\tau_{\mathbf{i}_{1}}=((-1)^N -1)/2, \quad h_{N,2}=\sum_{\mathbf{i}_{2}\in C_{N}^{0,2}}\tau_{\mathbf{i}_{2}}=- \lfloor N/2\rfloor,
\]
and
\[
h_{N,N}=\sum_{\mathbf{i}_{N}\in C_{N}^{0,N}}\tau_{\mathbf{i}_{N}}=(-1)^{N(N+1)/2}, \quad h_{N,2\lfloor N/2\rfloor}=\sum_{\mathbf{i}_{2\lfloor N/2\rfloor}\in C_{N}^{0,2\lfloor N/2\rfloor}}\tau_{\mathbf{i}_{2\lfloor N/2\rfloor}}=(-1)^{\lfloor N/2\rfloor}.
\]
Moreover, we can also observe that
\begin{equation}\label{eq:obse01}
h_{N, k}=h_{N-1, k}+(-1)^N h_{N-1, k-1}, \quad k= 2,3, \ldots, N-1,
\end{equation}
and
\begin{equation}\label{eq:obse02}
h_{2N, 2k-1}=\sum_{\mathbf{i}_{2k-1}\in C_{2N}^{0,2k-1}}\tau_{\mathbf{i}_{2k-1}}=0, \quad k=1, 2, \ldots, N.
\end{equation}

Next, we give some recursion formulae for $h_{N,k}$ in the following lemma.
\begin{lem}\textup{\cite[Lemma 2.7]{FangdengMMA23}}\label{le:recco01}
	Let $h_{N,k}$ be given by \eqref{eq:coefg01}. Then there holds
	\begin{equation}\label{eq:lerec01}
	h_{2N+1,2k}=h_{2N,2k},  \quad k=1, 2, \ldots, N,
	\end{equation}
	\begin{equation}\label{eq:lerec02}
	h_{2N+2,2k}=h_{2N+1,2k}-h_{2N+1,2k-2},  \quad k=2, 3, \ldots, N,
	\end{equation}
	and
	\begin{equation}
	h_{2N+1,2k-1}=h_{2N-1,2k-1}-h_{2N-1,2k-3},  \quad k=2, 3, \ldots, N.
	\end{equation}
\end{lem}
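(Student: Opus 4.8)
The plan is to collect the scalars $h_{N,k}$ into one generating polynomial and to read all three identities off an explicit factorization, so that the combinatorial content reduces to Pascal's rule. First I would establish the generating identity
\[
P_N(x):=\sum_{k=0}^{N} h_{N,k}\,x^{k}=\prod_{j=1}^{N}\bigl(1+(-1)^{j}x\bigr),
\]
which is immediate from the definition \eqref{eq:coefg01}: selecting the term $(-1)^{j}x$ from the $j$-th factor corresponds exactly to placing the index $j$ into a combination $\mathbf{i}_k\in C_N^{0,k}$, and the resulting monomial carries precisely the sign $\tau_{\mathbf{i}_k}$ together with $x^{k}$.

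The key simplification is to group the factors by the parity of $j$: each odd $j\in\{1,\dots,N\}$ contributes $(1-x)$ and each even $j$ contributes $(1+x)$. Counting the odd and even indices in the two cases $N=2M$ and $N=2M+1$ gives the closed forms
\[
P_{2M}(x)=(1-x^{2})^{M},\qquad P_{2M+1}(x)=(1-x)(1-x^{2})^{M}.
\]
Extracting the coefficient of $x^{k}$ then yields $h_{2M,2j}=(-1)^{j}\binom{M}{j}$ with all odd-index terms vanishing (consistent with \eqref{eq:obse02}), together with $h_{2M+1,2j}=(-1)^{j}\binom{M}{j}$ and $h_{2M+1,2j+1}=-(-1)^{j}\binom{M}{j}$ for the odd total.

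With these formulas in hand each recursion becomes a one-line verification. For \eqref{eq:lerec01} both sides equal $(-1)^{k}\binom{N}{k}$, since the extra factor $(1-x)$ does not affect the even-degree coefficients of $P_{2N+1}$. For \eqref{eq:lerec02} and the third identity the right-hand sides are, respectively, $(-1)^{k}\bigl(\binom{N}{k}+\binom{N}{k-1}\bigr)$ and $(-1)^{k}\bigl(\binom{N-1}{k-1}+\binom{N-1}{k-2}\bigr)$, which collapse to $(-1)^{k}\binom{N+1}{k}=h_{2N+2,2k}$ and $(-1)^{k}\binom{N}{k-1}=h_{2N+1,2k-1}$ by Pascal's identity. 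The only care needed is the parity bookkeeping in the index shifts; there is no analytic obstacle here. Alternatively, one could avoid generating functions altogether and derive all three relations by iterating the one-step recursion \eqref{eq:obse01} twice and invoking the vanishing \eqref{eq:obse02}, which is the route that stays closest to the quantities already introduced in the text.
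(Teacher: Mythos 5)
Your proof is correct. Note that the paper itself does not prove this lemma---it is quoted from \cite[Lemma 2.7]{FangdengMMA23}---and the ingredients the paper records just before it, namely the one-step recursion \eqref{eq:obse01} and the vanishing \eqref{eq:obse02}, point to the recursion-iteration argument that you mention only as an alternative in your last sentence; that is the route closest to the cited source. Your main argument via the generating polynomial is genuinely different and in fact stronger: from $\sum_{k=0}^{N}h_{N,k}x^{k}=\prod_{j=1}^{N}\bigl(1+(-1)^{j}x\bigr)$, which is immediate from \eqref{eq:coefg01}, the parity grouping gives $(1-x^{2})^{M}$ for $N=2M$ and $(1-x)(1-x^{2})^{M}$ for $N=2M+1$, hence the closed forms $h_{2M,2j}=(-1)^{j}\binom{M}{j}$, $h_{2M+1,2j}=(-1)^{j}\binom{M}{j}$, $h_{2M+1,2j+1}=-(-1)^{j}\binom{M}{j}$, and each of the three identities reduces to Pascal's rule; the sign bookkeeping checks out (for \eqref{eq:lerec02} both sides equal $(-1)^{k}\binom{N+1}{k}$, and for the third identity both sides equal $(-1)^{k}\binom{N}{k-1}$). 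What your route buys beyond the lemma: it delivers the vanishing \eqref{eq:obse02} and the binomial formula \eqref{elmcombin}---which the paper invokes separately ``by elementary combination theory'' in the proof of Theorem \ref{thm55}---as free corollaries, so the small-radius asymptotics there could be streamlined. The recursion route, by contrast, stays entirely within quantities already displayed in the text (one application of \eqref{eq:obse01} plus \eqref{eq:obse02} gives \eqref{eq:lerec01} directly, and two applications give the other two identities) and avoids introducing generating functions; either argument is complete.
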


In what follows, we begin with the proof of Theorem \ref{thm55}.

\begin{proof}[\bf Proof of Theorem \ref{thm55}]
By using \eqref{eq:deffq01} and the smallness assumption on the elliptic radii $\xi_k$, $k=1,2,\ldots,N$, the associated polynomial $f^{\pm}_N(\lambda)$ is then given by
\begin{equation}\label{smallelpradii}
f^{\pm}_N(\lambda)=\sum\limits_{k=0}^{N}  \frac{\lambda^{N-k}}{(\pm 2)^k}h_{N,k}+o(1).
\end{equation}

\textbf{Case i} $N$ is even. From \eqref{eq:obse02} and \eqref{smallelpradii}, we can obtain that
\begin{equation}\label{oddeq0}
f^{\pm}_N(\lambda)=\sum\limits_{k=0}^{N/2}  \frac{\lambda^{N-2k}}{(\pm 2)^{2k}}h_{N,2k}+o(1),
\end{equation}
where $h_{N,2k}$ can be calculated by using  recursion formulae in Lemma \ref{le:recco01}.
By using elementary combination theory, we can deduce that
\begin{equation}\label{elmcombin}
h_{N,2k}=(-1)^kC_{ N/2 }^k,
\end{equation}
where $C_{ N/2 }^k$ denotes the number of combinations for $k$ out of $ N/2 $.
It follows from \eqref{oddeq0}--\eqref{elmcombin} that
\[
f^{\pm}_N(\lambda) = \sum\limits_{k=0}^{N/2}  \frac{(\lambda^2)^{N/2-k}}{4^{k}}(-1)^kC_{ N/2 }^k+o(1) = \left(\lambda^2-\frac{1}{4}\right)^{N/2}+o(1).
\]

\textbf{Case ii} $N$ is odd.
From \eqref{eq:obse01}--\eqref{eq:lerec01},  it can be readily seen that
\begin{equation}\label{oddN}
h_{N,k}=\left\{
\begin{array}{ll}
-h_{N-1,k-1}, & k \quad \mbox{is odd},\\
h_{N-1,k}, &k \quad \mbox{is even}.
\end{array}
\right.
\end{equation}
By using \eqref{smallelpradii}, \eqref{elmcombin} and \eqref{oddN}, we have
\begin{equation}
\begin{aligned}
f^{\pm}_N(\lambda)&=\sum_{k=0}^{(N-1)/2}\frac{\lambda^{N-2k}}{(\pm2)^{2k}}h_{N,2k}+
\sum_{k=1}^{(N+1)/2}\frac{\lambda^{N-(2k-1)}}{(\pm2)^{2k-1}}h_{N,2k-1}+o(1)\\
&=\sum_{k=0}^{(N-1)/2}\frac{\lambda^{N-2k}}{(\pm2)^{2k}}h_{N-1,2k}-
\sum_{k=1}^{(N+1)/2}\frac{\lambda^{N-(2k-1)}}{(\pm2)^{2k-1}}h_{N-1,2k-2}+o(1)\\
&=\sum_{k=0}^{(N-1)/2}\frac{\lambda^{N-2k}}{(\pm2)^{2k}}h_{N-1,2k}-
\sum_{k=0}^{(N-1)/2}\frac{\lambda^{N-(2k+1)}}{(\pm2)^{2k+1}}h_{N-1,2k}+o(1)\\
&=\left(\lambda\mp\frac{1}{2}\right)\sum_{k=0}^{(N-1)/2}\frac{\lambda^{N-1-2k}}{(\pm2)^{2k}}h_{N-1,2k}+o(1)\\
&=\left(\lambda\mp\frac{1}{2}\right)\sum_{k=0}^{(N-1)/2}\frac{(\lambda^2)^{(N-1)/2-k}}{4^{k}}(-1)^kC_{ (N-1)/2 }^k+o(1)\\
& = \left(\lambda\mp\frac{1}{2}\right)\left(\lambda^2-\frac{1}{4}\right)^{\frac{N-1}{2}}+o(1).
\end{aligned}
\end{equation}
The proof is complete.
\end{proof}

\section{Numerical results}\label{numer_reslt}\label{sec7}
In this section, we present some numerical examples to corroborate our theoretical findings in the previous sections. From a theoretical point of view (see Theorems \ref{thm52}--\ref{thm53}), in order to find the plasmon modes for any multi-layered structure, it suffices to find the roots of the characteristic polynomial  	$f^{\pm}_N(\lambda)$ defined in \eqref{eq:deffq01}, namely, to find the eigenvalues of NP operator $\mathbb{K}_N^*$. 
Moreover,  it is also important to understand the electric field  distribution   associated with each plasmon mode.
We next investigate these scenarios numerically.

\subsection{Mode splitting}\label{ssec71}
In this subsection, we shall compute  the eigenvalues of NP operator $\mathbb{K}_N^*$.
We first consider the elliptic radius of layers are equidistance. Let $\sigma_{0}=1$.  For $N$-layer structure, set
\begin{equation}\label{eq:str01}
\xi_i=N-i+1, \quad i=1, 2, \ldots N.
\end{equation}
Note that the ellipses are of the same focus.
Define
$
\lambda_{\pm} = (\lambda_1^{\pm},\lambda_2^{\pm},\ldots,\lambda_N^{\pm}), 
$
where $\lambda_k^{\pm}$, $k=1,2,\ldots,N,$ is the root to $f_N^{\pm}(\lambda)$, respectively, we order them in descending way, that is, $\lambda_1^{\pm}>\lambda_2^{\pm}>\ldots>\lambda_N^{\pm}$.
Table \ref{tab:1111} shows the roots to the characteristic polynomials  $f^{\pm}_{15}(\lambda)$ with $n=1$,  respectively.
\begin{table}[h]
	\begin{center}
		\caption{Roots to the characteristic polynomial \eqref{eq:deffq01} in the setup \eqref{eq:str01}. \label{tab:1111}}
		\begin{tabular}{|c|cccccccc|}
			\hline
			&  & &  &$f^{+}_{15}(\lambda)$& with $n=1$   &  & & \\
			\hline
			$\lambda_+$ & 0.3205&0.3094& 0.2898& 0.2598&  0.2172& 0.1603& 0.0896& 0.0093\\
			$\sigma_1$ &  -4.5699& -4.2469& -3.7566& -3.1624&-2.5359& -1.9441&-1.4367& -1.0378\\
			\hline
			$\lambda_+$ &  -0.0725&  -0.1468& -0.2078&-0.2540& -0.2868& -0.3081   &- 0.3202&\\
			$\sigma_1$  &-0.7467&-0.5461&-0.4128&-0.3262&-0.2710&-0.2374&-0.2193&\\
			\hline
			&  & & &$f^{-}_{15}(\lambda)$& with $n=1$    &   &&  \\
			\hline
			$\lambda_-$ &0.3202&   0.3081& 0.2868& 0.2540& 0.2078&  0.1468& 0.0725& -0.0093\\
			$\sigma_1$  & -4.5605&-4.2123&-3.6894&-3.0657&-2.4225&-1.8312&-1.3391&-0.9636\\
			\hline
			$\lambda_-$ & -0.0896& -0.1603& -0.2172& -0.2598&   -0.2898&  -0.3094& -0.3205&\\
			$\sigma_1$& -0.6960&-0.5144&-0.3943&-0.3162&-0.2662&-0.2355&-0.2188& \\
			\hline
		\end{tabular}
	\end{center}
\end{table}
\begin{figure}[htbp]
	\centering
	\includegraphics[scale=0.4]{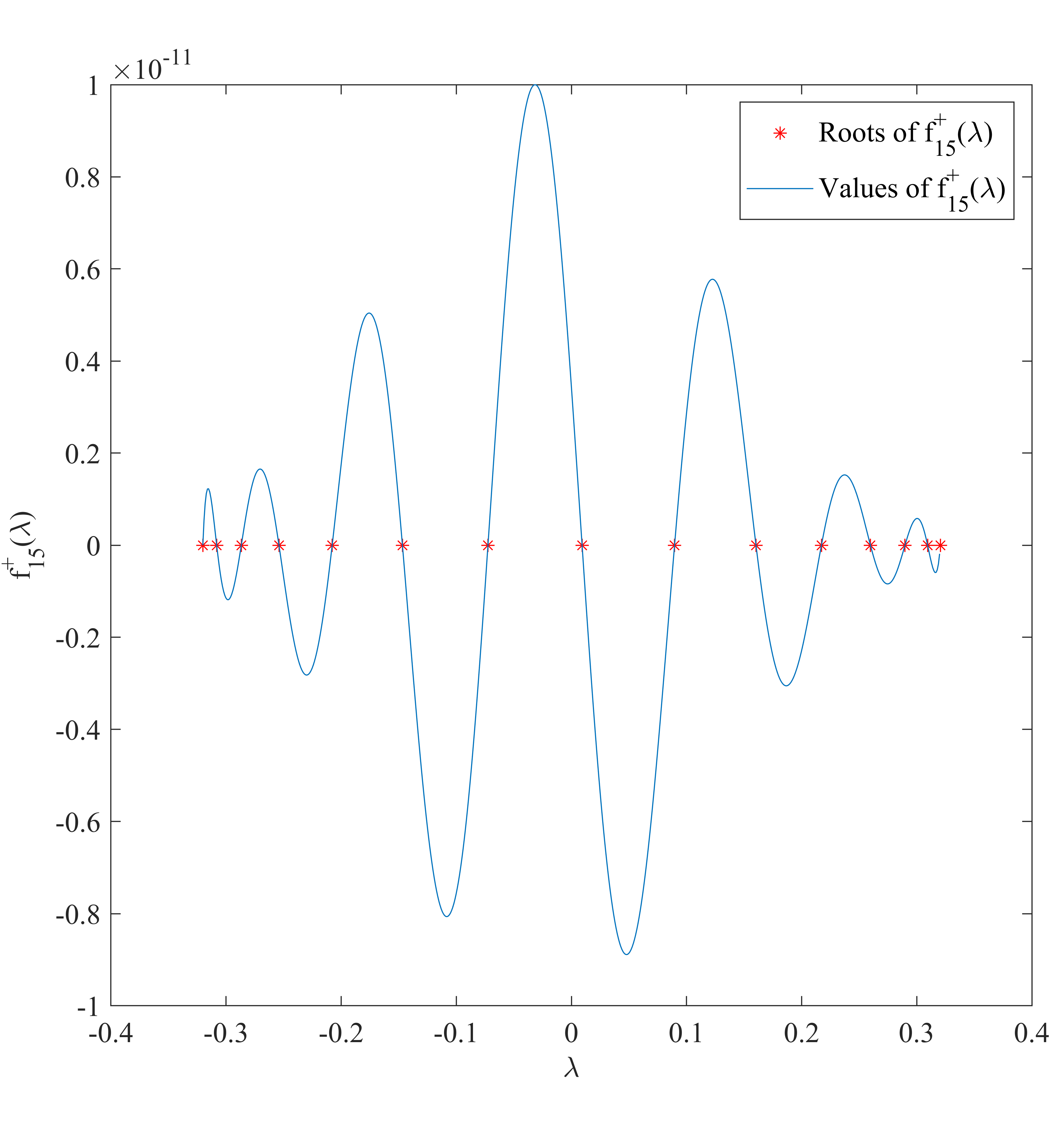}
	%\hspace{0.001in}
	\includegraphics[scale=0.4]{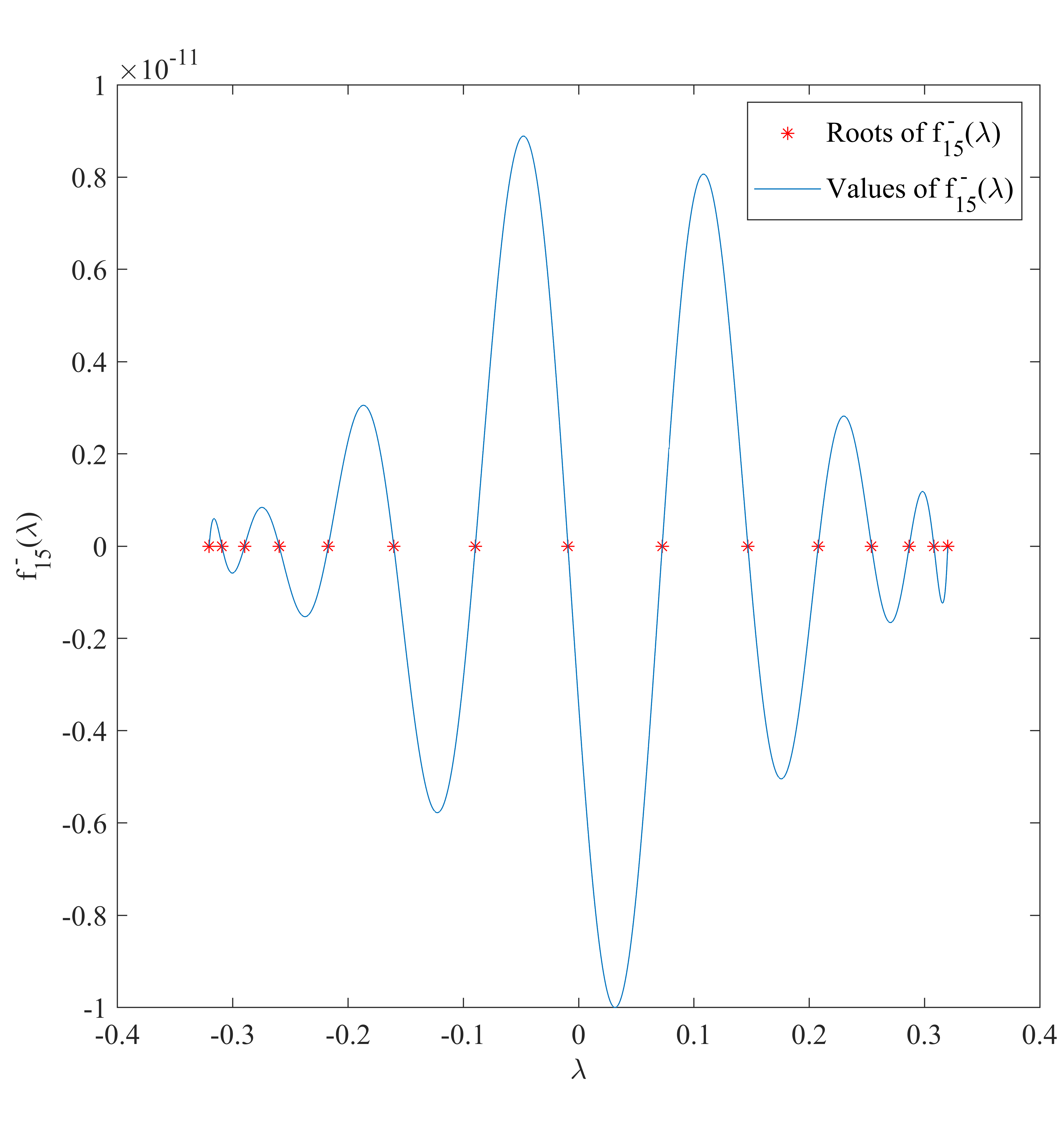}
	\captionsetup{skip=0.1pt}
	\caption{Graph on the left shows the values of $f^+_{15}(\lambda)$ and plot on
		the right  shows the values of $f^-_{15}(\lambda)$, respectively, in the setup \eqref{eq:str01} with $n=1$.}\label{fig:5}
\end{figure}

In Figure \ref{fig:5}, we plot the values of the polynomials $f^{\pm}_{15}(\lambda)$ with $n=1$ in the span  between the minimum and maximum eigenvalues, it is interesting that the values of the characteristic polynomial in the span are all very small (of the order $10^{-11}$). This is surprisingly useful in real applications, as it can be used to create a surface-plasmon-resonance-like (SPR-like) band that reduces the visibility of an object by several orders of magnitude simultaneously at multiple frequencies.

Next, we consider the radius of layers are decreasing with the same scale $s$, that is
\begin{equation}\label{eq:str02}
\xi_{i+1}=s\xi_i, \quad i=1, 2, \ldots N-1.
\end{equation}
Let $\xi_1=N$ and $s=0.8$. Table \ref{tab:222} presents all the roots to the characteristic polynomial  $f^{\pm}_{16}(\lambda)$ with $n=2$, respectively.  Similarly, in Figure \ref{fig:88}, one can also find out that the values of the polynomial $f_{16}^{\pm}(\lambda)$ with $n=2$ in the span  between the minimum and maximum eigenvalues  and the values of the polynomial $f^{\pm}_{16}(\lambda)$ with $n=2$ in the span  are all very close to zero. These  results in Tables \ref{tab:1111}-\ref{tab:222}  show excellent agreement to Theorem \ref{thm53}.
\begin{table}[h]
	\begin{center}
		\caption{Roots to the characteristic polynomial \eqref{eq:deffq01} in the setup \eqref{eq:str02}. \label{tab:222}}
		\begin{tabular}{|c|cccccccc|}
			\hline
			&  & &  &$f^{+}_{16}(\lambda)$& with $n=2$   &  & & \\
			\hline
			$\lambda_{+}$ & 0.4547&0.3831&0.2777&0.1671&0.0812&0.0296&0.0068&0.0007\\
			$\sigma_1$ &-21.0821&-7.5514&-3.4990&-2.0042&-1.3877&-1.1258&-1.0274&-1.0027\\
			\hline
			$\lambda_{+}$ &-0.0009&-0.0079&-0.0327&-0.0871&-0.1754&-0.2857&-0.3878&-0.4563\\
			$\sigma_1$  &-0.9966&-0.9690&-0.8771&-0.7033&-0.4807&-0.2728&-0.1263&-0.0457\\
			\hline
			&  & & &$f^{-}_{16}(\lambda)$& with $n=2$    &   &&  \\
			\hline
			$\lambda_-$ &0.4563&0.3878&0.2857&0.1754&0.0871&0.0327&0.0079&0.0009\\
			$\sigma_1$  & -21.9003&-7.9149&-3.6661&-2.0803&-1.4219&-1.1401&-1.0320&-1.0034\\
			\hline
			$\lambda_-$ &-0.0007&-0.0068&-0.0296&-0.0812&-0.1671&-0.2777&-0.3831&-0.4547\\
			$\sigma_1$&-0.9973&-0.9733&-0.8882&-0.7206&-0.4989&-0.2858&-0.1324&-0.0474
			\\
			\hline
		\end{tabular}
	\end{center}
\end{table}
%%%%%%%%%%%%%%%%%%%%%%%%%
%%%%%%%%%%%%%%%%%%%%%%%%%
\begin{figure}[h]
	\centering
	\includegraphics[scale=0.4]{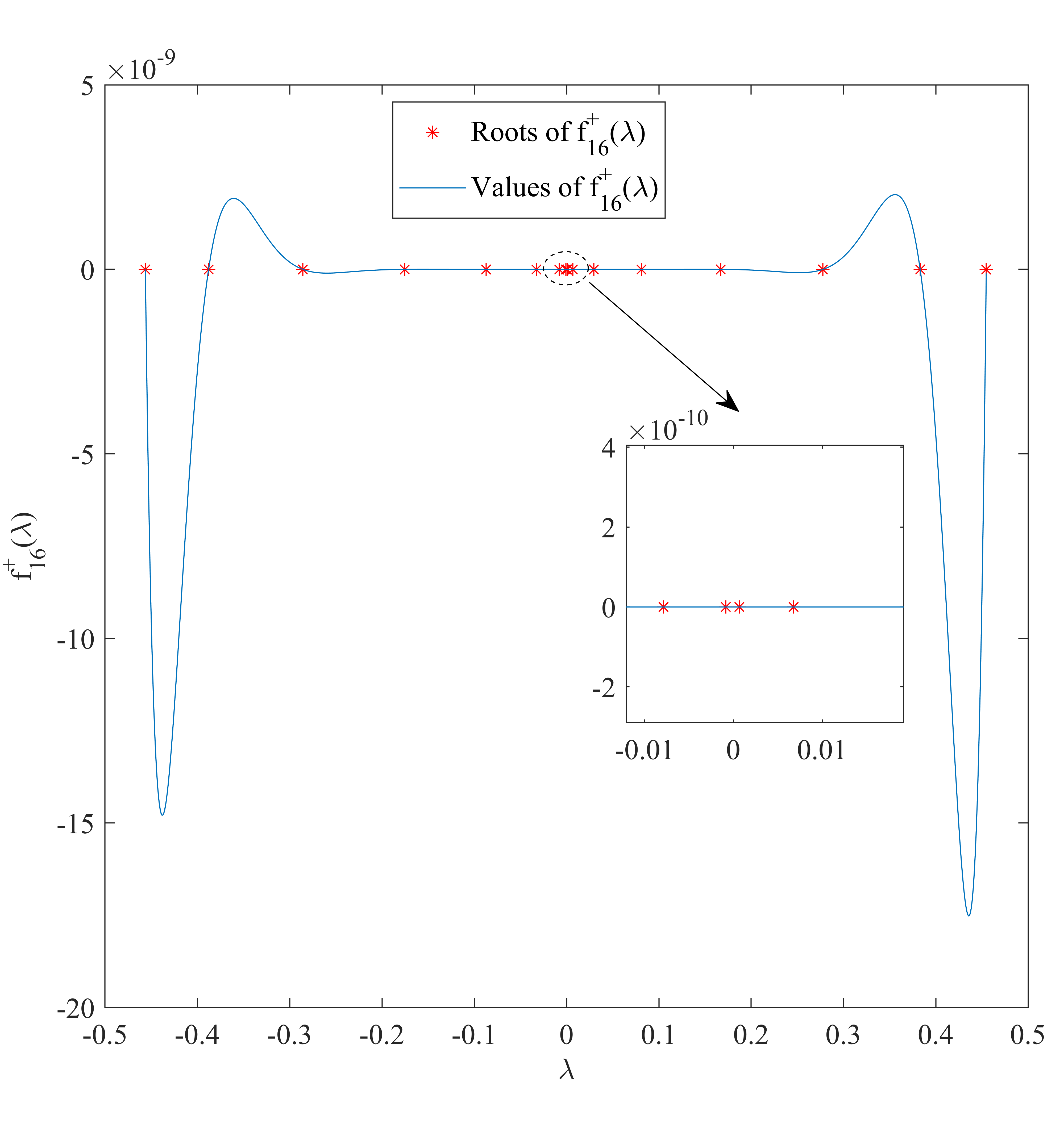}
	%\hspace{0.001in}
	\includegraphics[scale=0.4]{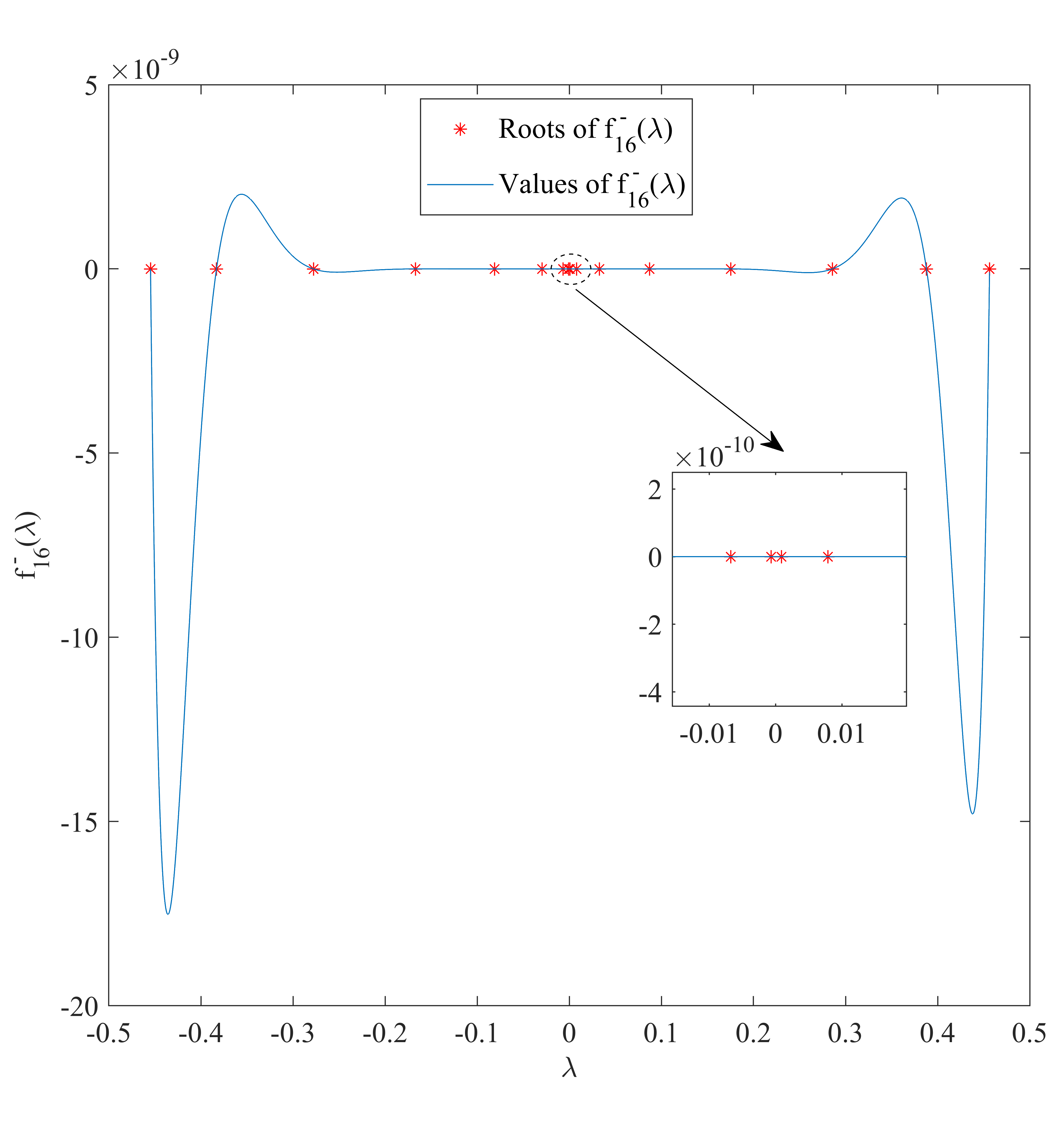}
	\captionsetup{skip=0.1pt}
	\caption{Graph on the left shows the values of $f^+_{16}(\lambda)$ and plot on the right  shows the values of $f^-_{16}(\lambda)$, respectively, in the setup \eqref{eq:str02} with $n=2$.}\label{fig:88}
\end{figure}
%\begin{table}[h]
%	\begin{center}
%		\caption{Roots to the characteristic polynomial \eqref{eq:deffq01} in the setup \eqref{eq:str02}. \label{tab:333}}
%		\begin{tabular}{|c|cccccccc|}
%			\hline
%			&  & &  &$f^{\pm}_{24}(\lambda)$& with $n=1$   &  & & \\
%			\hline
%			$\lambda$ & 0.3225&0.3179&0.3100&0.2984&0.2828&0.2626&0.2372&0.2060\\
%			$\sigma_1$ &-4.6340&-4.4915&-4.2621&-3.9613&-3.6045&-3.2123&-2.8048&-2.4017\\
%			\hline
%			$\lambda$ &0.1689&0.1259&0.0778&0.0263&-0.0263&-0.0778&-0.1259&-0.1689\\
%			$\sigma_1$  &-2.0200&-1.6728&-1.3687&-1.1113&-0.8999&-0.7306&-0.5978&-0.4951\\
%			\hline
%			$\lambda$ &-0.2060&-0.2372&-0.2626&-0.2828&-0.2984&-0.3100&-0.3179&-0.3225\\
%			$\sigma_1$  &-0.4164&-0.3565&-0.3113&-0.2774&-0.2524&-0.2346&-0.2226&-0.2158\\
%			\hline
%		\end{tabular}
%	\end{center}
%\end{table}

As previously discussed, concentric discs represent a special case where the radius of the confocal ellipse is sufficiently large (see Theorem \ref{thm54}). In order to verify this numerically, we let $\xi_1 = L*N$ and $s = 0.8$ in the setup \eqref{eq:str02}. Set $N=17$ and $n=1$.
Figure \ref{fig:9} shows that the norm of $|\lambda_{+}-\lambda_{-}|$ gradually decreases to zero with increasing $L$, which reveals the effect of rotational symmetry breaking  on the splitting of the plasmon modes, and corroborates the theoretical results \eqref{ellip2disk}.
We recover the result that the plasmon modes that split into even and odd parities in multi-layered confocal ellipses are degenerate into  plasmon modes in multi-layered concentric discs in the limiting case $\tilde{\xi}\to\infty$.

\begin{figure}[h]
	\centering
	\includegraphics[scale=0.35]{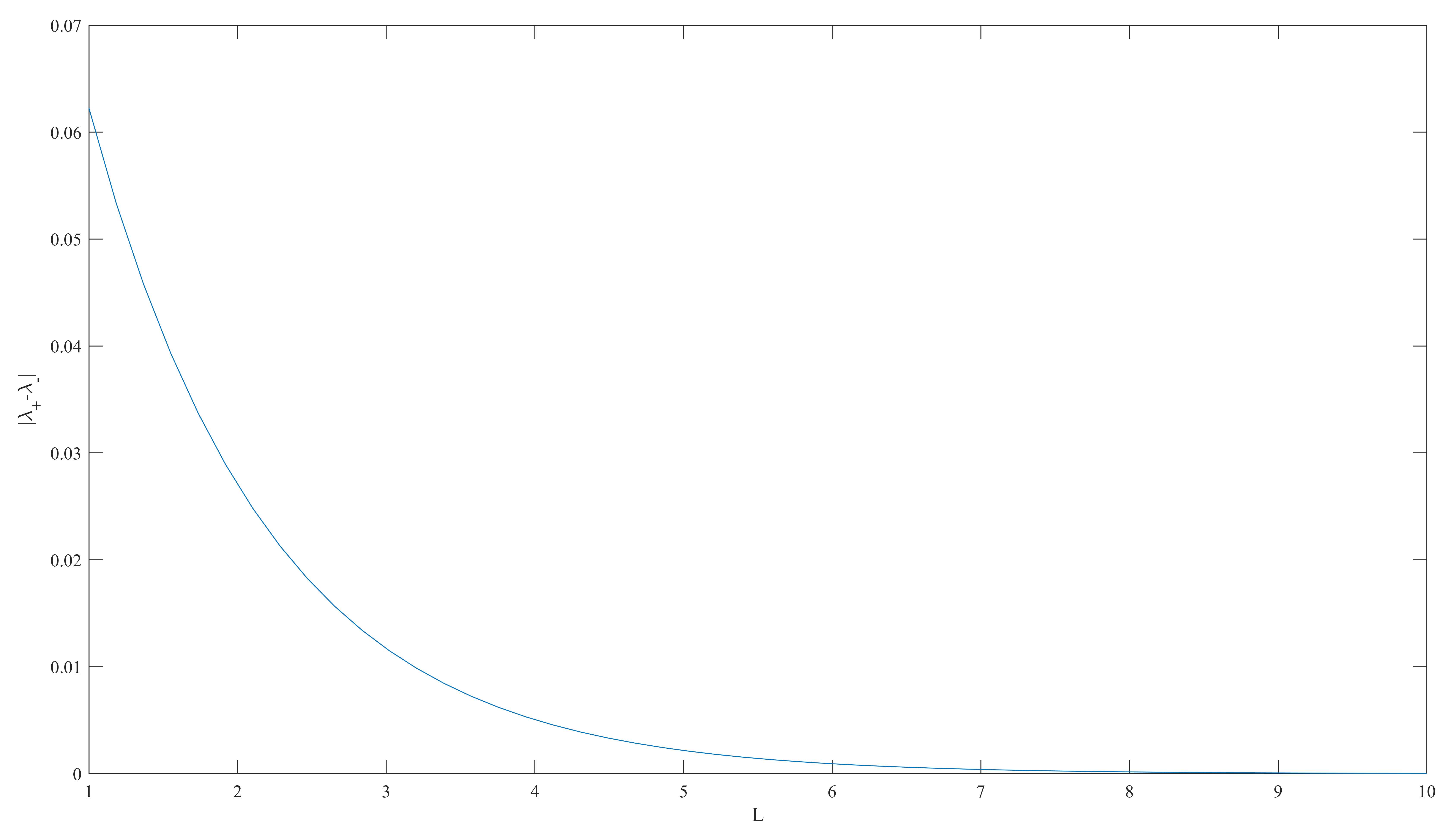}
	\captionsetup{skip=3pt}
	\caption{The value of $|\lambda_{+}-\lambda_{-}|$  in the setup \eqref{eq:str02} with $\xi_1 = L*N$, $s = 0.8$, $N=17$ and $n=1$.}\label{fig:9}
\end{figure}

\subsection{Surface localization and geometrization of plasmon resonant modes}
In subsection \ref{ssec71} above, we see that for the $N$-layer confocal ellipses and any fixed angular momentum $n\geqslant 1$, the $2N$ values of $\lambda\in [-1/2,1/2]$ such that there
exists a non-trivial solution to \eqref{cductvt_transmi_prblm}.  Then, the plasmon resonance occurs. Next, we seek to understand the distribution of the electric field associated with each plasmon mode for multi-layered structures based on Corollary \ref{cor51}.
To this end, we first introduce the Drude model for modeling the parameter $\sigma_1$ with the operating frequency $\omega$
\begin{equation}\label{eq:Drude1}
\sigma_1=\sigma_1(\omega)=\sigma'\left(1-\frac{\omega_p^2}{\omega(\omega+\mathrm{i}\tau)}\right),
\end{equation}
where we suppose that (cf. \cite{sarid_challener_2010}) 
$$\tau = 10^{14}\, s^{-1}; \sigma' = 9 \cdot 10^{-12} F \, m^{-1}; \sigma_0 = (1.33)^2 \sigma'; \omega_p = 2 \cdot 10^{15} s^{-1}.$$
\begin{figure}[htbp]
	\centering
	\includegraphics[scale=0.8]{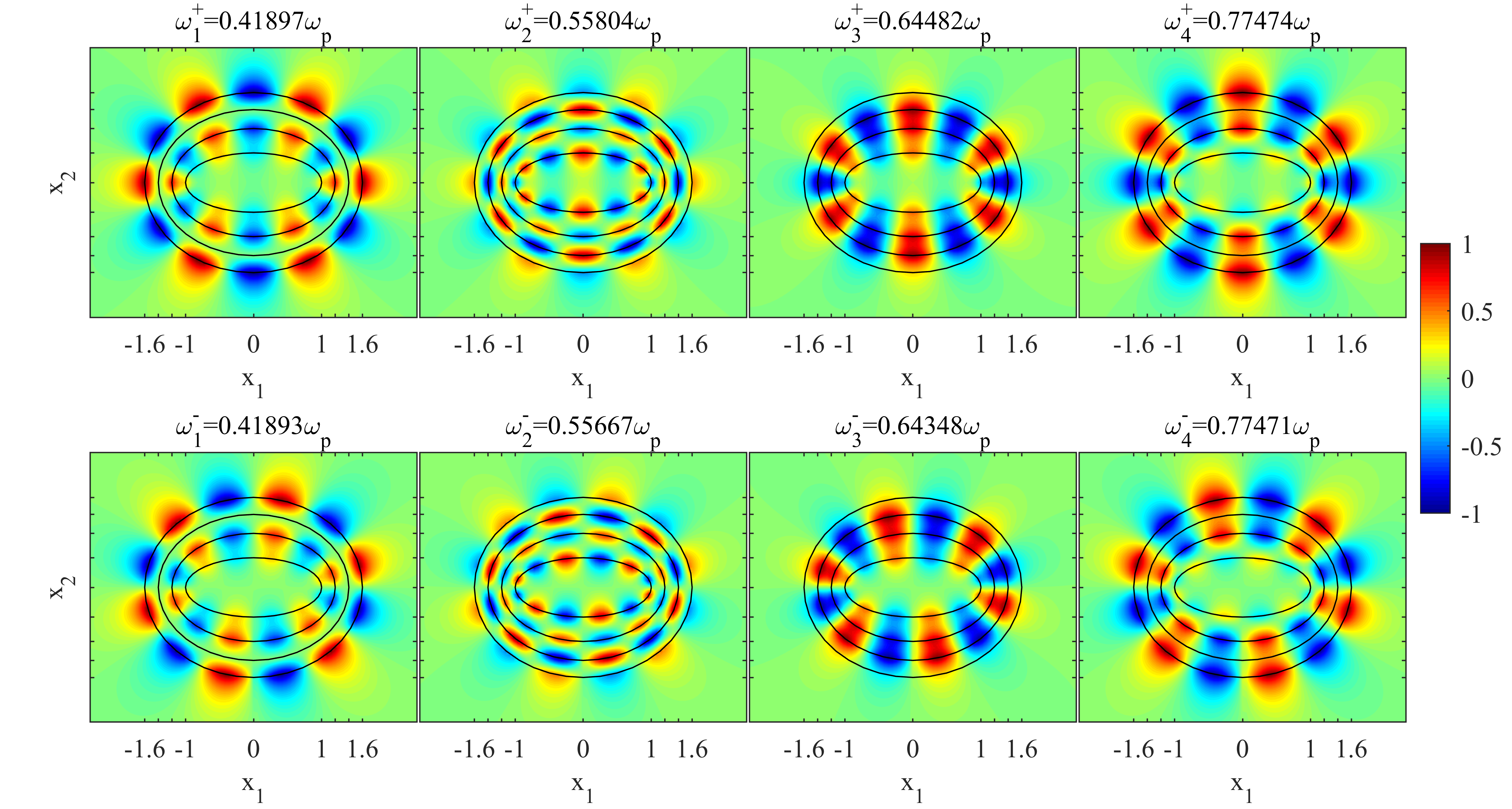}
	%\captionsetup{skip=0.1pt}
	\caption{The real part of the  perturbed electric fields  $u^{\pm}_1,u^{\pm}_2,u^{\pm}_3,u^{\pm}_4$ with $n=6$ for the four-layer confocal ellipses designed by \eqref{SMED} represented as solid black lines. Each plot corresponds to one of the eight plasmon resonance frequencies. The upper plot displays the even,  the lower plot the odd	plasmon modes.
	}\label{fig:10}
\end{figure}
\begin{figure}[htbp]
	\centering
	\includegraphics[scale=0.8]{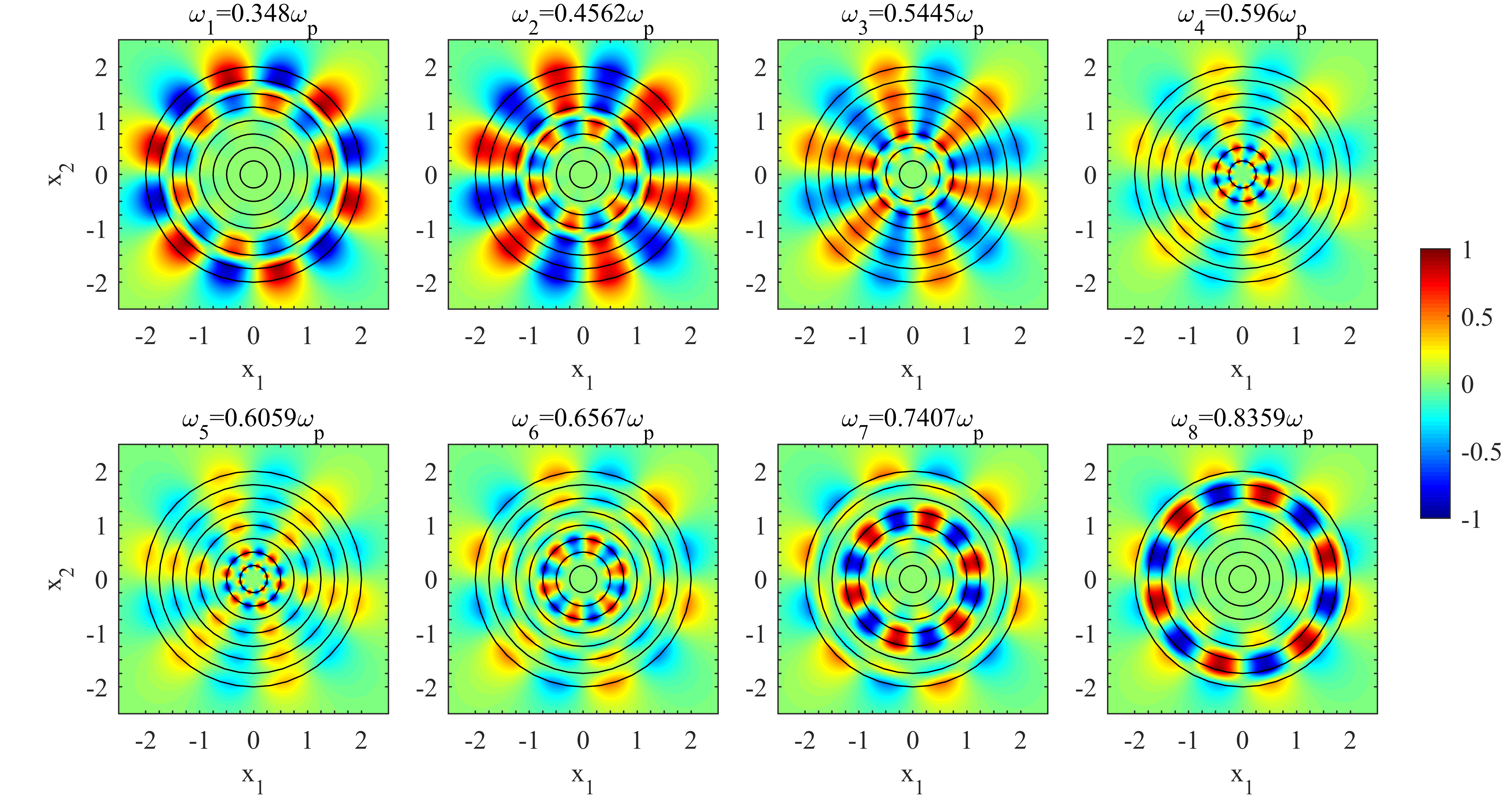}
%	\captionsetup{skip=0.1pt}
	\caption{The real part of the  perturbed electric fields  $u_1,u_2,\ldots,u_8$ with $n=6$ for the eight-layer concentric disks represented as solid black lines. Each plot corresponds to one of the eight plasmon resonance frequencies.}\label{fig:11}
\end{figure}

We consider the multi-layered confocal ellipses with equidistant semi-major axis. For $N$-layer structure, set $b_i = R\cosh\xi_i$ satisfying
\begin{equation}\label{SMED}
b_{i-1} = b_{i}+T,  \quad i=1, 2, \ldots N.
\end{equation}
Let $R = 0.9$, $b_N = 1$ ,and $T=0.2$. The eight plasmon resonant modes in the four-layer confocal ellipses designed by \eqref{SMED}  are shown in Figure \ref{fig:10}.
We fix the scaling to be such that the electric field is normalized to the interval $[-1, 1]$.
 The general splitting of the modes results from a breaking of rotational symmetry; however  the modes specifically split into twice the number of layers because the multi-layered confocal ellipses still possesses two mirror planes. For the single-layer $m$-fold rotationally symmetric, we refer to \cite{JKMA23}.

For the multi-layered structures with rotationally symmetric geometry: concentric disks, the mode splitting in it is not owing to symmetry breaking.  The reason
for the mode splitting in it is owing to truncation of materials \cite{FangdengMMA23,KZDF}, and  becomes evident from the eight plasmon  resonant modes in the eight-layer concentric disks in Figure \ref{fig:11}.  We can see that there are localized surface plasmons excited at almost each surface in multi-layered concentric disks. Because of the finite penetration depth of the light, some inner plasmon is
not excited.  There is thus a clear connection between the breaking of rotational symmetry in the $N$-layer confocal ellipses and plasmon hybridization in the $2N$-layer concentric disks.

As mentioned above, the plasmon resonance occurs locally near the high-curvature point of the plasmonic inclusion, as the curvature of that boundary point increases to a certain degree. To verify this numerically, we let $R = 1$, $b_N = 1.0001$, and $T = 0.00045$ in \eqref{SMED}.
The six plasmon resonant modes in the three-layer confocal ellipses  are shown in Figure \ref{fig:12}.
It is clearly shown that the gradient fields behave much stronger near the high curvature parts of the structure.

\begin{figure}[h]
	\centering
	\includegraphics[scale=0.35]{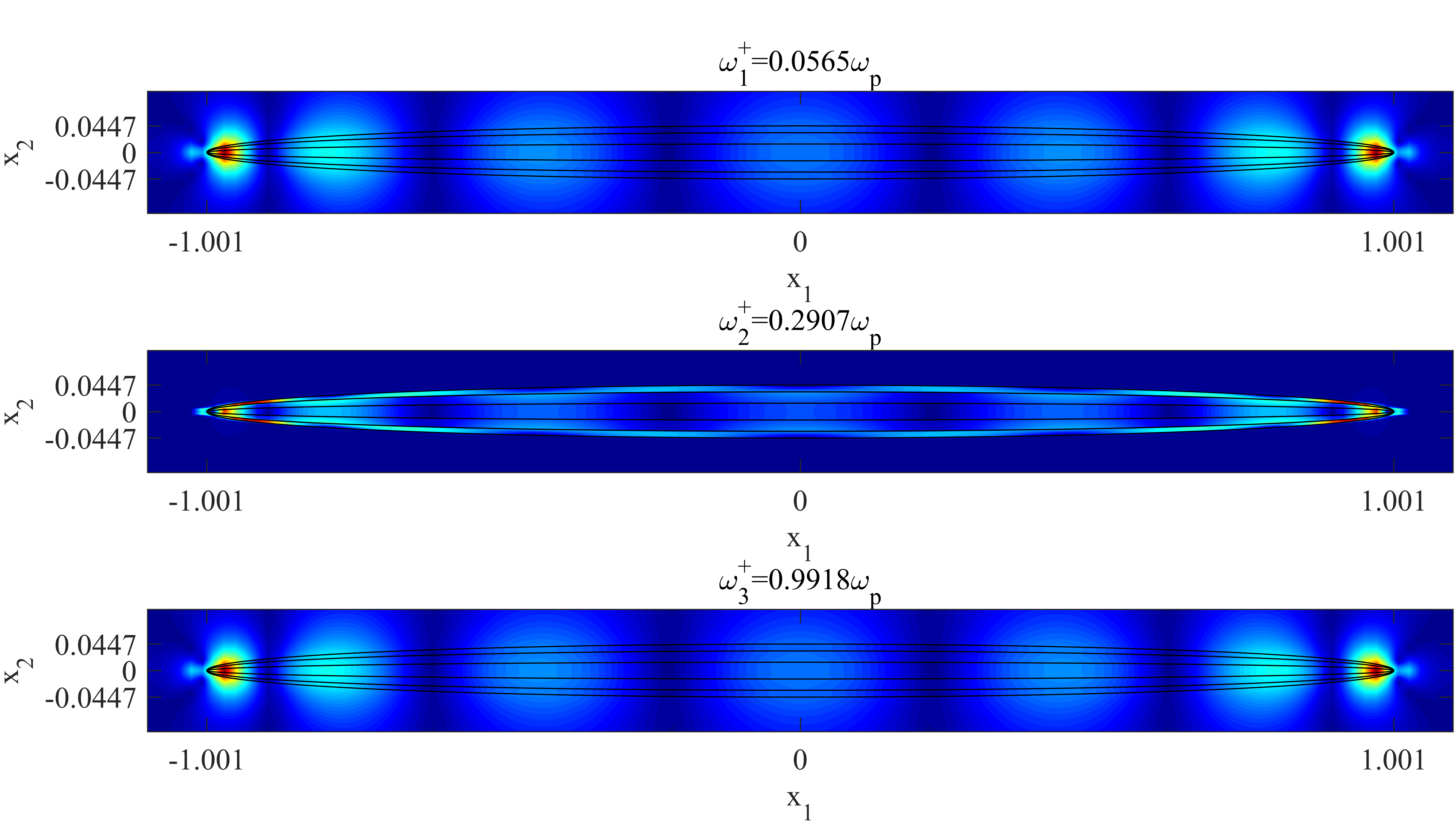}
	%\hspace{0.001in}
	\includegraphics[scale=0.35]{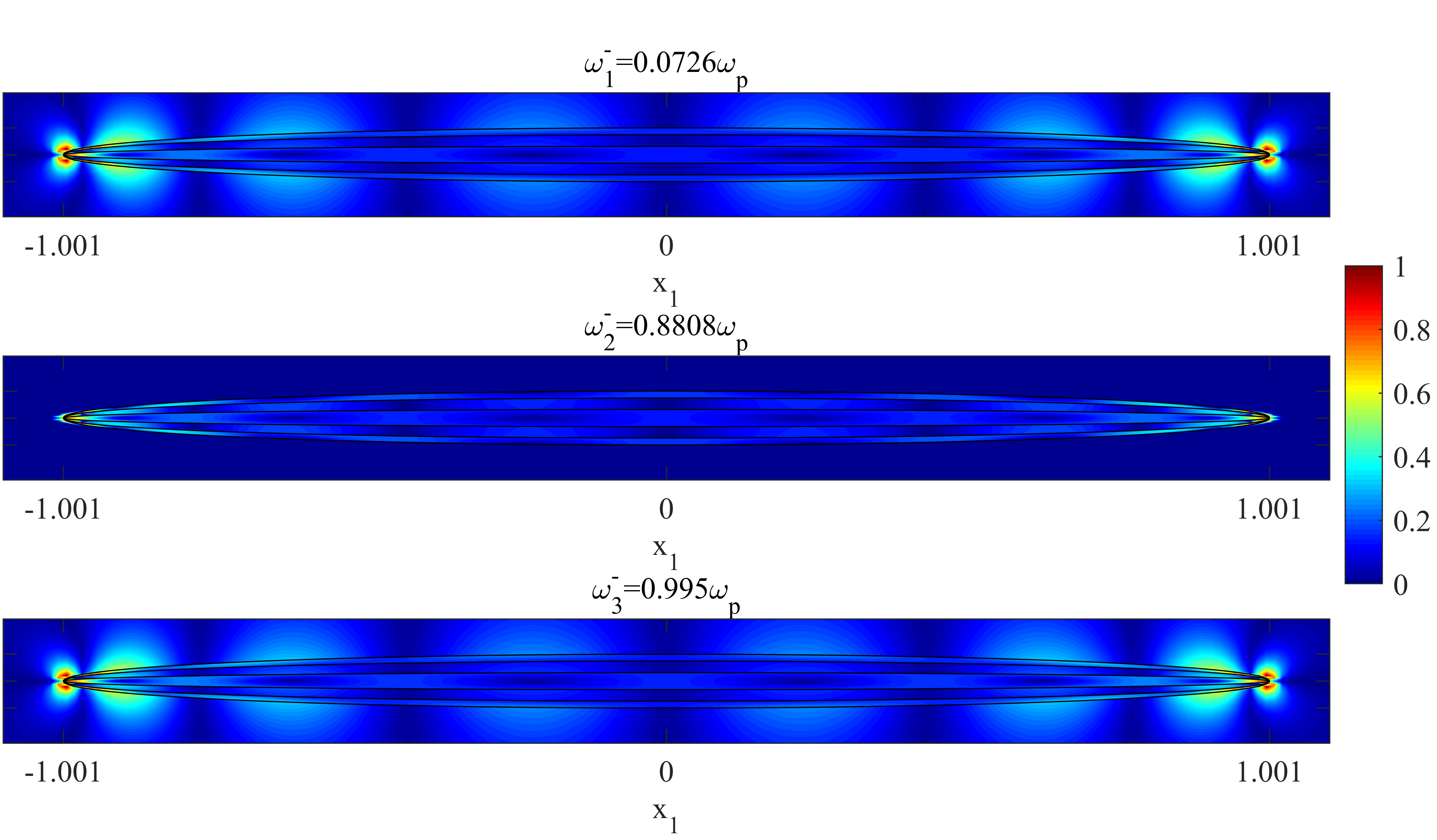}
%	\captionsetup{skip=0.1pt}
	\caption{ The real part of the perturbed gradient fields  $|\nabla u_1^{\pm}|$, $|\nabla u_2^{\pm}|$ and $|\nabla u_3^{\pm}|$ with $n=7$ for the three-layer confocal ellipses designed by \eqref{SMED} with $R = 1$, $b_N = 1.0001$, and $T = 0.00045$, represented as solid black lines. Each plot corresponds to one of the six plasmon resonance frequencies. The left plot displays the even,  the right plot the odd	plasmon modes.
	}\label{fig:12}
\end{figure}

\section{Concluding remarks}\label{sec8}
In this paper, we developed a comprehensive mathematical framework to study electrostatics within multi-layered structures. We use layer potentials and symmetrization techniques to derive a perturbation formula in terms of the NP operator applicable to general multi-layered structures. We have characterized the spectral properties of the NP operator, revealing that the number of plasmon modes increases with the number of layers. The Fourier series-based matrix representation provides precise insights into plasmon mode behavior in complex geometries, such as multi-layered confocal ellipses. Our framework facilitates a deeper understanding of how material truncation and rotational symmetry breaking influence plasmon mode splitting, paving the way for the development of tailored metamaterial applications with enhanced functionalities. The computations of this paper do not apply
to the dilation structure since different elliptic coordinates are required. We shall study this extension in our forthcoming works.

\section*{Data Availability Statement}
No data was used for the research described in the article.

\section*{Acknowledgment}
%The work of L. Kong was supported by the Fundamental Research Funds for the Central Universities of Central South University  (No. 2023ZZTS0354).
The work of Y. Deng was supported by NSFC-RGC Joint Research Grant No. 12161160314 and the Natural Science Foundation Innovation Research Team Project of Guangxi (Grant No. 2025GXNSFGA069001). The work of Z. Peng was supported by NNSF of China grant No.12461023.

\begin{thebibliography}{99}
	
\bibitem{Alu2008}
{\sc A. Al\`{u} and N. Engheta},
{\em Multifrequency optical invisibility cloak with layered plasmonic shells},
{\sl Phys. Rev. Lett.}, 72 (2008), 113901.

%\bibitem{ADAPRB2012}
%F.~Alpeggiani, S.~D'Agostino, and L.~C. Andreani.
%\newblock Surface plasmons and strong light-matter coupling in metallic
%nanoshells.
%\newblock {\em Phys. Rev. B}, 86(3):035421, 2012.

\bibitem{ACLJFA23}
{\sc H. Ammari, Y. Chow, and H. Liu},
{\em Quantum ergodicity and localization of plasmon resonances}, {\sl J. Funct. Anal.}, 285 (2023), 109976.

%\bibitem{ACLS_JEMS2025}
%{\sc H. Ammari, Y. Chow, H. Liu, and M. Sunkula},
%{\em Quantum integrable systems and concentration of plasmon resonance}, {\sl J. Eur. Math. Soc.},  27 (2025) 3407--3445.

\bibitem{ACKLM2}
{\sc H.~Ammari, G.~Ciraolo, H.~Kang, H.~Lee, and G.~W. Milton},
{\em Anomalous localized resonance using a folded geometry in three	dimensions},
{\sl  Proc. R. Soc. A}, 469 (2013), 20130048.

\bibitem{Ammari2013}
{\sc H.~Ammari, G.~Ciraolo, H.~Kang, H.~Lee, and G.~W. Milton}, {\em Spectral theory of a Neumann-Poincar{\'{e}}-type operator and analysis of cloaking due to anomalous localized resonance}, {\sl  Arch. Ration. Mech. Anal.}, 208 (2013), 667--692.

%\bibitem{AmmariPRSA29}
%H.~Ammari and B.~Davies.
%\newblock {A fully coupled subwavelength resonance approach to filtering
%	auditory signals}.
%\newblock {\em Proc. R. Soc. A}, 475(2228):20190049, 2019.

\bibitem{Ammari2016}
{\sc H.~Ammari, Y.~Deng, and P.~Millien}, {\em Surface plasmon resonance of nanoparticles and applications in	imaging}, {\sl Arch. Ration. Mech. Anal.}, 220 (2016), 109--153.

\bibitem{Ammari2007}
{\sc H. Ammari and H. Kang},
{\em Polarization and Moment Tensors with Applications to Inverse Problems and Effective Medium Theory}, (Springer-Verlag, New York, 2007).

%\bibitem{AmaariPRSA17}
%H.~Ammari, B.~Fitzpatrick, D.~Gontier, H.~Lee, and H.~Zhang.
%\newblock Sub-wavelength focusing of acoustic waves in bubbly media.
%\newblock {\em Proc. R. Soc. A}, 473(2208):20170469, 2017.

\bibitem{AKLJCM07}
{\sc H. Ammari, H. Kang, and H. Lee},
{\em A boundary integral method for computing elastic moment
tensors for ellipses and ellipsoids}, {\sl J. Comput. Math.}, 25 (2007),  2--12.

%\bibitem{AFLYZQAM19}
%H.~Ammari, B.~Fitzpatrick, H.~Lee, S.~Yu, and H.~Zhang.
%\newblock Double-negative acoustic metamaterials.
%\newblock {\em Quart. Appl. Math.}, 77(4):767--791, 2019.

\bibitem{AMRZARMA17}
{\sc H. Ammari, P. Millien, M. Ruiz, and H. Zhang},
{\em Mathematical analysis of plasmonic nanoparticles: The scalar case},
{\sl Arch. Ration. Mech. Anal.}, 224 (2017), 597--658.

\bibitem{AmmariJMA17}
{\sc H.~Ammari and H.~Zhang}.
{\em Effective medium theory for acoustic waves in bubbly fluids near minnaert resonant frequency},
{\sl SIAM J. Math. Anal.}, 49 (2017), 3252--3276.

\bibitem{AKKY18}
{\sc K. Ando, Y. Ji, H. Kang, K. Kim, and S. Yu},
{\em Spectral properties of the Neumann-Poincar\'e operator and cloaking by anomalous localized resonance for the elasto-static system},
{\sl European J. Appl. Math.,} 29 (2018), 189--225.

\bibitem{AKKY17}
{\sc K. Ando, Y. Ji, H. Kang, K. Kim, and S. Yu},
{\em Spectrum of Neumann-Poincar\'e operator on annuli and cloaking by anomalous localized resonance for linear elasticity},
{\sl SIAM J. Math. Anal.}, 49 (2017), 4232--4250.

\bibitem{Ando2016}
{\sc K.~Ando and H.~Kang},
{\em Analysis of plasmon resonance on smooth domains using spectral properties of the Neumann-Poincar{\'{e}} operator},
{\sl J. Math. Anal. Appl.}, 435 (2016), 162--178.

%\bibitem{ALMP_PRL10}
%A.~Aubry, D.~Y. Lei, S.~A. Maier, and J.~B. Pendry.
%\newblock Interaction between plasmonic nanoparticles revisited with
%transformation optics.
%\newblock {\em Phys. Rev. Lett.}, 105(23):233901, 2010.

%\bibitem{ALMPACS11}
%A.~Aubry, D.~Y. Lei, S.~A. Maier, and J.~B. Pendry.
%\newblock Plasmonic hybridization between nanowires and a metallic surface: A
%transformation optics approach.
%\newblock {\em ACS Nano}, 5(4):3293--3308,  2011.

%\bibitem{BMML_JPCC2010}
%{\sc R.~Bardhan, S.~Mukherjee, N.~A. Mirin, S.~D. Levit, P.~Nordlander, and N.~J.
%Halas},
%{\em Nanosphere-in-a-nanoshell: A simple nanomatryushka}.
%{\sl J. Phys. Chem. C}, 114 (2010), 7378--7383.

\bibitem{BLLW_ESAIM2020}
{\sc E. Bl\aa sten, H. Li, H. Liu, and Y. Wang}, {\em Localization and geometrization in plasmon resonances and geometric structures of Neumann-Poincar\'e eigenfunctions},
{\sl ESAIM Math. Model. Numer. Anal.}, 54 (2020), 957--976.

\bibitem{EBFTARMA13}
{\sc E. Bonnetier and F. Triki},
{\em On the spectrum of the Poincar\'e variational problem for two close-to-touching inclusions in 2D},
{\sl Arch. Ration. Mech. Anal.}, 209 (2013), 541--567.

\bibitem{BS11}
{\sc G.~Bouchitt\'e and B.~Schweizer},
{\em Cloaking of small objects by anomalous localized resonance},
{\sl Quart. J. Mech. Appl. Math.}, 63 (2010), 437--463.

\bibitem{CGSJLMS23}
{\sc X.~Cao, A.~Ghandriche, and M.~Sini},
{\em The electromagnetic waves generated by a cluster of nanoparticles with high refractive indices},
{\sl J. Lond. Math. Soc.}, 108 (2023), 1531--1616.

\bibitem{Chen2012}
{\sc P. Chen, J. Soric, and A. Al\`{u}},
{\em Invisibility and cloaking based on scattering cancellation}, {\sl Adv. Mater.}, 24 (2012), 281--304.

\bibitem{Chung2014}
{\sc D. Chung, H. Kang, K. Kim, and H. Lee},
{\em Cloaking due to anomalous localized resonance in plasmonic structures of confocal ellipses}, {\sl SIAM J. Appl. Math.}, 74 (2014), 1691--1707.

\bibitem{DKLZ24}
{\sc Y. Deng,  L. Kong, H. Liu, and L. Zhu},
{\em Elastostatics within multi-layer metamaterial structures and an algebraic framework for polariton resonances},
{\sl  ESAIM Math. Model. Numer. Anal.},  58 (2024), 1413--1440.

\bibitem{DLL201}
{\sc Y. Deng,  H. Li, and H. Liu},
{\em Analysis of surface polariton resonance for nanoparticles in elastic system},
{\sl SIAM J. Math. Anal.}, 52 (2020),  1786--1805.

%\bibitem{DFLMMS22}
%Y.~Deng, X.~Fang, and H.~Liu.
%\newblock Gradient estimates for electric fields with multiscale inclusions in
%the quasi-static regime.
%\newblock {\em SIAM Multiscale Model. Simul.}, 20(2):641--656, 2022.

\bibitem{DLbook2024}
{\sc Y. Deng and H. Liu}, \emph{Spectral Theory of Localized Resonances and Applications}, (Springer, Singapore, 2024).

\bibitem{DLZJMPA21}
{\sc Y. Deng, H. Liu, and G.-H. Zheng},
 {\em Mathematical analysis of plasmon resonances for curved nanorods},
 {\sl J. Math. Pure Appl.}, 153 (2021), 248--280.

\bibitem{DLZJDE22}
{\sc Y. Deng, H. Liu, and G.-H. Zheng},
{\em Plasmon resonances of nanorods in transverse electromagnetic scattering},
{\sl J. Differential Equations}, 318 (2022), 502--536.

\bibitem{FangdengMMA23}
{\sc X. Fang and Y. Deng}, {\em On plasmon modes in multi-layer structures}, {\sl Math. Methods Appl. Sci.}, 46 (2023), 18075--18095.

\bibitem{FDLMMA15}
{\sc X.~Fang, Y.~Deng, and J.~Li},
{\em Plasmon resonance and heat generation in nanostructures},
{\sl Math. Methods Appl. Sci.}, 38 (2015), 4663--4672.

%\bibitem{Gaponenko:10}
%S.~Gaponenko.
%\newblock {\em {Introduction to Nanophotonics}}.
%\newblock Cambridge University Press, Cambridge, 2010.

\bibitem{JKMA23}
{\sc Y.-G. Ji and H. Kang},
{\em Spectral properties of the Neumann-Poincar\'e operator on
rotationally symmetric domains}, {\sl Math. Ann.}, 387 (2023), 1105--1123.

%\bibitem{JPBCRWPRB72}
%P.~B. Johnson and R.~W. Christy.
%\newblock Optical constants of the noble metals.
%\newblock {\em Phys. Rev. B}, 6(12):4370--4379,  1972.

\bibitem{KKLSY_JLMS2016}
{\sc H. Kang, K. Kim,  H. Lee, J. Shin, and S. Yu},
{\em Spectral properties of the Neumann-Poincar\'e operator and uniformity of estimates for the conductivity equation with complex coefficients},
{\sl J. Lond. Math. Soc.},  93 (2016),  519--545.

\bibitem{KPSARMA2007}
{\sc D. Khavinson, M. Putinar, and H. S. Shapiro},
{\em Poincar\'e's variational problem in potential theory}, {\sl Arch. Ration. Mech. Anal.}, 185 (2007), 143--184.

\bibitem{KDZIP24}
{\sc L.~Kong, Y.~Deng, and L.~Zhu}, {\em Inverse conductivity problem with one measurement: uniqueness of multi-layer structures}, {\sl Inverse Problems}, 40 (2024), 085005.

\bibitem{KZDF}
{\sc L. Kong, L. Zhu, Y. Deng, and X. Fang}, {\em Enlargement of the localized resonant band gap by using multi-layer structures}, {\sl J. Comput. Phys.}, 518 (2024), 113308.

\bibitem{KPLM_PRB2014}
{\sc M. Kraft, J. B. Pendry, S. A. Maier, and Y. Luo}, {\em Transformation optics and hidden symmetries},
{\sl Phys. Rev. B}, 89 (2014), 245125.

\bibitem{KPN_NA13}
{\sc V.~Kulkarni, E.~Prodan, and P.~Nordlander},
{\em Quantum plasmonics: Optical properties of a nanomatryushka},
{\sl Nano Lett.}, 13 (2013), 5873--5879.

%\bibitem{LVAPL09}
%V.~Leroy, A.~Bretagne, M.~Fink, H.~Willaime, P.~Tabeling, and A.~Tourin.
%\newblock {Design and characterization of bubble phononic crystals}.
%\newblock {\em Appl. Phys. Lett.}, 95(17):171904,  2009.

\bibitem{LLLWESAIMM2AN19}
{\sc H.~Li, S.~Li, H.~Liu, and X.~Wang},
{\em Analysis of electromagnetic scattering from plasmonic inclusions
beyond the quasi-static approximation and applications},
{\sl ESAIM Math. Model. Numer. Anal.}, 53 (2019), 1351--1371.

\bibitem{LLPRSA18}
{\sc H.~Li and H.~Liu},
{\em On anomalous localized resonance and plasmonic cloaking beyond the
quasi-static limit},
{\sl Proc. R. Soc. A}, 474 (2018), 20180165.

\bibitem{LL16}
{\sc H. Li and H. Liu},
{\em On anomalous localized resonance for the elastostatic system},
{\sl SIAM J. Math. Anal.}, 48 (2016),  3322--3344.

\bibitem{LLL16}
{\sc H. Li, J. Li, and H. Liu},
{\em On novel elastic structures inducing polariton resonances with finite frequencies and cloaking due to anomalous localized resonance},
{\sl J. Math. Pures Appl.}, 120 (2018), 195--219.

%\bibitem{PhysRevE.70.055602}
%J.~Li and C.~T. Chan.
%\newblock Double-negative acoustic metamaterial.
%\newblock {\em Phys. Rev. E}, 70(5):055602,  2004.

\bibitem{Lim_IJM_01}
{\sc M. Lim}, {\em Symmetry of a boundary integral operator and a
characterization of balls}, {\sl Illinois J. Math.}, 45 (2001), 537--543.

%\bibitem{Zhang2008}
%{\sc S. Zhang, D. Genov, C. Sun, and X. Zhang},
%{\em Cloaking of Matter Waves}, {\sl Phys. Rev. Lett.}, 100 (2008), 123002.

\bibitem{Maier07}
{\sc S.~A. Maier},
{\em Plasmonics: Fundamentals and Applications}, (Springer, New York, 2007).

%\bibitem{MFZ2005}
%I.~D. Mayergoyz, D.~R. Fredkin, and Z.~Zhang.
%\newblock Electrostatic (plasmon) resonances in nanoparticles.
%\newblock {\em Phys. Rev. B}, 72(15):155412,  2005.

\bibitem{MGWNNAP_PRSA06}
{\sc G.~W. Milton and N.-A.~P. Nicorovici},
{\em On the cloaking effects associated with anomalous localized resonance},
{\sl Proc. R. Soc. A}, 2074 (2006), 3027--3059.

%\bibitem{MAJPCS08}
%A.~Moradi.
%\newblock Plasmon hybridization in metallic nanotubes.
%\newblock {\em J. Phys. Chem. Sol}, 69(11):2936--2938, 2008.

%\bibitem{Nedelec2001}
%J.-C. N\'ed\'elec.
%\newblock {\em {Acoustic and Electromagnetic Equations: Integral
%		Representations for Harmonic Problems}}.
%\newblock Springer, New York, 2001.

\bibitem{PNJCP2004}
{\sc E.~Prodan and P.~Nordlander},
{\em Plasmon hybridization in spherical nanoparticles},
{\sl J. Chem. Phys.}, 120 (2004), 5444--5454.

\bibitem{PRHN2003SCI}
{\sc E.~Prodan, C.~Radloff, N.~Halas, and P.~Nordlander},
{\em A hybridization model for the plasmon response of complex nanostructures},
{\sl Science}, 302(2003), 419--422.

\bibitem{RMSOPRB22}
{\sc M.~Ruiz and O.~Schnitzer},
{\em Plasmonic resonances of slender nanometallic rings},
{\sl Phys. Rev. B}, 105 (2022), 125412.

\bibitem{PMSOPRSA19}
{\sc M.~Ruiz and O.~Schnitzer},
{\em Slender-body theory for plasmonic resonance},
{\sl Proc. R. Soc. A}, 475 (2019), 20190294.

\bibitem{sarid_challener_2010}
{\sc D.~Sarid and W.~A. Challener},
{\em Modern Introduction to Surface Plasmons: Theory, Mathematica	Modeling, and Applications}, (Cambridge University Press, Cambridge, 2010).

%\bibitem{SKIEEE2011}
%G.~Sun and J.~B. Khurgin.
%\newblock Plasmon enhancement of luminescence by metal nanoparticles.
%\newblock {\em IEEE J. Sel. Topics Quantum Electron}, 17(1):110--118, 2011.

\bibitem{JPT2001}
{\sc J.-P. Tignol}.
 {\em {Galois' Theory of Algebraic Equations}},
(World Scientific Publishing, River Edge, NJ, 2001).

\bibitem{YA_SIAMREV18}
{\sc S.~Yu and H.~Ammari},
{\em Plasmonic interaction between nanospheres},
{\sl SIAM Review}, 60 (2018), 356--385.

\bibitem{YA_PNASU18}
{\sc S.~Yu and H.~Ammari},
{\em Hybridization of singular plasmons via transformation optics},
{\sl Proc. Natl. Acad. Sci. USA}, 116 (2019), 13785--13790.

\end {thebibliography}

\end{document}